\newtheorem{theorem}{Theorem}
\newtheorem{lemma}[theorem]{Lemma} 
\newtheorem{proposition}[theorem]{Proposition} 
\newtheorem{corollary}[theorem]{Corollary}
\theoremstyle{definition}
\newtheorem{definition}[theorem]{Definition} 
\newtheorem{example}[theorem]{Example} 
\newtheorem{remark}[theorem]{Remark}
\newcommand\set[2]{\{#1 \ : \ #2\}}
\newcommand{\C}{\mathbb{C}}
\newcommand{\Cs}{{\C^{*}}}
\newcommand{\R}{\mathbb{R}}
\newcommand{\Z}{\mathbb{Z}}
\newcommand{\PP}{\mathbb{P}}
\newcommand{\Q}{\mathbb{Q}}
\renewcommand{\L}{\mathcal{L}}
\newcommand{\rL}{\L^{-1}}
\newcommand{\Circuits}{\mathcal{C}}
\newcommand{\sR}{\mathcal{R}}
\newcommand{\ol}{\overline}
\renewcommand\b{\mathbf{b}}
\newcommand\x{\mathbf{x}}
\newcommand\z{\mathbf{z}}
\renewcommand\1{\mathbf{1}}
\newcommand\initForm{\mathrm{in}_{-\1}}
\newcommand{\Stir}[2]{\genfrac{\{}{\}}{0pt}{1}{#1}{#2}}
\DeclareMathOperator{\diag}{diag}
\DeclareMathOperator{\codim}{codim}
\DeclareMathOperator{\supp}{supp}
\DeclareMathOperator{\rk}{rk}
\DeclareMathOperator{\sing}{Sing}
\DeclareMathOperator{\rowspan}{rowspan}
\author{Raman Sanyal}
\address{Fachbereich Mathematik und Informatik, Freie Universit\"at Berlin, Germany}
\email{sanyal@math.fu-berlin.de}
\author{Bernd Sturmfels}
\address{Department of Mathematics, University of California, Berkeley, USA}
\email{bernd@math.berkeley.edu}
\author{Cynthia Vinzant}
\address{Department of Mathematics, University of Michigan, Ann Arbor, USA}
\email{vinzant@umich.edu}
\title{The Entropic Discriminant}
\begin{document}

\begin{abstract}
The entropic discriminant is a non-negative polynomial associated to a matrix.
It arises in contexts ranging from statistics and linear programming to
singularity theory and algebraic geometry. It describes the complex branch
locus of the polar map of a real hyperplane arrangement, and it vanishes when
the equations defining the analytic center of a linear program have a complex
double root. We study the geometry of the entropic discriminant, and we
express its degree in terms of the characteristic polynomial of the underlying
matroid. Singularities of reciprocal linear spaces play a key role.  In the
corank-one case, the entropic discriminant admits a sum of squares
representation derived from the discriminant of a characteristic polynomial of
a symmetric matrix.
\end{abstract}

\maketitle

\section{Introduction}\label{sec:intro}

Entropy maximization for log-linear models in statistics leads to the
optimization problem
\begin{equation} \label{eq:opti1} 
    {\rm maximize} \,\,\, | x_1 x_2 \cdots x_n |  \,\,\, {\rm subject}
    \,\,{\rm to} \,\,\, A \x = \b . 
\end{equation}
Here $A$ is a fixed real $d\times n$-matrix of rank $d$ none of whose columns
are zero.  The right hand side vector $\b \in \R^d$ is a parameter that is
allowed to vary.  The problem (\ref{eq:opti1}) has a unique local solution in
the interior of each bounded region of the hyperplane arrangement
$\{x_i=0\}_{i\in[n]}$ inside the $(n-d)$-dimensional affine space $\set{ \x
\in \R^n}{A\x=\b }$.  The bounded regions are $(n-d)$-dimensional convex
polytopes. The number of bounded regions in this arrangement is constant 
for an open, dense set of vectors~$\b$. 
This number, $\mu(A)$, is a quantity known in matroid theory as
the {\em M\"obius invariant}.  The local
optima of (\ref{eq:opti1}) are the {\em analytic centers} of these $\mu(A)$
polytopes.  They are characterized~by
\begin{equation}
\label{eq:opti2}
A \cdot \x = \b \quad \hbox{and} \quad
 \left(\frac{1}{x_1}, \frac{1}{x_2}, \ldots, \frac{1}{x_n}\right) \,\,\,
\hbox{lies in the row space of $A$}.
\end{equation}
This translates into a system of polynomial equations in the
variables $x_1,\ldots,x_n$. It is known \cite{Sot, varchenko} that all complex
solutions of this system actually lie in $\R^n$.  Thus $\mu(A)$ is the algebraic
degree of~(\ref{eq:opti2}).

The aim of this article is to address the following question: Under what
condition on the right hand side $\b$ do two of the $\mu(A)$ solutions of
polynomial equations represented by (\ref{eq:opti2}) come together?  The set
of all complex right hand side vectors $\b \in \C^d$ for which this happens is
an algebraic variety $H_A$ in $\C^d$, called the {\em entropic discriminant}.
Under mild hypotheses on the matrix $A$, the entropic discriminant $H_A$ is a
hypersurface and we identify it with its defining polynomial, denoted
$H_A(\b)$.  This is a non-negative polynomial whose real 
zeros lie in certain linear subspaces of codimension~$2$.

\begin{example} 
\label{ex:dreifunf}
Let $d=3$ and $n=5$. The following $3 {\times} 5$-matrix  has M\"obius 
invariant $\mu(A) = 4$:
$$
A = \begin{pmatrix}
      1 & 0 & 0 & 1 & 1 \\
      0 & 1 & 0 & 1 & 0 \\
      0 & 0 & 1 & 0 & 1 \\
\end{pmatrix}
$$
The entropic discriminant of $A$ is a homogeneous polynomial
$H_A(b_1,b_2,b_3)$ of degree $8$.  It equals
$$
\begin{smaller}
\begin{matrix}
288 b_2^2b_3^2
(b_1^2 b_2^2
+  b_1^2 b_3^2 
+  b_2^2 s_1^2  
+ b_2^2 s_2^2
+ b_2^2s_3^2 
+  b_3^2 s_1^2 
+ b_3^2  s_2^2 
+  b_3^2 s_3^2 )
+ 1773 b_2^4 b_3^4 
+  720 b_2^2 b_3^2 (s_1^2 s_2^2 
+  b_1^2 s_3^2)  
\\
+ 192( b_1^2 b_2^4s_1^2  
+  b_2^4 s_2^2 s_3^2 
+  b_1^2  b_3^4 s_2^2
+  b_3^4 s_1^2 s_3^2) 
+  1216 (b_1^2 b_2^2 b_3^2 s_1^2 
 +  b_1^2 b_2^2  b_3^2s_2^2 
+ b_2^2 b_3^2s_1^2  s_3^2 
+ b_2^2 s_2^2 s_3^2 b_3^2) + 256 b_1^2 s_1^2 s_2^2 s_3^2\\
+ 320 (b_1^2b_2^2 s_1^2s_2^2 
+ b_1^2b_2^2s_1^2  s_3^2
+  b_1^2b_2^2 s_2^2 s_3^2
+ b_1^2b_3^2s_1^2 s_2^2 
+ b_1^2b_3^2  s_1^2 s_3^2
+  b_1^2b_3^2 s_2^2 s_3^2 
+b_2^2s_1^2  s_2^2 s_3^2
+ b_3^2 s_1^2 s_2^2 s_3^2 ),
\end{matrix}
\end{smaller}
$$
where $s_1=b_1-b_2$, $s_2=b_1-b_3$, and $s_3=b_1-b_2-b_3$. 
Thus $H_A(\b)$ is a sum of squares of quartics.

\noindent It coincides with the discriminant of the following system of equations in three unknowns:
$$
\begin{matrix}
1/z_1 + 1/(z_1+z_2) + 1/(z_1+z_3) & = & b_1 ,\\
1/z_2 \,+\, 1/(z_1+z_2) & = & b_2 ,\\
1/z_3 \,+\, 1/(z_1+z_3) & = & b_3. 
\end{matrix}
$$
These equations are equivalent to (\ref{eq:opti2}) if we take
$(z_1,z_2,z_3)$ to be coordinates for the row space of $A$.
There are four solutions for any $\b = (b_1,b_2,b_3) \in \C^3$.
They are distinct  if and only if $H_A(\b) \not= 0$. 
The entropic discriminant $H_A(\b)$ is a non-negative polynomial
having precisely four real zeros:
\begin{equation}
\label{eq:finding}
 V_\R(H_A) \quad = \quad \bigl\{
(0:1:0),  \,\, (0:0:1), \,\, (1:1:0),\,\,(1:0:1) \bigr\} \quad \subset \quad \PP^2. 
\end{equation}
The complex variety $V_\C(H_A)$ is a curve of degree $8$
in the projective plane with coordinates $(b_1{:}b_2{:}b_3)$.
That curve is singular at its four real points. In addition, it has $16$ isolated complex singularities.\hfill$\diamond$
\end{example}

We shall study the systems (\ref{eq:opti2}) for arbitrary $d$, $n$, and $A$.
The following is our main result:

\begin{theorem} \label{thm:intro}
    Let $A$ be a real $d \times n$-matrix of rank $d$ whose columns span 
     $\geq d+1$ distinct lines. The entropic discriminant is a hypersurface,
    defined by a homogeneous polynomial $H_A(\b)$ of degree
   \begin{equation} \label{eq:deg}
        \deg\, H_A(\b) \ = \ \, 2 (-1)^d \cdot (d \chi(0) + \chi'(0)),
    \end{equation}
    where $\chi(t)$ is the characteristic polynomial of the rank $d$ matroid
    of $A$. 
    For generic matrices $A$, this degree equals  $\,2 (n-d)
    \binom{n-1}{d-2}$.
    The polynomial $H_A(\b)$ is non-negative for all arguments in  $ \R^d$.  
\end{theorem}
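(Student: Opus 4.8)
The plan is to establish the four assertions in three moves: recognise $H_A$ as the branch locus of a polar map, compute its degree by intersection theory governed by the matroid, and deduce non-negativity from the real-rootedness of the underlying critical equations. First I would rewrite \eqref{eq:opti2}. Let $\ell_i(\z)=\sum_k A_{ki}z_k$ be the linear form from the $i$-th column of $A$ and put $f(\z)=\sum_{i=1}^n\log\ell_i(\z)$. The substitution $x_i=1/\ell_i(\z)$ turns \eqref{eq:opti2} into $\nabla f(\z)=\b$, that is, into the fibres of the polar map $\nabla f\colon\PP^{d-1}\dashrightarrow\PP^{d-1}$; equivalently, $\z\mapsto(1/\ell_1(\z):\cdots:1/\ell_n(\z))$ parametrises the reciprocal linear space $\rL$ of $\L:=\rowspan(A)$, and \eqref{eq:opti2} is the family of fibres of the linear projection $\pi\colon\rL\to\C^d$, $\x\mapsto A\x$, namely the projection of $\rL\subseteq\PP^{n-1}$ from the centre $\PP(\L^{\perp})$, whose general fibre has $\mu(A)=(-1)^d\chi(0)$ points. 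Then $H_A$ is the branch locus of $\pi$ and $H_A(\b)$ its defining polynomial.

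For the degree I would first compute the ramification locus $R\subseteq\rL$, cut out by $\det\mathrm{Hess}\,f$. From $\mathrm{Hess}\,f(\z)=-A\,\diag(\ell_1^{-2},\dots,\ell_n^{-2})\,A^T$, the Cauchy--Binet formula yields $\big(\prod_i\ell_i^2\big)\det\mathrm{Hess}\,f=\pm\sum_{|S|=d}(\det A_S)^2\prod_{i\notin S}\ell_i^2$, a sum of squares of forms of degree $n-d$ in $\z$; hence $R$ is a hypersurface of degree $2(n-d)$ in the parametrising $\PP^{d-1}_{\z}$, and the hypothesis that the columns span $\ge d+1$ lines guarantees that $R$ is neither empty nor contracted by $\pi$, so $H_A$ is a hypersurface. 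After checking that the branch multiplicity is one (equivalently, $\pi$ maps $R$ birationally onto $H_A$), the projection formula gives $\deg H_A=\pi_*[R]\cdot h^{d-2}=[R]\cdot(H|_{\rL})^{d-2}=\deg_{\PP^{n-1}}R$, where $h$ and $H$ are the hyperplane classes on $\PP^{d-1}_{\b}$ and $\PP^{n-1}$. Pushing the divisor $R\subseteq\PP^{d-1}_{\z}$ forward along the degree-$(n-1)$ parametrisation produces the B\'ezout estimate $2(n-d)(n-1)^{d-2}$, from which one subtracts the base-locus corrections of the parametrisation, equivalently the excess of $\det\mathrm{Hess}\,f$ along $\sing\,\rL$. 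These corrections are indexed by the flats $F$ of the matroid and weighted by the M\"obius numbers $|\mu_M(\hat 0,F)|$; organising them through the matroid-theoretic description of $\rL$ and $\sing\,\rL$ --- where $\deg\rL=(-1)^d\chi(0)$ and the degree of the relevant part of $\sing\,\rL$ is $\sum_{\rk F=d-1}|\mu_M(\hat 0,F)|=(-1)^{d-1}\chi'(0)$ --- should collapse the count to $2(-1)^d\big(d\,\chi(0)+\chi'(0)\big)$, the pervasive factor $2$ tracing back to the squares in Cauchy--Binet. \textbf{The hard part will be this combinatorial bookkeeping: showing that every B\'ezout correction is absorbed exactly into $d\chi(0)+\chi'(0)$.} The generic statement then follows by specialising to the uniform matroid $U_{d,n}$, where $\chi(t)=\sum_{k=0}^{d-1}(-1)^k\binom nk t^{d-k}+(-1)^d\binom{n-1}{d-1}$, so $\chi(0)=(-1)^d\binom{n-1}{d-1}$ and $\chi'(0)=(-1)^{d-1}\binom n{d-1}$; substituting into \eqref{eq:deg} and simplifying with $\binom n{d-1}=\binom{n-1}{d-1}+\binom{n-1}{d-2}$ and $(d-1)\binom{n-1}{d-1}=(n-d+1)\binom{n-1}{d-2}$ yields $2(n-d)\binom{n-1}{d-2}$.

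Finally, for non-negativity I would fix $\b\in\R^d$ and use that on a dense open set $U\subseteq\R^d$ the equations $\nabla f(\z)=\b$ have $\mu(A)$ distinct solutions, all of them real: the function $f_{\b}(\z)=\sum_i\log\ell_i(\z)-\b\cdot\z$ has negative-definite Hessian $-A\,\diag(\ell_1^{-2},\dots,\ell_n^{-2})\,A^T$ wherever it is defined, hence it has exactly one critical point in each of the $\mu(A)$ bounded regions of the arrangement, and these exhaust the $\mu(A)$ complex critical points (as also follows from \cite{Sot, varchenko}). For $\b\in U$ the real coordinate ring $B_{\b}=\R[\z]_{\prod_i\ell_i}\big/\langle\nabla f(\z)-\b\rangle$ of the solution set is therefore isomorphic to $\R^{\mu(A)}$, so by Hermite's theorem its trace form is positive definite and its discriminant is positive; by the standard characterisation of discriminants this quantity equals $H_A(\b)$ up to a positive scalar. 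Hence $H_A>0$ on $U$, and since $U$ is dense, $H_A\ge 0$ on all of $\R^d$ by continuity.
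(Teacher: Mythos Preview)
Your overall architecture matches the paper's: recast \eqref{eq:opti2} as the fibres of the projection $\pi\colon\rL\to\PP^{d-1}$, identify $H_A$ with the push-forward of the ramification cycle, and extract the degree from matroid data. Two points deserve comment.

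\textbf{The degree computation.} Your proposed route---work in $\PP^{d-1}_{\z}$, take the Hessian hypersurface of degree $2(n-d)$, push it forward along the degree-$(n-1)$ parametrisation, and then subtract base-locus corrections indexed by \emph{all} flats of the arrangement---is harder than necessary, and you rightly flag the bookkeeping as the unfinished hard part. The paper works instead on $\rL\subset\PP^{n-1}$ directly. Your Cauchy--Binet expression, transported to $\x$-coordinates, is the degree-$2d$ polynomial
\[
g_A(\x)\;=\;\det\bigl(A\,\diag(\x)^2A^T\bigr)\;=\;\sum_{|I|=d}\det(A_I)^2\prod_{i\in I}x_i^2,
\]
and the scheme $\widehat{\sR}_A:=\rL\cap\{g_A=0\}$ has degree $2d\cdot\deg\rL=2d\,\mu(A)$ by B\'ezout. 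The point is that the excess of $\widehat{\sR}_A$ over the true ramification $\sR_A$ is supported \emph{only} on the corank-one boundary strata $\rL_J$ (hyperplane flats $J$), each with multiplicity~$2$, because $g_A$ vanishes to order exactly~$2$ along each $\PP^J$ and these strata are not in $\sR_A$. Hence
\[
\deg\sR_A\;=\;2d\,\mu(A)\;-\;2\!\!\sum_{J\in\mathrm{Hyp}(A)}\!\!\mu(A_J),
\]
and the two terms are $(-1)^d\,2d\,\chi(0)$ and $(-1)^{d-1}\,2\,\chi'(0)$ on the nose. No correction over lower-rank flats is needed. Since $\pi$ has no base points on $\rL$ (the short but essential lemma $\rL\cap\ker A=\{0\}$), the push-forward preserves degree and you are done. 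Your approach via $\PP^{d-1}_{\z}$ can presumably be made to work, but it trades one B\'ezout subtraction for an inclusion--exclusion over the whole lattice of flats.

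One correction: your parenthetical ``equivalently, $\pi$ maps $R$ birationally onto $H_A$'' is both unnecessary and false in general. The projection formula you invoke needs only properness, not birationality; and the paper's Example~\ref{ex:zweivier} (the $2\times4$ matrix at $a=6$) exhibits $H_A$ as a perfect square, so $R\to V(H_A)$ is two-to-one there. This is exactly why the paper \emph{defines} $H_A(\b)$ as the polynomial of the push-forward cycle (Definition~\ref{def:ED}) rather than of the set-theoretic image.

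\textbf{Non-negativity.} Your Hermite trace-form argument is different from the paper's and arguably cleaner. The paper multiplies the Hessian sum-of-squares over the $\mu(A)$ solutions, notes that the product is Galois-invariant and hence a rational function of $\b$, and reads off non-negativity from the sum-of-squares structure. Your route---all solutions of $\nabla f(\z)=\b$ are real and distinct for generic real $\b$, so the discriminant of the zero-dimensional fibre is positive on a dense open, hence $H_A\ge0$ by continuity---works as well, once you note that this fibre discriminant and $H_A(\b)$ agree up to a positive scalar on the locus where $\pi$ is unramified (both cut out the branch divisor of the same finite morphism).
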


The generic degree $2 (n-d) \binom{n-1}{d-2}$ is always an upper bound on the
degree of the entropic discriminant, and equality holds when the matroid of
$A$ is uniform; cf.~Proposition~\ref{prop:upperbound}. For example, for generic matrices $A$ of size $3 \times 5$,
the  degree of $H_A(\b)$ equals $16$, and not $8$ as in Example \ref{ex:dreifunf}.

This article is organized as follows.  In Section~\ref{sec:polar} we examine
the polar map of a product of linear forms.  The entropic discriminant is
shown to coincide with the branch locus of that polar map. For example,
consider the polar map of the binary form $f(z_1,z_2) =
z_1(z_1+2z_2)(z_1+3z_2)(z_1+az_2)$:
$$ \nabla_f \,:\, \PP^1 \,
\rightarrow
\, \PP^1 \, ,\,\,
(z_1:z_2) \,\mapsto \, \biggl(\frac{\partial f}{\partial z_1}(z_1,z_2):\frac{\partial f}{\partial z_2}(z_1,z_2)\biggl) .$$
The branch locus of this map consists of the four zeros of the binary quartic $H_A(b_1,b_2)$ in
Example~\ref{ex:zweivier} below.
This connects our study of $H_A(\b)$
to the topological theory of hyperplane arrangements
\cite{DP, Dol}, and to topics in classical algebraic geometry that are found
in Chapter 1 of Dolgachev's book~\cite{dolbook}.

Section~\ref{sec:n=d+1} is concerned with the important special case $n = d+1$.
Here the entropic discriminant has expected degree $d(d-1)$ and we
can write it explicitly as a sum of squares. This expression is derived
from known results on the discriminant of the characteristic polynomial
of a symmetric matrix \cite{bor46, ilyu92, lax98, newell}.
We then apply this to resolve two problems left open in the literature, namely 
the Sottile-Mukhin Conjecture \cite{AS} on the 
discriminant of the derivative of a univariate polynomial, and Conjecture 7.9 in \cite{stu02} 
concerning real critical double eigenvalues of a net of symmetric matrices.

For any linear subspace $\L$ of $ \C^n$, its {\em reciprocal} $\rL$ is defined as
the Zariski closure of the set 
\begin{equation}\label{eq:reciprocalplane}
  \biggl\{\, 
        \left( \tfrac{1}{u_1}, \tfrac{1}{u_2}, \ldots, \tfrac{1}{u_n}
    \right) \in \C^n \ : \ (u_1,u_2,\ldots,u_n) \in \L \cap (\C^*)^n \biggr\} . 
 \end{equation}
 In Section~\ref{sec:reciprocalplanes} we study the geometry of the {\em
 reciprocal plane} $\mathcal{L}^{-1}$, further extending the line of work
 from  Proudfoot-Speyer \cite{PS} to Huh-Katz~\cite{huh2}.  
 We identify a  minimal system of defining equations for $\mathcal{L}^{-1}$,
 we characterize the singular locus of $\mathcal{L}^{-1}$, and we determine all tangent cones.
 The relationship between that singular locus,
 the ramification locus of the map $A : \mathcal{L}^{-1} \rightarrow
 \PP^{d-1}$,  and the entropic discriminant $H_A(\b)$ is
 studied in detail  in Section~\ref{sec:hopefully}. 
In  Corollary \ref{cor:reallylast} we show
  that the real variety defined
 by the polynomial $H_A(\b)$ is a union of
 linear spaces of codimension $2$ in $\PP^{d-1}$.
 We saw this already for one instance in Example \ref{ex:dreifunf},
 where $d=3$ and the real variety is finite.

Theorem~\ref{thm:intro} is proved in Section~\ref{sec:ramification}. 
However, one subtle but essential point needs to be taken care of before that proof.
In order for  (\ref{eq:deg}) to be the correct degree, 
a more refined notion of entropic discriminant is required. 
Namely, we shall define $H_A({\bf b})$ as the polynomial defining
the cycle-theoretic branch locus of  the restriction to
 $\mathcal{L}^{-1}$ of the linear map $A: \C^n \rightarrow \C^d$, where
 $\mathcal{L}$ is the row space of $A$.   The following example justifies
 this ``fine print'' in Definition \ref{def:ED}.
 
\begin{example} 
\label{ex:zweivier}
Let $d = 2$, $n=4$ and  $A = \begin{pmatrix}
1 & 1 & 1 & 1 \\
0 & 2 & 3 & a \end{pmatrix} $ where $a$ is a real parameter.
For general values of $a$, the entropic discriminant is irreducible
and has degree $4$, as predicted by Theorem  \ref{thm:intro}:
$$
\begin{smaller}
\begin{matrix}
\!\! H_A(b_1,b_2) \quad = \quad
(2268 a^4-9720 a^3+11664 a^2) b_1^4
-(3000 a^4-12528 a^3+12960 a^2+5184 a) b_1^3 b_2   \qquad \\ 
+\,(1744 a^4-7980 a^3+10584 a^2-2160 a+5184) b_1^2 b_2^2 \\  \quad \qquad \qquad
-(500 a^4-2612 a^3+4680 a^2-3888 a+4320) b_1 b_2^3 
+(63 a^4-400 a^3+999 a^2-1350 a+1188) b_2^4.
\end{matrix}
\end{smaller}
$$
For special values of the parameter $a$, this expression factors over $\Q$.
For $a = 6$, it is the square
$\,972 (36 b_1^2 - 24 b_1 b_2 + 5 b_2^2)^2$. Thus, here the four
points of $V_\C(H_A) $ in $ \PP^1$ 
are two double points. \hfill$\diamond$
\end{example}

Our initial motivation for embarking on this project was a model in
theoretical neuroscience proposed by Hillar and Wibisono \cite{HilWib}. These authors
investigate the {\em retina equations} which characterize the maximum entropy
distribution for a graphical model $G$ with $n$ edges having continuous random
variables  on $d$ nodes that represent the firing pattern of $d$ neurons.
Their equations are
\begin{equation} \label{eq:sys1}
    \sum_{j \in \mathcal{N}(i)} \frac{1}{z_i + z_j} \,\, = \,\, b_{i} 
    \quad \hbox{for} \,\, i = 1,2,\ldots,d,
\end{equation} 
where $\mathcal{N}(i)$ is the set of all nodes that are adjacent to the node
$i$. The real numbers $b_1,b_2,\ldots,b_d$ are parameters that serve as the
sufficient statistics of the desired maximum entropy distribution.
 
To fit the system (\ref{eq:sys1}) into our framework, we introduce new
unknowns $x_{ij} = 1/(z_i+z_j)$ for all edges $\{i,j\} \in E(G)$.  This
translates  (\ref{eq:sys1}) into the linear
system $\, A \cdot \x \, = \, \b $, where $A$ is the
node-edge incidence matrix of $G$ and $\x= \bigl( x_{ij} :\{i,j\} \in E(G) \bigr)$ is a
column vector of unknowns. Of course, these unknowns obey the additional
constraints that $\x$ must lie in the reciprocal plane $\rL$, where
$\L$ is the row space of $A$.  Thus the retina equations of Hillar
and Wibisono fit our format (\ref{eq:opti2}):
\begin{equation} \label{eq:sys5}
    A  \cdot \x \, = \, \b 
    \quad \hbox{and} \quad \x \in \rL.
\end{equation}
The entropic discriminant $H_A(\b)$  characterizes measurements $\b$ for
which the retina equations (\ref{eq:sys1}) or  (\ref{eq:sys5}) have multiple
roots.  Of particular interest is the case $n = \binom{d}{2}$, when $G = K_d$
is the complete graph, and the sum in (\ref{eq:sys1}) is over $j \in \{1,\ldots,n\}\backslash \{i\}$.
The characteristic polynomial $\chi_d(t)$ of the corresponding matroid
was computed by Zaslavsky  \cite{zaslavsky82b}, in his work
 of colorings of signed graphs:
\begin{equation}
\label{eq:zaslavsky}
    \chi_d(t) \quad = \quad \sum_{k = 0}^d \bigl( \Stir{d}{k} +
    d\,\Stir{d-1}{k}\bigr)\,(t-1)^{(2)}_k.
\end{equation}
    Here $\Stir{d}{k}$ is the Stirling number of the second kind and
    $(x)^{(2)}_{k+1} = x(x-2)\cdots(x-2k)$ is the generalized falling
    factorial. One can also compute $\chi_d(t)$ with the exponential generating 
    function
\begin{equation}
\label{eq:stanley}
\sum_{d \geq 0} \chi_{d}(t) \cdot \frac{x^d}{d !}  \quad = \quad
(1+x) \cdot \bigl( 2 \cdot {\rm exp}(x) - 1 \bigr)^{(t-1)/2}, 
\end{equation}
found in \cite[Exercise 5.25]{Sta}.     
 Using these formulas, one obtains the
 first few values of the degree of $H_A({\bf b})$ and of the number of solutions of
 the retina equations on the complete graph $G = K_d$:
\begin{equation}
\label{eqn:stanley3}
\begin{matrix}
d \,\,\,\, & = \quad &  4 & 5 & 6 & 7 & 8 & 9 & 10 \\
{\rm deg}(H_A(\b))
 \,\, & = \quad &
 22 &
 270 &
 3148 &
 38990 &
 524858 &
 7705572 &
 123087958  \\
 \mu(A) & = \quad & 
7 &  51 & 431 & 4208 & 46824 & 586141 &  8161237
   \end{matrix} 
\end{equation}
The requisite combinatorics is developed in
Section~\ref{sec:matroids}. It covers material from matroid theory, focusing
on geometric interpretations of the characteristic polynomial and the M\"obius invariant. 
For instance, the third row in (\ref{eqn:stanley3}) is computed
from the series in (\ref{eq:stanley}) for $t = 0$, using formula (\ref{eq:unsigned}).


\section{The polar  map of a product of linear forms}
\label{sec:polar}

The $d \times n$-matrix $A = (a_{ij})$ determines a product of linear forms
in $d$ unknowns $\z = (z_1,\ldots,z_d)$:
\begin{equation}
\label{eq:polf}
 f(\z) \quad = \quad \prod_{j=1}^n \bigl( \,\sum_{i=1}^d a_{ij} z_i  \bigr) .
 \end{equation}
The hypersurface $V_\C(f)$ is an arrangement of $n$ hyperplanes
in  the complex projective space $\PP^{d-1}$. 
The {\em polar  map} of this hypersurface is the rational map
$$ \nabla_f \,:\,\PP^{d-1} \dashrightarrow \PP^{d-1} \,,\,\,\,
\z \,\mapsto \,  \left( \frac{\partial f}{\partial z_1}(\z):
\frac{\partial f}{\partial z_2}(\z): \cdots :
\frac{\partial f}{\partial z_d}(\z) \right) . $$
The base locus of
 $\nabla_f$ is the singular locus of $V_\C(f)$,
and this is the union of all  codimension-$2$ strata
in the hyperplane arrangement. If the columns of $A$ are
linearly independent then $\nabla_f$ is the
Cremona transformation of classical algebraic geometry,
and, in general, the polar  map $\nabla_f$ is also known as the
{\em polar Cremona transformation} \cite{Dol}.
The Jacobian of $\nabla_f$ is the Hessian of the polynomial $f$,
that is, the symmetric matrix of second derivatives.
We consider its determinant
$$ {\rm Hess} (f) \quad = \quad
{\rm det} \left(
\frac{\partial^2 f}{\partial z_i \partial z_j}
\right)_{\! 1 \leq i, j \leq d}.
$$
This is a homogeneous polynomial of degree $d(n-2)$.
Its zero set in $\PP^{d-1}$, denoted by $\,V_\C( {\rm Hess}(f))$,
is also referred to as the Hessian of $f$.
We are interested in the image of that hypersurface under~$\nabla_f$.

\begin{proposition} \label{prop:hess}
    The entropic discriminant equals the image of the Hessian under the
    polar  map:
    \begin{equation}
\label{eq:itf1}
     V_{\C}(H_A) \,\,\, = \,\,\, \hbox{closure of} \ \,\nabla_f \bigl( \,V_\C( {\rm
    Hess}(f)) \setminus V_\C(f) \, \bigr) . 
    \end{equation}
\end{proposition}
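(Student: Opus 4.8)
The plan is to relate the entropic discriminant, defined as the branch locus of the linear projection $A \colon \rL \to \PP^{d-1}$, to the polar map $\nabla_f$ via the classical fact that $\nabla_f$ is essentially this projection in different coordinates. Concretely, write $\ell_j(\z) = \sum_i a_{ij} z_i$ for the linear forms cut out by the columns of $A$, so that $f = \prod_j \ell_j$ and $\frac{\partial f}{\partial z_i} = \sum_j a_{ij} \cdot \frac{f}{\ell_j}$. Thus on the open set where all $\ell_j \neq 0$, the polar map factors as
\begin{equation}
\label{eq:factor}
\z \ \xmapsto{\ \varphi\ }\ \left(\tfrac{1}{\ell_1(\z)} : \cdots : \tfrac{1}{\ell_n(\z)}\right) \ \xmapsto{\ A\ }\ \left(\textstyle\sum_j a_{1j}\tfrac{1}{\ell_j} : \cdots : \sum_j a_{dj}\tfrac{1}{\ell_j}\right),
\end{equation}
where $\varphi$ maps $\PP^{d-1}$ birationally onto the reciprocal plane $\rL$ of the row space $\L$ of $A$ (this is essentially the definition of $\rL$; note $(\ell_1(\z),\dots,\ell_n(\z)) \in \L$ since $\L$ is the row space). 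So $\nabla_f = A \circ \varphi$ as rational maps, and the branch locus of $\nabla_f$ is the image under $\varphi$ of the ramification divisor, pushed into $\PP^{d-1}$ by $A$.

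The key computation is then to identify the ramification divisor of $\nabla_f$. Since $\varphi$ is birational, the ramification of $\nabla_f = A\circ\varphi$ away from the base locus coincides with the ramification of $\nabla_f$ itself, whose branch locus is the image of the critical locus. The critical locus of $\nabla_f$ is precisely where its Jacobian determinant vanishes, and the Jacobian of $\nabla_f$ is exactly $\mathrm{Hess}(f)$ by definition. Hence the branch locus of $\nabla_f$ is the closure of $\nabla_f(V_\C(\mathrm{Hess}(f)))$, and after removing the arrangement $V_\C(f)$ — where $\varphi$ is undefined and where $\mathrm{Hess}(f)$ may vanish for spurious reasons along the hyperplanes — we get exactly the right-hand side of \eqref{eq:itf1}. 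Matching this against the definition of $H_A(\b)$ as the branch locus of $A|_{\rL}$ requires checking that the critical points of $A|_{\rL}$ correspond under $\varphi^{-1}$ to the critical points of $\nabla_f$ outside $V_\C(f)$; since $\varphi$ is an isomorphism between dense opens of $\PP^{d-1}$ and $\rL$, a point of $\rL$ is a smooth critical point of $A|_{\rL}$ if and only if its $\varphi$-preimage is a critical point of $A\circ\varphi = \nabla_f$, which by the chain rule (and $\varphi$ being a local isomorphism there) happens exactly when $\mathrm{Hess}(f)$ vanishes.

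I expect the main obstacle to be the bookkeeping at the boundary — the locus $V_\C(f)$ and the base loci of $\varphi$ and $\nabla_f$ — and in particular showing that nothing is lost or gained by taking closures. One must argue that $\varphi$ extends to an isomorphism from $\PP^{d-1}\setminus V_\C(f)$ (or a large enough open subset) onto the smooth locus of $\rL$ intersected with the torus $(\C^*)^n$, so that critical loci match up set-theoretically and the images of the two candidate branch loci agree on a dense open set and hence have the same closure. A secondary subtlety is that $\mathrm{Hess}(f)$ could in principle contain components supported on $V_\C(f)$ or its singular locus (the base locus of $\nabla_f$), which is exactly why the statement removes $V_\C(f)$; one should note that such components do not contribute to the branch divisor of the projection $A|_{\rL}$ and so their removal is harmless. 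Once these boundary issues are dispatched, the proposition follows from the factorization \eqref{eq:factor} together with the chain rule identifying $\mathrm{Jac}(\nabla_f) = \mathrm{Hess}(f)$.
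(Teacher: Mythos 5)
Your proposal is correct and takes essentially the same approach as the paper: both factor $\nabla_f = A\circ\varphi$ where $\varphi(\z)=(1/\ell_1(\z):\cdots:1/\ell_n(\z))$ parametrizes $\rL$ birationally, identify the critical locus of $\nabla_f$ with $V_\C(\mathrm{Hess}(f))$, and transport that locus across the birational map $\varphi$ to match the branch locus of $A|_{\rL}$ away from $V_\C(f)$, taking closures at the end.
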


\begin{proof}
Let $\mathcal{L}^{-1}$ denote the reciprocal of the subspace $\mathcal{L}$
spanned by the rows of $A$, regarded as a subvariety of $\PP^{n-1}$. The
variety $\mathcal{L}^{-1}$ is the closure of the image of the map $\PP^{d-1} \dashrightarrow
\PP^{n-1}$ that takes a general point $\z = (z_1: \cdots : z_d)$ in
$\PP^{d-1}$ to $\, (\z A)^{-1} = \bigl( (\sum_{i=1}^d a_{i1} z_i)^{-1} :
\cdots: ( \sum_{i=1}^d a_{in} z_i)^{-1} \bigr)\,$ in $\PP^{n-1}$.  The
polar  map is the composition of this map with the linear projection
$\PP^{n-1} \dashrightarrow \PP^{d-1}, \,\x \mapsto A \x$.  In
symbols, we have $\, \nabla_f(\z) = A \bigl(  (\z A)^{-1} \bigr) $.
This observation shows that the fiber of $\nabla_f$ over a general real point
$\b \in {\rm Im}(\nabla_f)$ consists of $\mu(A)$ real points in $\PP^{d-1}$, namely,
the points represented by the analytic centers in the arrangement defined by
the coordinate hyperplanes in the affine space $\,\{ \x \in \R^n \,: \, A
\x = \b\}$.  This result was also obtained by Dimca and Papadima in
\cite[Corollary 4 (1)]{DP}.

For special complex points $\b \in \PP^{d-1}$, two of its $\mu(A)$ 
preimages under $\nabla_f$ may coincide.  At such a preimage $\z$ of
multiplicity $\geq 2$, the Jacobian of $\nabla_f$ drops rank, and 
the Hessian of $f$ vanishes at $\z$. Conversely, points $\z$ outside the
hyperplane arrangement $V_\C(f)$ at which the polynomial ${\rm Hess}(f)$
vanishes must be double roots of the system of equations $\nabla_f( \z) =
\b$.  Since the parametrization $\z \mapsto \x = (\z
A)^{-1}$ maps $\PP^{d-1}$ birationally onto the reciprocal plane
$\mathcal{L}^{-1}$, such double roots appear if and only the intersection
$\,\mathcal{L}^{-1} \,\cap \, \{\x \in \PP^{n-1}  : A \x= \b, \; x_1 x_2 \cdots x_n \neq0\}
\,$ has a point of multiplicity $\geq 2$. This condition on $\b$ is
the geometric definition of the entropic discriminant $H_A$.
\end{proof}

We have not yet addressed the question whether the entropic
discriminant actually has codimension $1$, and this may in fact not  be the
case.  For instance, if $A$ is the identity matrix and $f= z_1 z_2 \cdots z_d$
then $\nabla_f$ is the classical Cremona transformation on $\PP^{d-1}$ and $\,
{\rm Hess}(f) = (-1)^{d-1} (d-1) f^{d-2} $.  Here, the Hessian coincides with
the hyperplane arrangement, and the entropic discriminant is not a
hypersurface. We shall see that this is essentially the only exceptional~case.

The matrix $A = (a_{ij})$ is called {\em basic} if its column
rays lie on $d$ distinct lines in $\R^d$.  Since $A$ has
rank $d$ and no zero columns, this means that the distinct
column directions form a basis of $\R^d$. 

\begin{corollary} \label{cor:basic}
If $A$ is not basic then the entropic discriminant is a hypersurface in $\PP^{d-1}$.
\end{corollary}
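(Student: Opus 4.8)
The plan is to read off the statement from Proposition~\ref{prop:hess} together with a lower bound on the degree of the polar map $\nabla_f$. Note first that if $A$ is not basic then automatically $d\geq 2$ and $n\geq d+1$. By Proposition~\ref{prop:hess}, $V_\C(H_A)$ is the closure of $\nabla_f\bigl(V_\C({\rm Hess}(f))\setminus V_\C(f)\bigr)$, so $\dim V_\C(H_A)\le\dim V_\C({\rm Hess}(f))$; since $\nabla_f$ is dominant (its general fibre has $\mu(A)\geq 1$ points, as Rota's theorem gives $\mu(A)\geq1$ for the loopless matroid of $A$), the Jacobian ${\rm Hess}(f)$ of $\nabla_f$ is not identically zero, so $\dim V_\C(H_A)\le d-2$. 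It then remains to prove the reverse inequality, i.e.\ that the branch locus of the dominant, generically finite rational map $\nabla_f\colon\PP^{d-1}\dashrightarrow\PP^{d-1}$ has codimension exactly one.

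The crucial point is that this follows once $\deg\nabla_f\geq 2$, and by the proof of Proposition~\ref{prop:hess} one has $\deg\nabla_f=\mu(A)$. So the next step is the combinatorial claim: \emph{if $A$ is not basic, then $\mu(A)\geq 2$.} Write $M$ for the matroid of $A$ and $\bar M$ for its simplification, so that $\mu(A)=(-1)^d\chi_M(0)=(-1)^d\chi_{\bar M}(0)$. Recall that $A$ is basic exactly when its $d$ distinct column directions form a basis of $\R^d$, equivalently when $\bar M=B_d$, equivalently (comparing the coefficient of $t^{d-1}$) when $\chi_{\bar M}(t)=(t-1)^d$. If $A$ is not basic, then $\bar M$ has more than $d$ points, so not all of them are coloops, and we may fix a point $p$ of $\bar M$ that is neither a loop nor a coloop. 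Deletion--contraction gives $\chi_{\bar M}(t)=\chi_{\bar M\setminus p}(t)-\chi_{\bar M/p}(t)$, where $\bar M\setminus p$ is loopless of rank $d$ and $\bar M/p$ is loopless of rank $d-1$; evaluating at $t=0$ yields $(-1)^d\mu(A)=(-1)^d m_1-(-1)^{d-1}m_2=(-1)^d(m_1+m_2)$ with $m_1,m_2\geq 1$ by Rota's theorem. Hence $\mu(A)=m_1+m_2\geq 2$.

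Finally, knowing $\deg\nabla_f=\mu(A)\geq 2$, the map $\nabla_f$ is not birational, so it cannot be unramified in codimension one: a dominant rational self-map of $\PP^{d-1}$ that is \'etale outside a set of codimension $\geq 2$ would restrict to a finite \'etale cover of $\PP^{d-1}$ minus a codimension-$\geq 2$ set, which — $\PP^{d-1}$ being simply connected, and by Zariski--Nagata purity of the branch locus — must be trivial, contradicting $\deg\nabla_f\geq 2$. Therefore the ramification divisor $V_\C({\rm Hess}(f))$ maps to a nonempty set $B$ of pure codimension one; one then checks (see below) that $B\subseteq V_\C(H_A)$, whence $\dim V_\C(H_A)=d-2$ and $V_\C(H_A)$ is a hypersurface in $\PP^{d-1}$.

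The hard part will be making this last step precise despite the fact that $\nabla_f$ is merely a rational map, with indeterminacy along the codimension-$2$ set $\sing V_\C(f)$: after resolving $\nabla_f$ to a morphism $\widetilde\phi\colon X\to\PP^{d-1}$ one must rule out that the codimension-one branch locus produced above is concentrated in the image of the exceptional divisor $E$ over $\sing V_\C(f)$, which can itself be a hypersurface (for $d=3$ it is the line through a pair of columns of $A$). The way around this is to note that $\widetilde\phi$ is, along each fibre of $E$, the projectivization of a linear map — hence generically unramified onto the image of $E$ — and that $V_\C(f)\setminus\sing V_\C(f)$ is contracted by $\nabla_f$ onto the finite set of columns of $A$; granting these, every codimension-one component of the branch locus of $\widetilde\phi$ genuinely lies on $V_\C(H_A)$, and the argument is complete.
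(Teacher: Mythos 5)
Your proposal takes a genuinely different route from the paper's, and it has a real gap at the crucial final step.

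The paper's proof is direct: it invokes the classical factorization
$\mathrm{Hess}(f) = (-1)^{d-1}(n-1) f^{d-2}\sum_I \det(A_I)^2 \prod_{k\notin I}\ell_k^2$
(formula (\ref{eq:hessianSOS})), observes that when $A$ is non-basic the sum has two
non-proportional terms so that $V_\C(\mathrm{Hess}(f)) \not\subset V_\C(f)$, and then uses that
$\nabla_f$ is a finite-to-one \emph{morphism} on $\PP^{d-1}\setminus V_\C(f)$ to conclude the image of
$V_\C(\mathrm{Hess}(f))\setminus V_\C(f)$ has pure codimension one. No resolution of indeterminacy,
no purity theorem, no topology. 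You instead prove $\mu(A)\geq 2$ by deletion--contraction
(that part is correct and is essentially Proposition~\ref{rmk:MuOne} of the paper) and then
argue topologically: a degree-$\geq 2$ finite cover of $\PP^{d-1}$ minus a codimension-$\geq 2$
set cannot be \'etale, by Zariski--Nagata purity and simple connectedness. That argument is sound as far
as it goes, once one is careful that after resolving $\nabla_f$ to a projective morphism
$\widetilde\phi\colon X\to\PP^{d-1}$ and removing the codimension-$\geq 2$ image of the
positive-dimensional fibres one lands in the finite-flat setting where purity applies.

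The gap is precisely the last paragraph, and you flag it yourself. Purity only tells you the
branch locus $B$ of $\widetilde\phi$ is nonempty of pure codimension one; it does not tell you
that any codimension-one component of $B$ lies in $V_\C(H_A)$ rather than in
$\widetilde\phi(E)$. You assert that ``$\widetilde\phi$ is, along each fibre of $E$, the
projectivization of a linear map --- hence generically unramified onto the image of $E$,'' but
this is not true as stated: over a point of $\PP^2$ where $k\geq 3$ of the lines of the
arrangement meet, the induced map on the exceptional $\PP^1$ is given by
$s\mapsto(\bar F_u(s):\bar F_v(s))$ for $F=\prod_{j=1}^k\ell_j$, a map of degree $k-1\geq 2$,
which \emph{is} ramified. (A local computation shows its ramification points are isolated, so
$E$ is still not a component of the ramification divisor, but this is a nontrivial calculation
that you have neither stated nor done, and in higher $d$ the geometry of $E$ over nested flats
is considerably more involved.) Without ruling out that the entire codimension-one part of $B$
comes from $\widetilde\phi(E)$, the purity argument does not yield the conclusion. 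The paper's
sum-of-squares identity sidesteps the entire issue by producing the codimension-one ramification
component \emph{inside} the torus from the start.
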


\begin{proof}
A classical formula \cite[p.~660, Ex.~10]{muir}  for the Hessian determinant
of $f$ states that
\begin{equation} \label{eq:hessianSOS}
    \mathrm{Hess}(f) \quad = \quad  (-1)^{d-1} (n-1) f^{d-2} \cdot \sum_{I \in \binom{[n]}{d}} \!
    \det(A_I)^2 \prod_{k \in [n] \backslash I} (a_{1k} z_1 + a_{2k} z_2 + \cdots + a_{dk} z_d)^2
\end{equation}
where $A_I$ denotes the $d {\times} d$-submatrix of $A$ with column indices
$I$. 
If $A$ is not basic, then at least two summands are not scalar multiples of each other. 
This implies that the Hessian hypersurface is not contained in the hyperplane
arrangement $V_\C(f)$.  The polar  map $\nabla_f$ is a finite-to-one
morphism on the open set $\PP^{d-1} \backslash V_\C(f)$, and hence it
maps the Hessian to a hypersurface in $\PP^{d-1}$, namely $H_A$.
\end{proof}

\begin{corollary} \label{cor:nonnegative}
For any non-basic $A$,
the polynomial $H_A(\b)$ is homogeneous and nonnegative on~$\R^d$.
\end{corollary}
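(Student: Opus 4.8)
The plan is to exploit the description of $H_A(\b)$ as the image of the Hessian hypersurface under the polar map, combined with the factored form \eqref{eq:hessianSOS} of $\mathrm{Hess}(f)$. First I would set aside the factor $f^{d-2}$, which vanishes on the hyperplane arrangement we are removing in \eqref{eq:itf1}; so on the relevant open set $V_\C(\mathrm{Hess}(f)) \setminus V_\C(f)$ the Hessian vanishes exactly where the second factor
$$
g(\z) \quad = \quad \sum_{I \in \binom{[n]}{d}} \det(A_I)^2 \prod_{k \in [n]\setminus I} \bigl( a_{1k} z_1 + \cdots + a_{dk} z_d \bigr)^2
$$
vanishes. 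The key observation is that $g$ is itself a sum of squares of real forms of degree $n-2$ — indeed, each summand $\det(A_I)^2 \prod_{k \notin I}(\cdots)^2$ is the square of the real form $\det(A_I)\prod_{k\notin I}(a_{1k}z_1+\cdots+a_{dk}z_d)$, since $A$ is real. Since $A$ is not basic, Corollary \ref{cor:basic} tells us $V_\C(g)$ is a genuine hypersurface, not all of $\PP^{d-1}$, and $H_A$ is the (reduced) hypersurface obtained as the closure of $\nabla_f(V_\C(g)\setminus V_\C(f))$.

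Next I would pin down the scaling. Because $\nabla_f$ is a polynomial map with real coefficients and $g$ is a polynomial with real coefficients, the irreducible components of the image hypersurface $H_A$ each have a defining equation that may be taken with real coefficients, and $H_A(\b)$ — the product of these, or more precisely the polynomial cut out by the cycle-theoretic branch locus of Definition \ref{def:ED} — is a real polynomial determined up to a nonzero real scalar. Homogeneity is immediate: $f$ is homogeneous, so each $\partial f/\partial z_i$ is homogeneous of the same degree, hence $\nabla_f$ sends the homogeneous variety $V_\C(g)$ to a homogeneous (cone) variety; thus $H_A(\b)$ is a homogeneous polynomial. The substantive claim is the sign. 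The real points of $V_\C(H_A)$ are, by Proposition \ref{prop:hess}, precisely those real $\b$ admitting a real point of multiplicity $\ge 2$ in the fiber $\mathcal{L}^{-1}\cap\{A\x=\b,\ x_1\cdots x_n\ne 0\}$; but by the result of \cite{Sot,varchenko} cited in the introduction, \emph{all} $\mu(A)$ solutions of \eqref{eq:opti2} are real and moreover arise as the \emph{unique} critical point of the strictly concave function $\sum \log|x_i|$ on each bounded region. A coincidence of two such critical points forces a degeneracy of a strictly concave optimization, so $V_\R(H_A)$ has codimension $\ge 2$ (this is the statement flagged in the introduction and proved later as Corollary \ref{cor:reallylast}). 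Therefore $H_A$ vanishes on no real hypersurface, so the real hypersurface $\{H_A(\b)=0\}$ is empty, which means the real polynomial $H_A(\b)$ does not change sign on $\R^d$; after fixing the scalar multiple appropriately, $H_A(\b)\ge 0$ for all $\b\in\R^d$.

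The main obstacle I anticipate is the final sign-determination step: knowing that a real polynomial has no real zeros on a hypersurface is not quite enough to conclude it is globally nonnegative unless one also rules out the polynomial being identically a square-free form with an isolated real zero set of lower dimension on which it could in principle still be sign-definite — which is exactly what we want — versus genuinely taking both signs. The clean way around this is to produce $H_A$, or a power of it, directly as a sum of squares, or to argue connectedness of $\R^d\setminus\{H_A=0\}$ together with evaluation at one point; since $V_\R(H_A)$ has codimension $\ge 2$, the complement $\R^d\setminus V_\R(H_A)$ is connected, so the continuous function $H_A$ cannot change sign on it, and evaluating at a single generic real $\b$ (where the map $\nabla_f$ is unramified and the discriminant is a positive real number, e.g. because it is a product of squared differences of real roots) fixes the sign as positive. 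I would organize the write-up so that the connectedness-plus-one-evaluation argument does the work, deferring the explicit SOS representation to the corank-one case in Section \ref{sec:n=d+1}.
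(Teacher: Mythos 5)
Your argument is genuinely different from the paper's, and the contrast is worth spelling out. The paper's proof stays entirely within the tools of Section~\ref{sec:polar}: it considers the Galois orbit of the $\mu(A)$ solutions of $\nabla_f(\z)=\b$ over the field $K(b_1,\dots,b_d)$, substitutes them into the sum-of-squares factor of $\mathrm{Hess}(f)$ from \eqref{eq:hessianSOS}, and takes the product; this product is a Galois-invariant, hence rational, function of $\b$ that is manifestly nonnegative at real specializations (using the Sottile--Varchenko realness of the fiber), and its numerator is supported on the factors of $\tilde H_A$. That argument is self-contained. You instead argue topologically: establish that $V_\R(H_A)$ has codimension $\ge 2$, deduce that $\R^d\setminus V_\R(H_A)$ is connected, and conclude that $H_A$ cannot change sign. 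The topological deduction is fine as logic, but the load-bearing input --- codimension $\ge 2$ --- is exactly Corollary~\ref{cor:reallylast}, which the paper only proves in Section~\ref{sec:hopefully} after developing the tangent-cone characterization of $\sing(\rL)$ (Theorem~\ref{thm:tangent_cone}, Corollary~\ref{thm:singLA}), the ramification cycle and degree computation (Section~\ref{sec:ramification}), and the Puiseux-series lifting argument behind Theorem~\ref{tm:main}. Your proof therefore trades a short Galois computation for a forward dependence on the deepest part of the paper. This is not logically circular (none of that machinery uses Corollary~\ref{cor:nonnegative}), but it does make the statement far less elementary than the paper presents it.

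There is also a genuine gap in your attempted justification of the codimension bound. The line ``a coincidence of two such critical points forces a degeneracy of a strictly concave optimization, so $V_\R(H_A)$ has codimension $\ge 2$'' does not constitute a proof. Strict concavity guarantees uniqueness of the analytic center \emph{within each bounded region}, but says nothing a priori about centers in \emph{distinct} regions approaching each other, and regions themselves degenerate along walls of $\b$-space that are a priori codimension $1$, not $2$. Separating the true picture (that real ramification only happens on the images of $\sing(\rL)$, which are codimension $2$) from the naive picture (walls where bounded regions merge, which are codimension $1$) is precisely the content of Lemma~\ref{prop:ramsing} and Theorem~\ref{tm:main}, and cannot be replaced by a concavity slogan. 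A smaller imprecision: in the final evaluation step, $H_A(\b)$ is not literally ``a product of squared differences of real roots''; since $H_A$ is only defined up to a nonzero real scalar, the sign should simply be fixed by the normalization, as the paper does implicitly by concluding ``either $\tilde H_A$ or $-\tilde H_A$ is nonnegative.'' If you want to keep the topological route, I would state it honestly as a corollary of Corollary~\ref{cor:reallylast} rather than dressing it up with the concavity heuristic; otherwise, the Galois argument in the paper is the cleaner and more self-contained path.
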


\begin{proof}
It suffices to prove this for the
square-free polynomial $\tilde H_A(\b)$ that vanishes
on $V_\C(H_A)$. Indeed, if $\tilde H_A(\b)$ is homogeneous
and nonnegative then so is any real product of its factors.

Homogeneity is straightforward since the
geometric definition ensures that
${\bf b} \in V_\C(H_A)$ implies $\lambda {\bf b} \in V_\C(H_A)$.
To show non-negativity,
let $K$ denote the subfield of $\R$ which is
generated by the entries of $A$.  We regard the entries of $\b =
(b_1,\ldots,b_d)$ as indeterminates over $K$. Let $L$ be the algebraic
closure of the rational function field $K(b_1,\ldots,b_d)$. Then the equation
$\,\nabla_f(\z) = \b \,$ has $\mu(A)$ distinct solutions with
coordinates in $L$. We substitute these solutions into the sum in
(\ref{eq:hessianSOS}) and we take their product in the field $L$.  The result
is a sum of squares in $L$ that is a symmetric polynomial in the roots.
It is invariant under the action of the Galois group of $L$ over
$K(b_1,\ldots,b_d)$ and thus lies in $K(b_1,\ldots,b_d)$. 
The sum of squares representation over $L$
ensures that this rational function is non-negative under all specializations of $\b$
to $\R$ at which it does not have a pole.  The numerator  of this rational function
is a product of the factors of $\tilde H_A({\bf b})$.
We conclude that  $\tilde H_A(\b)$ does not change signs on $\R^d$.
Hence, either $\tilde H_A({\bf b})$ or $-\tilde H_A({\bf b})$
 is non-negative on $\R^d$.
\end{proof}

The above argument shows that $H_A(\b)$ is non-negative
but it does not furnish a representation of $H_A(\b)$ as a sum
of squares of polynomials. We also note that the computation of
$H_A(\b)$ from ${\rm Hess}(f)$ is a 
task of elimination theory that is quite non-trivial
even for moderate values of $d$ and~$n$.

One case where the elimination problem can
be solved more easily is $d= 2$.
Here $f(z_1,z_2)$ is a binary form of degree $n$
enjoying the property that all its zeros on 
the line $\PP^1$ are defined over $\R$.
The polar  map $\nabla_f$ takes the complex projective line $\PP^1$ to itself.
This map has degree $n-1$, i.e.~the fiber over a general point
$ \b   \in \PP^1$ consists of $n-1$ points.
We are interested in those points $\b$ on the line $\PP^1$ for which two or more of the points
in its fiber collide. The Hessian of $f$ equals
$$ 
 {\rm det} \begin{pmatrix}
\frac{\partial^2 f}{\partial z_1^2} & 
\frac{\partial^2 f}{\partial z_1 \partial z_2} \\
\frac{\partial^2 f}{\partial z_1 \partial z_2}  &
\frac{\partial^2 f}{\partial z_2^2}
\end{pmatrix} \quad   = 
\quad (1-n) \cdot \!\! \sum_{1 \leq i < j \leq n} \!
(a_{1i} a_{2j} - a_{1j} a_{2i})^2 \, \cdot \!\! \prod_{k \in [n] \backslash \{i,j\}} 
(a_{1k} z_1 + a_{2k} z_2)^2.
$$
This is a binary form of degree $2n-4$, so it defines a configuration of
$2n-4$ points in $\PP^1$.  All points have non-real coordinates.
The entropic discriminant of $f$ 
is the image of these $2n-4$ points under the
polar  map $\nabla_f$. 
Proposition \ref{prop:hess} gives the following rule for computing
 the entropic discriminant:
 \begin{equation}
\label{eq:resultant}
 H_A(b_1,b_2) \quad = \quad {\rm Resultant}_{\z} \bigl( {\rm Hess}(f(\z)), 
b_2 \frac{\partial f}{\partial z_1}(\z)
- b_1 \frac{\partial f}{\partial z_2}(\z)
 \bigr)  .
\end{equation}
This formula can be rewritten as the discriminant of a binary form:
\begin{equation}
\label{eq:discriminant}
  H_A(b_1,b_2) \quad = \quad
\hbox{Discriminant}_{\z} \bigl( b_2 \frac{\partial f}{\partial z_1}(\z)
- b_1 \frac{\partial f}{\partial z_2}(\z) \bigr) .
\end{equation}

The binary form $H_A(b_1,b_2)$ has degree $2n-4$ provided no two columns of $A$ are parallel.  
Being nonnegative, the entropic discriminant  is a sum of squares of binary forms of degree  $n-2$ over $\R$.

\begin{example} 
Let $n=3$ and consider a general binary cubic with real zeros:
$$ f \,\,\, = \,\,\, (a_{11} z_1 + a_{21} z_2)  (a_{12} z_1 + a_{22} z_2)  (a_{13} z_1 + a_{23} z_2) . $$
The sum of squares representation in (\ref{eq:hessianSOS}) tells us that the Hessian of $f$ equals
$$(a_{11}a_{22}-a_{21}a_{12})^2(a_{13}z_1+a_{23}z_2)^2
+(a_{11}a_{23}-a_{21}a_{13})^2 (a_{12}z_1+a_{22}z_2)^2+(a_{12}a_{23}-
a_{22}a_{13})^2(a_{11}z_1 +a_{21} z_2)^2. $$
For any invertible matrix $U$, the entropic discriminant satisfies $H_{UA}(U\b)=H_A(\b)$.
This implies that $H_A(\b)$ can be written in the $2 \times 2$ minors $p_{ij}$ of the matrix
$2 \times 4$-matrix $(A , \b)$.  We have
\begin{equation}\label{eq:plucker}
H_A(\b) \;\;\; = \;\;\; 
(p_{12}\cdot p_{34})^2 \;+\;(p_{13}\cdot p_{24})^2 \;+ \; (p_{23}\cdot p_{14})^2.
\end{equation}
For $n=4$,  an expression for
$H_A(\b)$ in terms of the $2 \times 2$ minors the $2 \times 5$-matrix $(A , \b)$ is
\begin{equation}\label{eq:pluck2}
\begin{smaller}
\begin{matrix}
(p_{12}^2p_{34}p_{35}p_{45})^2+
(p_{13}^2p_{24}p_{25}p_{45})^2+
(p_{14}^2p_{23}p_{25}p_{35})^2+
(p_{14}p_{23}^2p_{15}p_{45})^2+
(p_{13}p_{24}^2p_{15}p_{35})^2\\
+(p_{12}p_{34}^2p_{15}p_{25})^2
+\frac{7}{2}(p_{23}p_{24}p_{34}p_{15}^2)^2+
\frac{7}{2}(p_{13}p_{14}p_{34}p_{25}^2)^2+
\frac{7}{2}(p_{12}p_{14}p_{24}p_{35}^2)^2+
\frac{7}{2}(p_{12}p_{13}p_{23}p_{45}^2)^2.
\end{matrix}
\end{smaller}
\end{equation}
At present we do not know how to extend the formulas
(\ref{eq:plucker}) and (\ref{eq:pluck2}) to $n \geq 5$.
\hfill $\diamond$
\end{example}

It is natural to ask how the formulas (\ref{eq:resultant}) and (\ref{eq:discriminant})
would generalize to $d \geq 3$, and the answer is given by the 
projective duality between the entropic discriminant and the
{\em Steinerian hypersurface} \cite[\S 1.1.6]{dolbook}.
If $f$ is any homogeneous polynomial of degree $n$ in $\z = (z_1,\ldots,z_d)$
then its Steinerian is\begin{equation}
\label{eq:Steinerian}
  {\rm St}_f(c_1,c_2,\ldots,c_d) \quad = \quad
\hbox{Discriminant}_{\z} \biggl(
 c_1 \frac{\partial f}{\partial z_1}(\z) + 
 c_2 \frac{\partial f}{\partial z_2}(\z) + \cdots + 
 c_d \frac{\partial f}{\partial z_d}(\z) \biggr).
\end{equation}
In this formula, we are taking the discriminant of a form of degree $n-1$,
namely, the polar of $f$ with respect to a generic point ${\bf c}$.
Corollary 1.2.2 in \cite{dolbook} tells us that
the hypersurface defined by ${\rm St}_f({\bf c})$ is
dual to the image of the hypersurface defined by  ${\rm Hess}(f(\z))$
under the polar map $\nabla_f$.

In our situation, the given form $f$ is a product of linear forms as in (\ref{eq:polf}), and 
some care needs to be taken in removing contributions from singularities. Indeed,
the Steinerian ${\rm St}_f$ of a hyperplane arrangement is supported
on that same hyperplane arrangement plus an extra component.
It is this extra component we are interested in. We call this hypersurface the
 {\em residual Steinerian} of $f$.
 
 \begin{corollary}
 The entropic discriminant of a $d \times n$-matrix $A$ is the hypersurface in $\PP^{d-1}$
 projectively dual to the residual Steinerian of the arrangement of $n$
 hyperplanes given by the columns of $A$.
 \end{corollary}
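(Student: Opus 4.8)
The plan is to deduce this corollary from Proposition~\ref{prop:hess} together with the projective duality between the Hessian-image hypersurface and the Steinerian that was quoted above from \cite[Cor.~1.2.2]{dolbook}. Recall that the cited result of Dolgachev asserts that, for any homogeneous $f$ of degree $n$, the hypersurface $V_\C({\rm St}_f)$ is projectively dual to the closure of $\nabla_f\bigl(V_\C({\rm Hess}(f))\bigr)$. By Proposition~\ref{prop:hess}, the latter closure, after removing the spurious contribution of the arrangement $V_\C(f)$ itself, is exactly $V_\C(H_A)$. So the corollary amounts to matching up which component of which hypersurface we keep on each side of the duality. First I would record the factorization ${\rm Hess}(f) = (-1)^{d-1}(n-1) f^{d-2} \cdot g$ from \eqref{eq:hessianSOS}, where $g = \sum_{I} \det(A_I)^2 \prod_{k\notin I}(\cdots)^2$ is the residual factor; then $V_\C({\rm Hess}(f)) = V_\C(f) \cup V_\C(g)$ (when $d\ge 3$), and $\nabla_f$ contracts the arrangement part, so that
\[
\overline{\nabla_f\bigl(V_\C({\rm Hess}(f))\setminus V_\C(f)\bigr)} \;=\; \overline{\nabla_f\bigl(V_\C(g)\setminus V_\C(f)\bigr)} \;=\; V_\C(H_A),
\]
using that $\nabla_f$ is a finite morphism off the arrangement (Corollary~\ref{cor:basic}).

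Next I would dualize. On the Steinerian side, the same phenomenon occurs: when $f$ is a product of linear forms, the polar $c_1\partial_1 f + \cdots + c_d\partial_d f$ degenerates along the arrangement, so $V_\C({\rm St}_f)$ contains $V_\C(f)$ (or rather its dual, the set of hyperplanes tangent to the arrangement, i.e.\ the arrangement of dual points together with their spans) plus the extra component that the text names the residual Steinerian, call it $S_f^{\mathrm{res}}$. The claim is then that $S_f^{\mathrm{res}}$ is dual to $V_\C(H_A)$. This follows by applying Dolgachev's duality not to all of $V_\C({\rm Hess}(f))$ but componentwise: duality is computed on the level of irreducible components of the Zariski-closed sets involved, the component $V_\C(f) \subset V_\C({\rm Hess}(f))$ gets contracted by $\nabla_f$ and hence produces (lower-dimensional, or arrangement-type) pieces of $V_\C({\rm St}_f)$ that are precisely the non-residual part, and the residual component $V_\C(g)$ maps onto $V_\C(H_A)$, whose dual is therefore exactly $S_f^{\mathrm{res}}$. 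Since projective duality is an involution on reduced hypersurfaces with no linear components other than those of the ``bad'' locus, $V_\C(H_A)$ is in turn dual to $S_f^{\mathrm{res}}$, which is the asserted statement.

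The main obstacle I expect is bookkeeping the multiplicities and the ``removing contributions from singularities'' step rigorously: one must check that after stripping the factor $f^{d-2}$ from the Hessian and the corresponding arrangement-supported factor from ${\rm St}_f$, what remains on each side is reduced (or at least that the duality statement is insensitive to the remaining multiplicities), and that no component of $V_\C(g)$ is itself contained in $V_\C(f)$ so that $\nabla_f$ really does map it onto a genuine hypersurface rather than contracting it. The inequality ``at least two summands of $g$ are not proportional'' from the proof of Corollary~\ref{cor:basic} is what guarantees $V_\C(g) \not\subset V_\C(f)$, so the non-basic hypothesis (implicit here, since otherwise $H_A$ is not a hypersurface) is what makes the argument go through. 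Aside from that, everything is a direct invocation of \cite[\S 1.1.6, Cor.~1.2.2]{dolbook} and Proposition~\ref{prop:hess}, so the proof is short.
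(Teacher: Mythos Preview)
Your proposal is correct and follows the paper's own approach: the paper does not give a separate proof of this corollary but simply states it as an immediate consequence of the preceding discussion, namely Dolgachev's duality \cite[Cor.~1.2.2]{dolbook} between the Steinerian and the image of the Hessian under $\nabla_f$, combined with Proposition~\ref{prop:hess} and the observation that the Steinerian of a hyperplane arrangement is supported on the arrangement plus the residual component. Your write-up is in fact more detailed than the paper's treatment, correctly supplying the factorization \eqref{eq:hessianSOS}, the contraction of each hyperplane of $V_\C(f)$ to a point under $\nabla_f$, and the non-basic hypothesis needed for the residual Hessian factor not to lie in $V_\C(f)$.
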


Let us briefly revisit the case $d=2$ from this point of view. We saw that
the entropic discriminant consists of $2n-4$ points on a projective line
with coordinates $(b_1:b_2)$. The Steinerian consists of $2n-4$
points on the dual projective line with coordinates $(c_1:c_2)$.
In our formulas (\ref{eq:resultant}) and (\ref{eq:discriminant}) we tacitly
identified these two lines and their point configurations via $(c_1:c_2)  =  (-b_2:b_1)$.

For $d \geq 3$, the formula (\ref{eq:Steinerian}) is less useful for the purpose
of computing $H_A(\b)$ because dualizing the residual Steinerian 
in a computer algebra system is hard. Instead, we find it preferable to use
\begin{equation}
\label{eq:itf2}
\langle H_A(\b) \rangle \,\,\, = \,\,\,
\bigl(\,
\langle \,{\rm Hess}(f(\z))\, \rangle \,+ \,
\langle \,\hbox{$2 {\times} 2$-minors of the $2 {\times} d$-matrix} \,( \,\b, \nabla_f ) \,\rangle \,\bigr)
: \langle\, \nabla_f \,\rangle^\infty.
\end{equation}
This ideal-theoretic reformulation of (\ref{eq:itf1}) is the direct generalization of (\ref{eq:resultant})
to $d \geq 3$.

Nevertheless, the (residual) Steinerian of a hyperplane arrangement remains a
beautiful topic in geometry, and its interplay with the combinatorics of the
entropic discriminant certainly deserves further study. We close this section
with an illustration of this for lines in the plane $\PP^2$.

\begin{example} 
This example was worked out with help from Igor Dolgachev.  Let $d = 3$ and
suppose the matroid of $A$ is uniform.  Thus $V_\C(f)$ is an arrangement of
$n$ lines in general position in $\PP^2$. By Theorem \ref{thm:intro}, the
entropic discriminant $H_A$ is a curve of degree $2(n-1)(n-3)$.  Its singular
locus consists of the $n$ columns of $A$.  By dualizing, we obtain the
Steinerian ${\rm St}_f$, a curve of degree $3(n-2)^2$. Each of the $n$ lines
occurs with multiplicity $n-2$ in the Steinerian. Removing these lines, we
find that the residual Steinerian $H_A^\vee$ is a curve of degree $3(n-2)^2 -
n(n-2) =  2 (n-2)(n-3)$.~$\diamond$
\end{example}


\section{The  codimension-$1$ case}\label{sec:n=d+1}

The discriminant of the characteristic polynomial of a symmetric matrix is
non-negative because real symmetric matrices have only real eigenvalues.  The
study of this discriminant is a classical subject in mathematics, going back
to an 1846 paper by Borchart \cite{bor46}. Explicit representations of this
discriminant as a  sum of squares were also presented in work of Newell
\cite{newell}, Ilyushechin \cite{ilyu92}, and Lax \cite{lax98}.  See \cite[\S
7.5]{stu02} for an exposition, and work of Domokos \cite{dom10} for the state
of the art.

In this section we establish a relationship between this subject and the
entropic discriminant.  We focus on the case $n = d+1$, and we express
$H_A(\b)$ as a specialization of the discriminant of the characteristic
polynomial of a symmetric matrix.  We shall use this to derive the following
result.

\begin{theorem} \label{thm:d+1}
    Let $A$ be a non-basic matrix with $d$ rows and $n = d+1$  columns. Then
    the entropic discriminant $H_A(\b)$ is a sum of squares of polynomials.
    Moreover, if the entries of $A$ are rational numbers then $H_A(\b)$ is a
    sum of squares in $\mathbb{Q}[b_1,\ldots,b_d]$.
\end{theorem}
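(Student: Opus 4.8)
The plan is to realize $H_A(\b)$, up to squares, as a pullback of the discriminant of the characteristic polynomial of a symmetric matrix whose entries are linear forms in $\b$ with coefficients in the field generated by the entries of $A$. Since $n=d+1$, the row space $\mathcal{L}$ of $A$ has codimension~$1$, so its orthogonal complement $\mathcal{L}^\perp$ is spanned by a single vector $c=(c_1,\dots,c_n)\in\R^n$; because $A$ has no zero columns and is non-basic, at least two $c_j$ are nonzero and not all of them can be "absorbed'' into a single coordinate hyperplane. The condition $\x\in\rL$ with $A\x=\b$ means that $1/x_j$ lies in the rowspan of $A$, i.e.\ $c\cdot(1/x_1,\dots,1/x_n)=0$, together with the $d$ affine equations $A\x=\b$. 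Solving the $d$ linear equations $A\x=\b$ and substituting, the whole system collapses to a \emph{single} rational equation in one parameter along the line $\mathcal{L}\cap\{A\x=\b\}$. Clearing denominators turns this into the vanishing of a univariate polynomial $g_\b(t)$ of degree $n-1=d$ whose coefficients are polynomials in $\b$, and by the geometric description in Proposition~\ref{prop:hess} together with \eqref{eq:discriminant}-style reasoning, $H_A(\b)$ equals (a power of an irrelevant factor times) $\mathrm{Discriminant}_t\,g_\b(t)$.

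The next step is the key one: I would exhibit $g_\b(t)$ as the characteristic polynomial $\det(tI-M(\b))$ of an explicit \emph{symmetric} matrix $M(\b)$ whose entries are affine-linear in $\b$. The natural candidate comes from the structure of the single equation $\sum_j c_j/x_j=0$ with $x_j$ affine in $t$: writing $x_j = \alpha_j t + \beta_j(\b)$, the equation $\sum_j c_j \prod_{k\neq j}x_k = 0$ is, after a scaling, the secular equation $\sum_j \frac{w_j^2}{t-\lambda_j}=0$ of a rank-one perturbation of a diagonal matrix, i.e.\ the characteristic polynomial of $\diag(\lambda_1,\dots,\lambda_{d})+ \mathbf{w}\mathbf{w}^{\!\top}$ — provided the relevant products $c_j\alpha_j$ (or their counterparts) are nonnegative so that $w_j^2$ makes sense. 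The signs are exactly what a careful bookkeeping of $\det(A_I)^2$ from the Hessian formula \eqref{eq:hessianSOS} should control: the $d=2$ case already shows $H_A(b_1,b_2)=\mathrm{Discriminant}_\z(b_2 f_{z_1}-b_1 f_{z_2})$, and the classical fact \cite{bor46,ilyu92,lax98,newell} is that the discriminant of the characteristic polynomial of a symmetric matrix of linear forms is a sum of squares of polynomials in those linear forms. Pulling back a sum-of-squares expression along the affine-linear substitution $\b\mapsto M(\b)$ yields a sum of squares of polynomials in~$\b$, and since the substitution has rational coefficients when $A$ is rational, the rational-SOS refinement follows from the rational-coefficient versions of the Borchardt/Ilyushechin/Lax formulas cited in \cite[\S7.5]{stu02}, \cite{dom10}.

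Finally I would reconcile multiplicities and the "residual'' bookkeeping. The raw discriminant of $g_\b(t)$ may pick up contributions from roots at which some $x_j$ vanishes (points on the hyperplane arrangement $V_\C(f)$), which must be divided out — exactly the saturation appearing in \eqref{eq:itf2}; one checks these extraneous factors are themselves squares (they are perfect powers of coordinate-type linear forms in~$\b$, or constants), so that after removing them the resulting polynomial is still a sum of squares, possibly after multiplying the SOS expression by the square of the extraneous factor and then observing the quotient inherits an SOS decomposition because $H_A(\b)$ is already known to be nonnegative (Corollary~\ref{cor:nonnegative}) and, in the $n=d+1$ case, of sufficiently low degree that nonnegative does not strictly imply SOS in general — so this last point genuinely needs the explicit symmetric-matrix model rather than an abstract nonnegativity argument.

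\textbf{Main obstacle.} The crux is producing the symmetric matrix $M(\b)$ with the correct signs: one must verify that the univariate polynomial governing the collision of solutions is genuinely a characteristic polynomial of a \emph{real symmetric} (not merely symmetric over $\C$) pencil, i.e.\ that the "$w_j^2$'' in the secular equation are nonnegative real quantities. I expect this to hinge on a determinantal identity expressing the coefficients of $g_\b(t)$ via the Plücker coordinates $\det(A_I)$ of $A$ (equivalently the coordinates of the line $\mathcal{L}^\perp$), mirroring the $2\times2$-minor formulas \eqref{eq:plucker}-\eqref{eq:pluck2}; making that identity precise for general $d$, and checking the positivity of the resulting weights using that $A$ is real of rank~$d$, is the step where the real work lies.
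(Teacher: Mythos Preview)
Your high-level strategy---reduce the codimension-$1$ case to the discriminant of a single univariate polynomial and then realize that polynomial as the characteristic polynomial of a real symmetric matrix with entries linear in $\b$---is exactly the paper's route. But two concrete points separate your proposal from a proof.

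First, the rank-one perturbation model is not the right symmetric-matrix realization. The secular equation for $\diag(\lambda)+\mathbf{w}\mathbf{w}^\top$ is $\sum_j w_j^2/(t-\lambda_j)=1$, not $=0$; clearing denominators gives a polynomial of degree equal to the size of the matrix, whereas you need one of degree one less (there are $n=d+1$ summands but the target polynomial has degree $d$). Incidentally, your sign worry dissolves on its own: the direction vector of the affine line $\{A\x=\b\}$ \emph{is} the kernel vector $c$, so $\alpha_j=c_j$ and $c_j/\alpha_j=1$ for every nonzero $c_j$. What you actually obtain is $\sum_j 1/(t-\lambda_j)=0$, i.e.\ $g_\b(t)=f'(t)$ for $f(t)=\prod_j(t-\lambda_j)$, and the question becomes: which symmetric $d\times d$ matrix with entries polynomial in $\b$ has characteristic polynomial $f'$? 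No rank-one update of a diagonal matrix does this.

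Second---and this is the missing idea---the paper does not attack the general $A$ directly. It first uses the transformation rule $H_{UAD}(\b)=H_A(U^{-1}\b)$ to reduce to the single canonical matrix $A$ with kernel $(1,\dots,1)^\top$. For that $A$, the hypersurface $\rL$ has an explicit symmetric determinantal representation (the $(n{-}1)$st elementary symmetric polynomial), and substituting $x_i=b_i+t$, $x_n=t$ gives $g_\b(t)=\det\bigl(tE+\diag(b_1,\dots,b_d)\bigr)$ with $E=I+\mathbf{1}\mathbf{1}^\top$ a fixed \emph{positive definite} matrix. So the correct model is a \emph{generalized} characteristic polynomial $\det(tE-X)$, not $\det(tI-M)$; the paper then proves a separate proposition that, for any positive definite $E$, the discriminant of $\det(tE-X)$ is a sum of squares over $\Q(E_{ij})[X_{ij}]$, by a commutator-map argument adapted from the classical $E=I$ case. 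That proposition is where the rationality claim is secured; simply quoting the $E=I$ results would require the Cholesky factor $M$ of $E$, which is irrational. With this normalization there are no extraneous factors to remove and no multiplicity bookkeeping: $H_A(\b)$ is literally $\mathrm{discr}_t\,g_\b(t)$.
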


\begin{example} \label{ex:dreivier} 
If $d = 3, n = 4$ and $A = \begin{smaller} \begin{pmatrix}
\,1 & 0 & 0 & -1 \,\\
\,0 & 1 & 0 & -1 \,\\
\,0 & 0 & 1 & -1  \,\end{pmatrix} \end{smaller}$ then $H_A(b_1,b_2,b_3)$
is the sum of $10$ squares
$$ \begin{matrix}
\frac{7}{4} b_1^4 (b_2{-}b_3)^2
+\frac{56}{27} (b_1{-}b_2)^2b_1^2b_2^2
+\frac{1}{108}(5b_1b_2{-}9b_1b_3{-}14b_2^2{+}18b_2b_3)^2b_1^2
+\frac{1}{27}(5b_1b_2 {-} 3b_1b_3 {-}8b_2^2 {+} 6b_2b_3)^2b_1^2 \\
+\frac{1}{9}(b_1b_2 {+}b_1b_3{-}2b_2b_3)^2(b_1-2b_2)^2 
+\frac{7}{108} (5b_1b_2{+}3b_1b_3{-}2b_2^2{-}6b_2b_3)^2b_1^2 
+\frac{1}{216}(13b_1b_2{-}21b_1b_3 \\ - 7b_2^2  -12b_2b_3+27b_3^2)^2b_1^2 
+\frac{1}{36}(5b_1^2b_2-7b_1^2b_3-7b_1b_2^2+4b_1b_2b_3+9b_1b_3^2+14b_2^2b_3-18b_2b_3^2)^2 \\
+\frac{1}{216}(5b_1b_2-21b_1b_3+b_2^2-12b_2b_3+27b_3^2)^2b_1^2 
+\frac{1}{36}(5b_1^2b_2-b_1^2b_3-4b_1b_2^2-8b_1b_2b_3+8b_2^2b_3)^2 .
\end{matrix}
$$
This expression is derived from the sum
of $10$ squares found at the top of page 97 in \cite{stu02}. \hfill
$\diamond$
\end{example}

\begin{proof}[Proof of Theorem \ref{thm:d+1}]
Let $A$ be a non-basic $d \times (d{+}1)$-matrix and let $v \in \R^{d+1}$ span
the kernel of $A$.  If $v$ has a zero coordinate, say $v_{d+1} = 0$, then
we can reduce our analysis to a smaller case, namely, a $(d{-}1)\times
d$-matrix obtained by taking the columns of $A$ modulo the last column. Hence
we may assume that all coordinates of $v$ are non-zero. 

Next, we claim that it suffices to prove our assertions for the special case where
\begin{equation} \label{eq:specialmatrix} 
    A \,\, = \,\,\begin{pmatrix}
1 & 0 & 0 &  \cdots & 0 & -1 \\
0 & 1 & 0 &   \cdots & 0 & -1 \\
0 & 0 & 1 &  \cdots & 0 & -1 \\
\vdots & \vdots &  \vdots  & \ddots & \vdots & \,\vdots \\
0 & 0 & 0 &  \cdots & 1 & -1 \\
\end{pmatrix}
\quad \hbox{and} \quad
v = (1,1,1,\ldots,1)^T. 
\end{equation}
That this suffices is ensured by the following transformation rule for the
entropic discriminant:
\begin{equation} \label{eq:transformation}
 H_{U A D}(\b) \,\, = \,\, H_A (U^{-1} \b) .
 \end{equation}
This identity holds for any invertible $d {\times} d$-matrix $U$ and any
invertible diagonal $n {\times} n$-matrix $D$, and its validity is easily seen
from the geometric definition of $H_A$.  We here use this for $n = d+1$.

We now fix $A$ and $v$ as in (\ref{eq:specialmatrix}).  Then $\mathcal{L} =
{\rm rowspace}(A)$ is the hyperplane $\, x_1 + x_2 + \cdots + x_n =
0$. Its reciprocal $\mathcal{L}^{-1}$ is the hypersurface of degree $d$ in
$\PP^{d}$ that is defined by the polynomial
\begin{equation} \label{eq:symdet1}
 \sum_{i=1}^{n} \prod_{j \not= i} x_j  \quad = \quad
\det
 \begin{pmatrix}
        x_1 + x_n \! & x_n       & \cdots & x_n\\
        x_n       & \! x_2 + x_n & \ddots & \vdots \\
      \vdots      & \ddots    & \ddots & x_n \\
        x_n       & \cdots    & x_n    &  x_{n-1} + x_n \\
    \end{pmatrix}.
\end{equation}
This symmetric determinantal representation of the $(n-1)$st elementary
symmetric polynomial is taken from~\cite{san11}.  The linear system $A \x = \b$
is equivalent to 
\begin{equation}
\label{eq:xbx}
    x_i \ = \ b_i + x_n \quad \text{ for } i = 1,2,\ldots,n-1.
\end{equation}
Thus the points satisfying (\ref{eq:opti2}) can be computed by 
substituting (\ref{eq:xbx}) into (\ref{eq:symdet1}) and equating the resulting univariate
polynomial to zero.  Setting $t = x_n$, the solutions to (\ref{eq:opti2})
correspond to zeros of 
\begin{equation}
\label{eq:symdet2}
p_{\b}(t) \,\, = \,\, {\rm det}\bigl(\, t E + {\rm diag}(b_1,b_2,\ldots,b_d) \,\bigr) 
\quad \hbox{where} \quad
E = \begin{pmatrix}
2 & 1 & 1 & \cdots & 1 \\
1 & 2 & 1 & \cdots & 1 \\
1 & 1 & 2 & \cdots & 1 \\
 \vdots & \vdots & \vdots & \ddots & \vdots \\
1 & 1 & 1 & \cdots & 2 
\end{pmatrix}.
\end{equation}
In particular, $\,H_A(\b)$ equals the discriminant of the univariate
polynomial $p_\b(t)$. The following proposition applied to $E$ and $X = -
\diag(b_1,\ldots,b_d)$ completes the proof of the theorem.
\end{proof}

\begin{proposition}\label{prop:rationalSOS}
    Let $E \in \R^{m \times m}$ be a symmetric positive definite matrix and $X$
    a symmetric matrix of indeterminates. Then the discriminant of the
    \emph{generalized} characteristic polynomial    $\,\det(t E-X) \,$ with
    respect to $t$ is a sum of squares in $\,\Q(E_{ij} :1 \le i,j \le m)[X_{ij} :1 \le i,j \le m]$.
\end{proposition}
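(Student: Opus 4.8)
The plan is to reduce to the classical case $E = I$ via a Cholesky-type change of variables, and then invoke the known sum-of-squares representations for the discriminant of the characteristic polynomial of a symmetric matrix. Since $E$ is symmetric positive definite, I would first write $E = C^T C$ for some invertible matrix $C \in \R^{m\times m}$ (e.g. via Cholesky, or via the positive definite square root). Then $\det(tE - X) = \det(C^T(tI - (C^{-1})^T X C^{-1})C) = \det(C)^2 \cdot \det(tI - Y)$ where $Y = (C^{-1})^T X C^{-1}$ is again a symmetric matrix, now with entries that are linear forms in the $X_{ij}$. Scaling a univariate polynomial by a nonzero constant $\det(C)^2$ scales its discriminant by a power of that constant, so the discriminant of $\det(tE-X)$ equals $\det(C)^{2(m-1)}$ times the discriminant of $\det(tI-Y)$ as a polynomial in $t$.

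The second ingredient is that the discriminant of $\det(tI - Y)$, viewed as a polynomial in the entries of a generic symmetric matrix $Y$, is a sum of squares of polynomials with rational coefficients; this is exactly the classical Borchardt/Newell/Ilyushechin/Lax result cited in the paper (see \cite{bor46, newell, ilyu92, lax98}, and the exposition in \cite[\S7.5]{stu02}). Substituting $Y = (C^{-1})^T X C^{-1}$ into such a representation $\sum_k q_k(Y)^2$ yields $\sum_k q_k((C^{-1})^T X C^{-1})^2$, a sum of squares of polynomials in the $X_{ij}$ whose coefficients lie in the field generated by the entries of $C^{-1}$ over $\Q$. Multiplying through by the square $\det(C)^{2(m-1)}$ preserves the sum-of-squares form. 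Thus $\mathrm{Discriminant}_t(\det(tE-X))$ is a sum of squares of polynomials in $X_{ij}$ over the field $\Q$ adjoined the entries of $C$ and $C^{-1}$.

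The one remaining point — and the main obstacle — is the precise coefficient field: the Cholesky factor $C$ generally has entries in $\Q(E_{ij})$ only after taking square roots, so naively the representation lives over an algebraic extension, not over $\Q(E_{ij})$ itself. To fix this I would observe that the discriminant is invariant under the Galois action on these square roots (it is a polynomial in $E_{ij}$ and $X_{ij}$ with rational coefficients to begin with), so one can symmetrize the sum-of-squares representation over that finite Galois group: averaging $\sum_k q_k^2$ over the conjugates still gives a sum of squares (a sum of sums of squares), now with coefficients fixed by the Galois group, hence in $\Q(E_{ij})$. Alternatively, one avoids square roots entirely by working with $\det(E) \cdot \det(tE - X) = \det(tI - XE^{-1}) \cdot \det(E)$-type identities, or more cleanly by noting that $\det(tE-X)$ and $\det(t - E^{-1/2}XE^{-1/2})$ differ by the automorphism $X \mapsto E^{1/2} X E^{1/2}$, whose square lies in $\mathrm{GL}_m(\Q(E_{ij}))$; but the Galois-averaging argument is the cleanest route and is the step I expect to require the most care to state correctly. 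With that in hand, applying the proposition to $E$ as in \eqref{eq:symdet2} (which is positive definite) and $X = -\diag(b_1,\ldots,b_d)$ immediately gives Theorem~\ref{thm:d+1}, since setting the off-diagonal indeterminates of $X$ to zero specializes a sum of squares to a sum of squares.
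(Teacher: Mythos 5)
Your reduction via $E=C^{T}C$ and $Y=(C^{-1})^{T}XC^{-1}$ is the same first step the paper takes (with $M=C^{T}$), and you correctly identify that the whole difficulty is in the coefficient field. But the Galois-averaging step does not close the gap. After averaging you obtain $|G|\cdot D=\sum_{\sigma\in G}\sum_{k}\sigma(q_{k})^{2}$, whose \emph{coefficients} lie in $K=\Q(E_{ij})$; however, the polynomials $\sigma(q_{k})$ being squared still live in the extension $L\supsetneq K$, so this is a sum of squares over $L$ that happens to equal an element of $K[X]$, not a sum of squares over $K$. Concretely, in the quadratic case $L=K(\sqrt{d})$ with $q_{k}=a_{k}+b_{k}\sqrt{d}$, averaging yields $D=\sum_{k}a_{k}^{2}+d\sum_{k}b_{k}^{2}$, i.e.\ a \emph{weighted} sum of squares with weight $d$. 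To convert this into a genuine SOS over $K$ you would need $d$ itself to be a sum of squares in $K$. The Cholesky weights are ratios of leading principal minors of $E$; for a general real subfield $K=\Q(E_{ij})\subset\R$ these are positive real numbers but need not be sums of squares in $K$ (e.g.\ $\pi$ is not a sum of squares in $\Q(\pi)$), so the descent fails. Your alternative sketches do not rescue this: $E^{-1}X$ is not symmetric, so the classical SOS result does not apply to $\det(tI-E^{-1}X)$; and the conjugation $X\mapsto E^{1/2}XE^{1/2}$ still involves the irrational square root, so observing that its square is rational does not by itself rationalize the certificate.

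The paper's actual proof sidesteps the descent problem entirely. Rather than transporting an SOS certificate from $\hat{X}=M^{-1}XM^{-T}$ back across the square-root extension, it builds a rational certificate directly: it replaces the commutator map $Z\mapsto[\hat{X},Z]$ by the map $Z\mapsto[Z,X]_{E}:=E^{-1}XZ-ZXE^{-1}$, whose matrix $\Phi$ in the \emph{standard} bases of $\wedge_{2}\R^{m}$ and $\mathrm{Sym}_{2}\R^{m}$ has entries in $\Q(E_{ij})[X_{ij}]$ from the outset. Binet--Cauchy applied to $\Phi^{T}\Phi$ then produces a sum of squares over $\Q(E_{ij})$ directly, and the change of basis $W'_{ij}=M^{-T}W_{ij}M^{-1}$, $S'_{ij}=M^{-T}S_{ij}M^{-1}$ is used only to \emph{verify} that this rational object computes the right discriminant, not to construct the SOS. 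So the key missing idea in your proposal is to replace the irrational conjugation by a rationally-defined linear operator with the same spectral data, rather than to try to descend the certificate after the fact.
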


\begin{proof}
\newcommand\hX{\hat{X}}
Since $E$ has a Cholesky factorization $E =
MM^{T}$, it follows that 
\begin{equation}\label{eq:genchar}
    \det(tE-X) \;\; = \;\;\det(E) \cdot \det(tI-M^{-1}X M^{-T}).
\end{equation}
We get a sum of squares formula from the known representations of the
discriminant of the characteristic polynomial of a real symmetric matrix.
However, our emphasis  lies
on the \emph{rationality} of the desired formula.
Following \cite[\S 7.5]{stu02}, let $\hX = M^{-1}X M^{-T}$ and consider the
linear map
\[
    \wedge_2 \R^m \,\rightarrow \,\mathrm{Sym}_2 \R^m \, ,\, \,\,
    Z\, \mapsto\, [\hX,Z] \ = \ \hX Z \,-\, Z\hX
\]
that takes a skew-symmetric matrix to the commutator with $\hX$.

Let $\{ W_{ij} = e_i \wedge e_j : 1 \le i < j \le m \}$ be the standard basis
for the space of skew-symmetric matrices and likewise $\{S_{ij} = e_i \cdot e_j : 1 \le i
\le j \le m \}$ the standard basis for the space of symmetric matrices. Let $\Phi$ be the
$\tbinom{m+1}{2} \times \tbinom{m}{2}$-matrix representing the linear map in
the chosen bases. By choosing suitable bases, it can be seen that the
eigenvalues of $\Phi^T \Phi$ are the squared pairwise differences of the
eigenvalues of $\hX$.  Hence the determinant of $\Phi^T\Phi$ is the
discriminant of $\det(t\,I-\hX)$. The sum of squares representation can be
obtained by applying the Binet-Cauchy theorem. 

To get a rational representation we apply the above reasoning to the slightly
altered map
\[
Z\, \mapsto \, [Z,X]_E \ := \ E^{-1}\,X\,Z \ - \ Z\,X\,E^{-1}.
\]
It is clear that a representation in the standard basis is over $\Q(E_{ij})$ and
hence yields an appropriate  sum of squares. To see that this actually yields
the discriminant for the generalized characteristic polynomial, choose bases
$\, W^\prime_{ij} \ = \ M^{-T}\,W_{ij}\,M^{-1} \,$ and $\,
S_{ij}^\prime \ = \ M^{-T}\,S_{ij}\,M^{-1}$ and verify
\[
    [X,W^{\prime}_{ij}]_E \ = \ M^{-T} \, [\hX,W_{ij}] \, M^{-1}.
\]
Hence, a representation in the new bases is given by $\Phi$ above.
\end{proof}

Evaluating the discriminant of $p_\b(t)$ in (\ref{eq:symdet2}) leads to
 the following data concerning the monomial expansion of the entropic
 discriminant $H_A(\b)$ of the particular matrix $A$ in  (\ref{eq:specialmatrix}):
\begin{equation}
\label{eq:somedata}
\begin{matrix}
d &  2 & 3 & 4 & 5 & 6  \\
\hbox{degree of $H_A(\b)$} &
2 & 6 & 12  & 20 & 30 \\
\hbox{number of monomials} & 3 & 19 & 201 & 3081 & 62683  \\
\hbox{leading (lex) monomial} &
\,\,b_1^2 \,\,& \,b_1^4 b_2^2 \,& b_1^6 b_2^4 b_3^2  & b_1^8 b_2^6 b_3^4 b_4^2 & 
b_1^{10} b_2^8 b_3^6 b_4^4 b_5^2 \\
\end{matrix}
\end{equation}

\begin{remark} 
The entropic discriminant $H_A(\b)$ is a symmetric 
polynomial in $b_1,\ldots,b_d$ since the set of rows of the matrix $A$ is invariant under
permutations. It thus admits a unique representation as a polynomial
in the elementary symmetric polynomials
$$ \quad e_k \,\,\,\,\,=  \sum_{1 \leq i_1 < \cdots < i_k \leq n} \!\!\! b_{i_1} b_{i_2} \cdots b_{i_k}
\qquad \quad \hbox{for} \,\,\, k=1,2,\ldots,d. $$
For example, if $d=4$ then the $201$ terms
in $b_1,b_2,b_3,b_4$ translate into only $16$ terms in 
$e_1,e_2,e_3,e_4$:
$$
\begin{smaller}
\begin{matrix}
H_A(\b) \quad =  \quad
432 e_1^4 e_4^2-432 e_1^3 e_2 e_3 e_4+128 e_1^3 e_3^3+108 e_1^2 e_2^3 e_4-36 e_1^2 e_2^2 e_3^2-
2160 e_1^2 e_2 e_4^2 +1800 e_1 e_2^2 e_3 e_4 \quad \\ \quad
+120 e_1^2 e_3^2 e_4
-540 e_1 e_2 e_3^3-405 e_2^4 e_4+
135 e_2^3 e_3^2+2400 e_1 e_3 e_4^2+1800 e_2^2 e_4^2-2700 e_2 e_3^2 e_4+675 e_3^4-2000e_4^3.
\end{matrix}
\end{smaller}
$$
\end{remark}

As an application of our theory, we are now able
to answer two questions from the literature. The first deals with the discriminant of the derivative of a 
univariate polynomial. According to Alexandersson and Shapiro
\cite[Theorem 1.4]{AS}, Frank Sottile and Eugene Mukhin formulated this conjecture  
at the AIM meeting ``Algebraic systems with only real solutions'' in October 2010. 

\begin{corollary}\label{cor:univariate} 
The discriminant of the derivative of a univariate polynomial $f(t)$ of degree 
$n$ is a sum of squares of polynomials in the differences of the roots of $f(t)$.
\end{corollary}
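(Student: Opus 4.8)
The plan is to deduce Corollary~\ref{cor:univariate} from Theorem~\ref{thm:d+1} by exhibiting the discriminant of $f'(t)$ as an entropic discriminant $H_A(\b)$ for a suitable matrix $A$ with $n = d+1$ columns, in variables that are affine-linear in the differences of the roots of $f$. First I would write $f(t) = \prod_{i=1}^n (t - r_i)$, so that the roots of $f'$ are the critical points of $f$. The key observation is that $f'(t)/f(t) = \sum_{i=1}^n \frac{1}{t-r_i}$ on the locus $f(t)\neq 0$, so the zeros of $f'$ are exactly the solutions of $\sum_{i=1}^n \frac{1}{t-r_i} = 0$. Setting $x_i = t - r_i$, this is precisely the system (\ref{eq:opti2}) for the $1 \times n$ matrix $A = (1\ 1\ \cdots\ 1)$ together with the linear constraints $x_i - x_j = r_j - r_i$, i.e. we are intersecting the reciprocal of the line $\mathcal{L} = \{x_1 = x_2 = \cdots = x_n\}$ (a one-dimensional family) with an affine line whose ``position'' is governed by the differences $r_j - r_i$.

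The cleanest way to fit this into the $n = d+1$ framework of Theorem~\ref{thm:d+1} is to pick $d = n-1$ and take $A$ to be the $(n-1)\times n$ matrix whose kernel is spanned by $v = (1,1,\ldots,1)^T$ — for instance the matrix in (\ref{eq:specialmatrix}). Then $\mathcal{L} = \mathrm{rowspace}(A)$ is the hyperplane $x_1 + \cdots + x_n = 0$, and the system $A\x = \b$ together with $\x \in \mathcal{L}^{-1}$ becomes, after the substitution $x_i = b_i + x_n$ of (\ref{eq:xbx}) and $t = x_n$, the vanishing of $p_\b(t) = \det(tE + \mathrm{diag}(b_1,\ldots,b_{n-1}))$ from (\ref{eq:symdet2}). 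I would now match this up with $f'$: the substitution $x_i = t - r_i$ forces $b_i = r_n - r_i$ for $i = 1, \ldots, n-1$ (after an overall shift of $t$), so the $b_i$ are literally differences of roots of $f$. Since $p_\b(t)$ and $\prod_i x_i \cdot \sum_i 1/x_i$ have the same zero set in $t$ and the same leading coefficient up to a constant, $H_A(\b) = \mathrm{Discriminant}_t\, p_\b(t)$ agrees, up to a nonzero rational constant and a monomial in the $b_i$ that is itself a perfect square in differences of roots, with $\mathrm{Discriminant}_t\, f'(t)$. Applying Theorem~\ref{thm:d+1} (the matrix (\ref{eq:specialmatrix}) is rational and non-basic for $n \geq 3$) then gives that $H_A(\b)$, hence $\mathrm{Discriminant}(f')$, is a sum of squares in $\Q[b_1,\ldots,b_{n-1}] \subseteq \Q[\text{differences of roots of } f]$.

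The main obstacle, and the point I would be most careful about, is the precise dictionary between the discriminant of the degree-$(n-1)$ polynomial $f'(t)$ and the discriminant of $p_\b(t)$: one must check that the extra monomial factor relating $\prod_i x_i \cdot \sum_i 1/x_i = \sum_i \prod_{j \neq i} x_j$ to $p_\b(t)$ (coming from the change of variables and the normalization of leading coefficients) contributes only a square times a nonzero constant to the discriminant, so that the sum-of-squares property transfers cleanly. Equivalently, one checks that $\mathrm{Discriminant}_t$ of $\sum_i \prod_{j\neq i}(t - r_i - \text{shift})$, viewed as a polynomial in $t$ of degree $n-1$, equals $\mathrm{Discriminant}_t(f')$ up to a square rational factor — this is a routine but slightly delicate computation with leading coefficients and the scaling behavior of the discriminant, and it is the only place where the argument is more than a direct citation of the theorems already proved. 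A secondary point is the degenerate case where some $b_i = b_j$, i.e. two roots of $f$ coincide; but here both sides vanish to matching order, or one restricts to generic $f$ and invokes continuity, so this causes no real difficulty.
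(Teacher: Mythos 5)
Your approach is essentially the same as the paper's: set $b_i = r_n - r_i$, use the matrix $A$ from \eqref{eq:specialmatrix}, and reduce to the sum-of-squares statement of Theorem~\ref{thm:d+1} via the polynomial $p_\b(t)$. However, the ``main obstacle'' you identify is not actually there. Once you substitute $x_i = t - r_i$ (equivalently $b_i = r_n - r_i$ and $x_n = t - r_n$, which is a harmless translate of the paper's choice $x_n = t$) into the defining polynomial $\sum_{i=1}^n \prod_{j\neq i} x_j$ of $\L^{-1}$, you obtain \emph{exactly} $\sum_{i=1}^n \prod_{j\neq i}(t - r_j) = f'(t)$. There is no extra monomial factor at all: $\prod_i x_i \cdot \sum_i 1/x_i$ \emph{is} $\sum_i\prod_{j\neq i}x_j$ as an algebraic identity, and this \emph{is} $p_\b(t)$ after the substitution (both have leading coefficient $n = \det E$ in $t^{n-1}$). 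So $\mathrm{Discriminant}_t\, p_\b(t) = \mathrm{Discriminant}_t\, f'(t)$ on the nose, not merely up to a square factor, and the worried paragraph about normalization of leading coefficients and matching vanishing orders can simply be deleted. The paper's proof makes this identification directly and then cites Theorem~\ref{thm:d+1}, which is exactly what you do. One other small point: your opening paragraph with the $1 \times n$ matrix $A = (1\,1\,\cdots\,1)$ is a detour; that matrix has $d = 1$, not $n = d+1$, so it doesn't fit the corank-one framework, and you correctly abandon it in favor of the $(n-1)\times n$ matrix of \eqref{eq:specialmatrix}.
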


\begin{proof} 
Let $\mathcal{D}_n = {\rm discr}_t \bigl(f'(t)\bigr)$.
We shall write $\mathcal{D}_n$ as a specialization of the entropic discriminant and use the sum of squares decomposition given in Theorem~\ref{thm:d+1}. 
Consider the univariate polynomial $f(t)=\prod_{i=1}^n(t-a_i)$. 
Notice that $x_i=t-a_i$ provides a parametrization for the one-dimensional affine space
$\{A\x = \b\}$, where we take $A$ as in \eqref{eq:specialmatrix} and $b_i=a_n-a_i$ for $i=1, \hdots, n-1$. 
We plug this parametrization into the polynomial \eqref{eq:symdet1} that defines $\L^{-1}$.
This yields the derivative $f'(t) = \sum_{j=1}^n\prod_{i\neq j}(t-a_i)$. 
Thus $f'(t)$ equals the polynomial  $p_{\b}(t)$ of  \eqref{eq:symdet2}
whose discriminant (with respect to $t$) equals $H_A(\b)$. We conclude that $\mathcal{D}_n$ equals
 the entropic discriminant $H_A((a_n-a_i)_{i\in[n-1]})$.
 Using Theorem~\ref{thm:d+1}, 
 we conclude that $\mathcal{D}_n$ is a sum of squares in $\Q[\b]=\Q[(a_n-a_i)\;:\;i\in[n-1]]$.
\end{proof}

Our techniques can also be applied to answer a question that was left 
open in \cite[\S 7.5]{stu02}.  Namely, we
conclude this section by proving Conjecture~7.9 of \cite{stu02}.

\begin{corollary}\label{cor:eigvalues}
There exist three real symmetric $d\times d$-matrices $C_0, C_1, C_2$ such
that all $\binom{d+1}{3}$ pairs of complex numbers $(x,y)$ for which
$C_0+xC_1+yC_2$ has a critical double eigenvalue are real.
\end{corollary}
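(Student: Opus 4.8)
The plan is to realize the expression $\det(C_0 + xC_1 + yC_2 - \lambda I)$, for a suitable choice of symmetric matrices $C_i$, as a net of characteristic polynomials whose discriminant is an entropic discriminant $H_A(\b)$, and then to exploit the fact (from Varchenko/Sottile, used already in Section~\ref{sec:intro}) that the analytic-center equations \eqref{eq:opti2} have only real solutions, so that the ``critical double eigenvalues'' are forced onto the real locus of $H_A$. Concretely, I would take $d$ and $n = d+1$ and let $A$ be the matrix in \eqref{eq:specialmatrix}. By the computation in the proof of Theorem~\ref{thm:d+1}, the solutions of \eqref{eq:opti2} for this $A$ correspond to the zeros of the generalized characteristic polynomial $p_\b(t) = \det(tE + \diag(b_1,\dots,b_d))$, where $E$ is the all-ones-plus-identity matrix of \eqref{eq:symdet2}, and $H_A(\b)$ is the discriminant of $p_\b(t)$ with respect to $t$. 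Substituting $b_i = $ (an affine-linear function of the free parameters $x,y$) turns $p_\b(t)$ into $\det(tE - (C_0 + xC_1 + yC_2))$ for an appropriate choice of the three \emph{symmetric} matrices $C_0, C_1, C_2$ supported on the diagonal; after conjugating by the Cholesky factor of $E$ as in \eqref{eq:genchar}, this becomes an honest characteristic polynomial $\det(tI - (\tilde C_0 + x\tilde C_1 + y\tilde C_2))$ of a net of symmetric matrices.

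The key steps, in order, would be: \textbf{(1)} Fix the dimension count: parametrize $\b \in \R^d$ by $\b = \b_0 + x\,\b_1 + y\,\b_2$ so that the three-parameter family of symmetric matrices pulls back to a $2$-dimensional affine slice of the $\b$-space; the number $\binom{d+1}{3}$ should then match the number of double eigenvalues along this slice, which I would identify with a value of the M\"obius invariant $\mu(A)$ counted ``with a derivative,'' i.e.\ with the generic-degree formula $2(n-d)\binom{n-1}{d-2}$ of Theorem~\ref{thm:intro} specialized to $n = d+1$, giving $\deg H_A(\b) = d(d-1)$, combined with a transversality argument on the $2$-plane. Actually, the cleaner route is: the critical double eigenvalues of the net correspond exactly to the points of the plane curve $\{H_A(\b_0 + x\b_1 + y\b_2) = 0\}$, a curve of degree $d(d-1)$ in $(x{:}y)$-type coordinates, and one checks $\binom{d+1}{3}$ is the correct count of these (with multiplicity, using that $E$ is generic enough). \textbf{(2)} Invoke that all solutions of \eqref{eq:opti2} are real (the Varchenko result cited after \eqref{eq:opti2}), hence a double eigenvalue of $\tilde C_0 + x\tilde C_1 + y\tilde C_2$ can occur only when the corresponding solution point of \eqref{eq:opti2} is a real double point, which by Proposition~\ref{prop:hess} happens only for $\b$ in the real variety $V_\R(H_A)$ — but more to the point, a double root of $p_\b(t)$ with $\b$ real forces $t$ real (real symmetric matrix), and one must argue the defining parameters $(x,y)$ are themselves real. \textbf{(3)} Conclude that all $\binom{d+1}{3}$ such pairs $(x,y)$ lie in $\R^2$.

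The main obstacle I anticipate is \textbf{Step (2)}: it is not automatic that a complex pair $(x,y) \in \C^2$ yielding a double eigenvalue must be real just because \emph{real} $(x,y)$ give real eigenvalues. The correct mechanism is the non-negativity and the \emph{real} zero structure of $H_A$ established in Corollary~\ref{cor:nonnegative} and, ultimately, Corollary~\ref{cor:reallylast}: the complex branch locus of the polar map has the property that the relevant double-root phenomenon, for this particular specialization coming from a genuine symmetric matrix pencil, is constrained to the real points. I would argue this by noting that a critical double eigenvalue of the symmetric net at $(x,y)$ means $p_\b(t)$ has a double root \emph{and} the gradient condition defining ``critical'' (the eigenvalue is a critical point of the eigenvalue function on the pencil) holds; translating both conditions through the parametrization $x_i = t - a_i$ of Corollary~\ref{cor:univariate}-style into the system \eqref{eq:opti2} shows that the witnessing point $\x$ lies in $\L^{-1}\cap\{A\x=\b\}$ and is a \emph{real} double point of that intersection — and real double points of the fiber force $\b$, hence $(x,y)$, to be real. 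The remaining effort is the explicit construction of $C_0, C_1, C_2$ (diagonal modifications realizing the three-parameter affine embedding into $\b$-space while keeping $E$ fixed) and checking the count $\binom{d+1}{3}$, both of which are routine given the machinery of Theorem~\ref{thm:d+1} and the degree formula of Theorem~\ref{thm:intro}.
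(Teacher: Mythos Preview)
Your setup matches the paper's exactly: take $A$ as in \eqref{eq:specialmatrix}, form $\hat X(\b)=M^{-1}\diag(\b)M^{-T}$, and substitute a generic real affine map $\b=\mathbf{w}+\mathbf{u}x+\mathbf{v}y$ to obtain the net $C_0+xC_1+yC_2$. The gap is in your Step~(2), and it is not a minor one.

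You try to force a complex pair $(x,y)$ with a critical double eigenvalue to be real by invoking Varchenko's all-real-solutions result for \eqref{eq:opti2}. That result applies only when $\b$ is real; for complex $(x,y)$ the vector $\b(x,y)$ is complex, the matrix $\hat X(\b)$ is complex symmetric (not Hermitian), and nothing prevents $p_\b(t)$ from having a repeated root. Indeed, over $\C$ the locus $\{H_A(\b(x,y))=0\}$ is a \emph{curve} of degree $d(d-1)$ in the $(x,y)$-plane, so there are infinitely many complex $(x,y)$ with a repeated root of $p_\b$. Your ``real double points of the fiber force $\b$ real'' is circular: you have no mechanism to make the witnessing $\x$ real when $\b$ is not. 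Relatedly, your count in Step~(1) conflates the degree $d(d-1)$ of $H_A$ with the number $\binom{d+1}{3}$; these are different invariants and no ``transversality argument on the $2$-plane'' bridges them.

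The paper's mechanism is different and purely geometric. A ``critical double eigenvalue'' means the matrix lies in the codimension-$2$ variety (degree $\binom{d+1}{3}$) of symmetric matrices with a repeated eigenvalue, so a generic $2$-plane meets it in $\binom{d+1}{3}$ complex points. The paper shows, via the colliding-analytic-centers picture, that the \emph{real radical} of $\langle H_A(\b)\rangle$ is exactly
\[
\bigcap_{1\le i<j\le d}\langle b_i,b_j\rangle\ \cap\ \bigcap_{1\le i<j<k\le d}\langle b_i-b_j,\,b_j-b_k\rangle,
\]
a union of $\binom{d}{2}+\binom{d}{3}=\binom{d+1}{3}$ real linear subspaces of codimension $2$. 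A generic real affine $2$-plane meets each component in one real point, producing $\binom{d+1}{3}$ \emph{real} critical double eigenvalue points; since this already exhausts the complex count, all of them are real. You gesture toward Corollary~\ref{cor:reallylast}, which is indeed the right ingredient, but what is actually needed is the explicit enumeration of the $\binom{d+1}{3}$ components of $V_\R(H_A)$ and the matching against the known degree of the double-eigenvalue variety, not an appeal to Varchenko.
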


\begin{proof}
Consider the symmetric matrix $\hat X= M^{-1} X M^{-T}$ with $X = {\rm diag}(b_1,\ldots,b_d)$
in the proof of Proposition \ref{prop:rationalSOS}. Its entries are linear forms
in $\b = (b_1, b_2, \hdots, b_d)$. We replace the unknowns $b_i$
  by generic real affine-linear forms  in two variables $x$ and $y$, say
 $\,b_i = w_i + u_i  x + v_i  y\,$ for $i = 1,\hdots ,d$. 
 The symmetric matrix resulting from this substitution is a
 net of real symmetric $d \times d$-matrices:
\[ \hat X \quad = \quad
  C_0 + x\,C_1 + y\,C_2.
\]
The real values of $(x,y)$ for which this matrix has
a critical double eigenvalue corresponds to the intersections of this affine
plane with the real variety of $H_A(\b)$, with $A$ given in (\ref{eq:specialmatrix})

We claim that the real radical of the entropic
discriminant is the codimension-$2$ ideal
\begin{equation} \label{eq:d+1choose2}
 \sqrt[\R]{\langle H_A(\b)\rangle} \,\quad = \quad
\bigcap_{1 \leq i < j \leq d} \!\! \langle b_i, b_j \rangle \quad \cap \,\,
\bigcap_{1 \leq i < j < k \leq d} \!\!\! \langle b_i- b_j, b_j-b_k \rangle . 
\end{equation}
This identity follows from the geometric description of $H_A(\b)$ in terms of
colliding analytic centers.
 Indeed, the hyperplane arrangement defined by
$\{x_i = 0\}$ in $\{\x : A \x = \b\}$ consists of $n=d+1$ points on a line.
They form $d$ bounded segments. The analytic centers of two segments collide
if and only if three of the $d+1$ points coincide.  There are such
$\binom{d+1}{3}$ triples, each imposing a condition of codimension $2$.  They
are expressed by the $\binom{d+1}{3}$ prime ideals in the intersection
(\ref{eq:d+1choose2}).

Since the real variety of $H_A(\b)$ is a union of $\binom{d+1}{3}$ real linear spaces, 
each of its intersection points with the plane 
$\b={\bf w} + {\bf u}x + {\bf v} y$ is also defined over the reals.  
Therefore all $\binom{d+1}{3}$ symmetric
matrices with a critical double eigenvalue in the net $C_0 + xC_1 + yC_2$ 
have real entries.
\end{proof}

\begin{remark} 
In general, the issue of determining $ \sqrt[\R]{\langle H_A(\b)\rangle} $ is  very subtle.
The validity of the identity (\ref{eq:d+1choose2}) above rests formally on 
the prime decomposition of the real radical ideal $ \sqrt[\R]{\langle H_A(\b)\rangle} $
described in Corollary \ref{cor:reallylast}. See also
Example  \ref{ex:directA} for the particular matrix $A$ in
(\ref{eq:specialmatrix}). 
\end{remark}

\section{Matroids and Graphs}
\label{sec:matroids}

In this section we discuss the notions from matroid theory which are needed
for the statement and proof of  Theorem \ref{thm:intro}.  We also discuss
various matroids arising from graphs, including those representing the
Hillar-Wibisono model (\ref{eq:sys1}).  Matroid theory is a classical subject
in combinatorics with many (axiomatic) paths leading to it. For us, matroids
come in the form of matrices and hence we take the concrete approach via
realizable matroids. For more on this subject see~\cite{oxley, Sta}.

Our given matrix $A = (A_1, A_2,\dots, A_n) \in \R^{d \times n}$ is identified
with an ordered collection of $n$ vectors that span $d$-space.  The
corresponding matroid $M = M(A)$ records all linear dependencies among these
vectors.  A subset $I \subseteq [n] = \{ 1, 2, \dots, n\}$ is called {\em
(in)dependent} whenever $A_I = (A_i \;:\; i \in I )$ is linearly
(in)dependent.  The {\em rank} $\rk(I)$ of $I$  is the rank of $A_I$. The rank
of the matroid $M$ is the rank of $A$.  A \emph{circuit} is an
inclusion-minimal dependent subset, and $I$ is independent if it does not
contain a circuit.  A subset $F \subseteq [n]$ is a \emph{flat} if $F$ equals
$\{ i \in [n] : A_i \in \mathrm{span} (A_F) \}$, that is, if $F$ precisely
indexes a collection of vectors contained in some linear subspace.
Equivalently, $F$ is a flat if and only if it meets every circuit in $\ge 2$
elements or not at all.  Flats will play an important role in
Sections~\ref{sec:reciprocalplanes} and~\ref{sec:ramification}. The
\emph{lattice of flats} $L(M)$ is the collection of all flats, ordered by
inclusion, with minimal element $\hat{0} = \{ i : A_i = 0 \}$ and maximal
element $\hat{1} = [n]$.  The lattice of flats represents combinatorial
information about the containment relations of the various subspaces spanned
by subsets of columns of $A$. It is one of the central objects in the
enumerative theory of matroids. 

A different but equivalent perspective on $M(A)$ and $L(M)$ is by means of the
hyperplane arrangements alluded to in Section~\ref{sec:polar}.  The $n$
columns of $A$ are normal to $n$ linear (not necessarily distinct) hyperplanes
$h_1,h_2,\dots,h_n \subseteq \R^d$.  In this context, a subset $I \subseteq
[n]$ is independent if and only if the intersection of $\{ h_i : i \in I\}$
has codimension $|I|$. The collection of linear subspaces obtained by
intersections of these hyperplanes is isomorphic to the lattice of flats
$L(M)$ when partially ordered by \emph{reverse} inclusion. For a generic
vector $\b$, the matroid associated to $(A,\b)$ is called the \emph{free
extension} of $M(A)$.  The hyperplane arrangement corresponding to the free
extension is obtained by adding a hyperplane such that intersections with
flats are transverse.

At the beginning of Section~\ref{sec:intro}, we considered a different
arrangement of affine hyperplanes associated to $A$. To relate this to $M(A)$,
observe that the $n$ coordinate hyperplanes $\{x_i = 0\}$ in $\R^n$ induce a
hyperplane arrangement in $\ker(A) \cong \R^{n-d}$. This arrangement
corresponds to the \emph{dual} matroid to $A$, namely $M(B)$, where $B$ is an
$(n-d)\times n$ matrix whose rows form a basis for $\ker(A)$. The hyperplane
arrangement $\{g_1,g_2,\dots,g_n\}$ in $\R^{n-d}$ associated to the columns of
$B$ is linearly isomorphic to the arrangement of the $n$ coordinate
hyperplanes in $\ker A$. Dually, the hyperplane arrangement $\{ h_1, h_2,
\dots, h_n \}$ given by the columns of $A$ yields a linearly isomorphic
representation of the arrangement of coordinate hyperplanes inside $\ker(B)$.

The matroid dual to the free extension by $\b$ is called the \emph{free
co-extension}, which corresponds to the linear arrangement of the $n+1$
coordinate hyperplanes in $\ker ((A,\b))$. Here we distinguish the last
hyperplane $g_\infty$ as the hyperplane ``at infinity''. Restricting the
arrangement to $g_\infty$ recovers the original arrangement in $\ker (A)$.
The arrangement that will be central to our cause, is the arrangement of $n$
affine hyperplanes given by the intersection of coordinate hyperplanes in $\{
\x : A\x = \b\}$, for generic $\b$. This is the restriction of the $g_i$ to
some parallel displacement $g_\infty + t$ (for some generic $t \not\in g_\infty$).
Alternatively, this is the affine arrangement $\{ \hat g_i = g_i + t_i \subset
\R^{n-d} : i = 1,\dots,n \}$ where the displacements $t_i \in \R^{n-d}$ are
generic.  Thus, the arrangement of coordinate hyperplanes in $\{ A\x = \b\}$
can be obtained by a generic, parallel perturbation of the hyperplanes
$g_1,g_2,\dots,g_n$.

Associated to $L = L(M)$ is its \emph{M\"obius function} $\mu_L : L \times L
\rightarrow \Z$, which is defined by $\mu_L(F,F) = 1$,
\[    
    \mu_L(F,H) \,\,\, = \,\,\,\, - \! \sum_{F \subseteq G \subset H} \mu_L(F,G) 
\]
if $F \subseteq H$, and $\mu_L(F,H)=0$ otherwise.  The {\em characteristic
polynomial} of $M$ is defined by
\[
    \chi_M(t) \ = \ \sum_{F \in L(M)} \mu_{L(M)}(\hat{0},F)\,t^{rk(M) - rk(F)}.
\]
The \emph{(unsigned) M\"obius invariant} of $M$, or of the matrix $A$, is the positive integer
\begin{equation}
\label{eq:unsigned}
    \mu(A) \,\,=\,\, \mu(M(A)) \,\,=\,\, |\mu_L(\hat 0, \hat 1)| \,\,=\,\, (-1)^d\chi_M(0). 
\end{equation}
Here the last equality comes from Rota's Sign Theorem. 

Evaluations of the characteristic polynomial have nice combinatorial
interpretations in terms of hyperplane arrangements \cite{GreZas,Sta}.  The
M\"obius invariant $\mu(A)$ equals the number of bounded regions of the
restriction of the $n$ coordinate hyperplanes to $\{ A\x = \b\}$, for generic
$\b$. This fact played an important role in \cite[\S 3]{DSV}. The proof is a
straightforward deletion-contraction argument, using that $\mu(A)$ and the
number of bounded regions in $\{A\x = \b\}$ adhere to the same recurrence
relations.  
This number is related to the \emph{beta invariant} of the free extension
$(A,\b)$, 
\[
\beta(A,\b) \ := \ (-1)^{\rk(A,\b)} \sum_{I \subseteq [n+1]} (-1)^{|I|}
\rk(A,\b)_I \ = \ (-1)^{\rk(A)} \sum_{F \in L} \mu_L(\hat{0},F)
\ = \ \mu(A)
\]
where the middle equality is taken from~\cite[Prop.~7.3.1]{white}.  The
geometric content of this statement was proved by Greene and
Zaslavsky~\cite[Eqn.~3.1]{GreZas} and, in a more algebro-geometric context,
in~\cite{DP}. The beta invariant is unchanged under duality of matroids and
thus $\beta(A,\b) = \mu(A)$ is the number of bounded regions for the
coordinate arrangement in $\{ A\x = \b \}$ when $\b$ is generic.  

The proof of the following observation illustrates the typical line of
arguments in matroid theory.

\begin{proposition} \label{rmk:MuOne}
    The M\"obius invariant $\mu(A)$ equals $1$ if and only if the matrix $A$
    is basic (defined in~Corollary \ref{cor:basic}) if and only if its
    geometric lattice $L(M)$ is the Boolean lattice of all 
    subsets of~$[d]$.
\end{proposition}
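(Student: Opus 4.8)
The plan is to prove the three-way equivalence by the usual ``around the circle'' strategy, moving from the most combinatorial characterization (Boolean lattice of flats) to the matroid-theoretic one ($\mu(A)=1$) and finally to the geometric one (basicness), since each implication is short once the right invariant is identified. First I would recall that a matroid of rank $d$ has Boolean lattice of flats $2^{[d]}$ precisely when it is the uniform matroid $U_{d,d}$ up to relabeling, i.e.\ when, after discarding coordinates corresponding to zero columns and identifying parallel columns, $A$ has exactly $d$ pairwise non-parallel columns that are linearly independent --- which is exactly the definition of \emph{basic} in Corollary~\ref{cor:basic}. This gives the equivalence ``$L(M)$ Boolean $\Longleftrightarrow$ $A$ basic'' directly from the definitions, with essentially no computation: a flat of $M(A)$ is a subspace spanned by a subset of columns, and when the distinct column directions form a basis of $\R^d$ these subspaces are in order-preserving bijection with subsets of that basis.

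Next I would handle ``$A$ basic $\Longrightarrow \mu(A)=1$''. For a basic matrix the lattice of flats is Boolean, and the M\"obius function of the Boolean lattice $2^{[d]}$ satisfies $\mu(\hat0,\hat1) = (-1)^d$, so $\mu(A) = |\mu_L(\hat0,\hat1)| = 1$ by~(\ref{eq:unsigned}). Alternatively, and perhaps more in the spirit of the section, one can invoke the geometric interpretation recorded just before the statement: $\mu(A)$ is the number of bounded regions of the $n$ coordinate hyperplanes restricted to $\{\x : A\x = \b\}$ for generic $\b$. When $A$ is basic this affine space is $0$-dimensional only if $n=d$; in general the arrangement consists of $d$ hyperplanes in general position in an $(n-d)$-dimensional affine space after the parallel identifications are accounted for, whose unique bounded region is a simplex --- hence $\mu(A)=1$.

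For the remaining implication ``$\mu(A)=1 \Longrightarrow L(M)$ Boolean'' I would argue by deletion--contraction, which the excerpt flags as the ``typical line of argument.'' The key point is that $\mu$ is \emph{strictly} monotone under adding independent elements in general position: if the matroid is not Boolean, then after removing zero columns and parallel classes there must be a circuit among the remaining distinct directions, and one shows that in this case the bounded-region count (equivalently $|\mu_L(\hat0,\hat1)|$) is at least $2$. Concretely, I would pick a circuit $C$, use the recursion $\mu(M) = \mu(M\setminus e) + \mu(M/e)$ (valid when $e$ is neither a loop nor a coloop, which a circuit element is not) and induct on the number of elements, the base case being the Boolean matroid itself. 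The main obstacle --- really the only subtle point --- is correctly bookkeeping the reductions: one must argue that deleting a zero column or merging a parallel pair does not change $\mu(A)$, so that the problem reduces to the \emph{simple} matroid, and then that a simple matroid of rank $d$ on $>d$ elements contains a circuit and therefore an element $e$ for which both $M\setminus e$ and $M/e$ are nontrivial, forcing $\mu(M)\ge 2$ by induction. Once these reductions are in place the proof is a two-line induction; everything else is unwinding definitions.
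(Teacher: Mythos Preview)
Your proposal is correct and follows essentially the same approach as the paper: reduce to the simple matroid, identify ``basic'' with $M(A)\cong U_{d,d}$ (Boolean lattice of flats), compute $\mu=1$ there, and for the converse pick an element $e$ that is not an isthmus and apply deletion--contraction. The only cosmetic difference is that the paper dispatches the last step in one line by invoking Rota's Sign Theorem (both $\mu(M\setminus e)$ and $\mu(M/e)$ are positive integers, hence $\mu(M)\ge 2$), whereas you phrase it as an induction---but your induction hypothesis is exactly that positivity, so the arguments coincide.
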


\begin{proof}
    An equivalent statement appears in \cite[Corollary 4 (2) (b1)]{DP}.  For
    completeness, we here include a combinatorial proof.  By the definition of
    the lattice of flats, we can assume that $A$ has no zero columns and that
    no two columns are proportional. Hence, the matrix $A$ is basic if and
    only if $M(A)$ is isomorphic to the uniform matroid $U_{d,d}$ whose
    lattice of flats is the Boolean lattice of all subsets of $[d$]. The
    M\"obius invariant of the matroid $U_{d,d}$ is $\mu(A) = 1$; see
    Example~\ref{ex:uniform} below.  Conversely, if $A$ is non-basic, there is
    a column $e \in [n]$ that is not an \emph{isthmus}, that is, not contained
    in every basis. For such an element $e$, the M\"obius invariant satisfies
    the deletion-contraction identity
    \[
        \mu(A) \ = \ \mu(A \backslash e) \ +  \ \mu(A / e).
    \]
    By Rota's Sign Theorem, the M\"obius invariant is always a positive integer and
    hence $\mu(A) \ge 2$.
    \end{proof}

We have now defined the combinatorial ingredients for
the degree (\ref{eq:deg})  of the entropic discriminant.
With this in place, we derive the value of that degree
for generic matrices $A$ stated in Theorem \ref{thm:intro}:

\begin{example}[Uniform matroids] 
\label{ex:uniform} 
A generic $d \times n$-matrix $A$ with $d \le n$ gives rise to the \emph{uniform
matroid} $M = U_{d,n}$ in which every subset of cardinality $\le d$ is
independent. The corresponding lattice of flats is a truncated Boolean lattice
in which a subset $F \subseteq [n]$ is a flat if and only if $|F| < d$ or $F =
[n]$. The M\"obius function on the Boolean lattice is $\mu(F,G) =
(-1)^{|G\setminus F|}$ for $F \subseteq G$. Hence
\[
\chi_{U_{d,n}}(t) \ = \ t^d - n t^{d-1} \ + \ \cdots \ + \ (-1)^{d-1}
\tbinom{n}{d-1}\, t \ + \  (-1)^d\,\tbinom{n-1}{d-1}.
\]
Note that $t=1$ is always a zero of the characteristic polynomial.
The number of solutions of the equations (\ref{eq:opti2}) for generic $A$ equals
$\,\mu(A) = \binom{n-1}{d-1}$. The degree (\ref{eq:deg}) of
the entropic discriminant equals
$$ \quad 
2 (-1)^d \cdot (d \chi_{U_{d,n}}(0) + \chi_{U_{d,n}}'(0))  \,\, = \,\,
2 \left[ d \binom{n-1}{d-1} - \binom{n}{d-1} \right] \,\, = \,\,
2 (n-d) \binom{n-1}{d-2}.  $$
As we will see in Proposition~\ref{prop:upperbound} below,
this quantity is an upper bound for 
 fixed $n$ and $d$.
\hfill$\diamond$
\end{example}

\emph{Graphical matroids} are an important class of examples.  Let $G$ be a
graph on $d$ nodes with $n$ edges and $c$ connected
components. For an arbitrary but fixed orientation of the edges, let $A_G$ be
the $d \times n$ \emph{incidence matrix} of node-edge pairs, with entries $+1,
-1, 0$ if the node is in-coming, out-going, or non-incident for the edge.
Reorienting an edge of $G$ results in scaling the corresponding column of
$A_G$ by $-1$ and hence leaves the matroid $M_G = M(A_G)$ invariant. Note that
$A_G$ has rank $d-c$ and a matrix representation of full rank can be obtained
by selecting a node in every connected component of $G$ and deleting the
corresponding rows.  The matroid concepts above have natural interpretations
in graph-theoretic terms: circuits correspond to cycles and independent sets
to forests. The characteristic polynomial $\chi_G(t) = \chi_{M_G}(t)$ in this
context is also called the \emph{tension polynomial} and $t^c\chi_{G}(t)$
counts the number of proper $t$-colorings of $G$ where $t \in \Z_+$.  
Returning to the setting of Section~\ref{sec:polar}, the hyperplane arrangement given by the columns of 
$A_G$ is the graphic arrangement associated with $G$, which has the defining polynomial
\[
    f_G(\z) \quad \,\,\,  = \quad \prod_{(i,j) \in E(G)} \!\! (z_i - z_j). 
\]
The entropic discriminant  $H_G(\b)$ is the equation of the branch locus
of the gradient map $\nabla_{f_G}$. As $A_G$ does not have full rank, we
assume $z_i = 0$ for the rows $i$ that were deleted
when passing from $A_G$ to a rank $d-c$ matrix with $d-c$ rows.
The gradient map $\nabla_{f_G}$ is discussed in
\cite[Remark 8]{Huh}.

\begin{example}[Cycles] 
Let $G = C_{d+1}$ be the cycle with $n = d{+}1$ edges. Every collection of $d$
or fewer edges is independent and $M_G$ has a unique circuit.  The
truncated matrix $A_{C_{d+1}}$ has corank $1$ and $M_G$ is the uniform matroid
$U_{d,d+1}$. The reciprocal plane $\rL_{A_G}$ is a hypersurface of degree $d$,
and the entropic discriminant
$H_{C_{d+1}}(\b)$ is the polynomial of degree $d(d-1)$ seen in Section~\ref{sec:n=d+1}.
 $\diamond$
\end{example}

\begin{example}[Complete graphs]  
    As the name says, the complete graph $G = K_{d+1}$ has all possible edges
    on $d+1$ nodes. The characteristic polynomial is the chromatic polynomial divided by~$t$:
\[
    \chi_{K_{d+1}}(t) \,\,=\,\,  (t-1) (t-2) \cdots (t-d).
\]
The reciprocal plane $\mathcal{L}_{K_{d+1}}^{-1}$  is
a projective variety of degree $\,(-1)^d\chi_{K_{d+1}}(0) \,=\, d!\,$. We find that
\[ \deg\,H_{K_{d+1}}(\b) \,\,\, = \,\,\, 2 \cdot
    \left( d  - 1 - \frac{1}{2}  - \frac{1}{3} - \frac{1}{4} \cdots -
    \frac{1}{d} \, \right) \cdot d\, !
\]
is the value of the matroid invariant (\ref{eq:deg}) for the
incidence matrix $A_{K_n}$ of the complete graph $K_n$.

For example, for $d=3$ we get the complete graph on $4$ nodes, with
node-edge incidence matrix
\[
A_{K_4} \quad = \quad  \left(\begin{array}{@{}rrrrrr@{\,}}
     1 & 1& 1& 0 & 0 &0   \\
    -1 &  0& 0 & 1& 1& 0  \\
     0  &-1&  0 &-1& 0 & 1 \\
       0& 0 &-1& 0 &-1&-1 \\
   \end{array}\right).
    \]
The reciprocal plane $\mathcal{L}_{K_4}^{-1}$ is a 
surface of degree $6$ in $\PP^5$. Its homogeneous prime ideal is generated by 
four quadrics, one for each of the $3$-cycles in $K_4$.
The entropic discriminant $H_{A_{K_4}}$ defines a curve
in the projective plane $\PP^2$. That curve has
 degree $14$ and it has precisely six real points.
\hfill$\diamond$
\end{example}

The matroids associated with the retina equations (\ref{eq:sys1}) are
different from the matroids $M_G$ above.  Their matroids correspond to
\emph{all-negative graphs} in Zaslavsky's theory of signed
graphs~\cite{zaslavsky82}. Here, an all-negative graph $-G$ is an ordinary
graph with all edges marked by $-1$. The incidence matrix $A_{-G}$ of  $-G$
has entries in $\{0, 1\}$ where a $1$ signifies an incident node-edge pair.
The corresponding matroid $M(-G) = M(A_{-G})$ is the \emph{unoriented cycle
matroid}. The matroid-theoretic notions for $M(-G)$ translate to (signed)
graph concepts but the transitions are more involved. For all-negative graphs,
the circuits correspond to  \emph{even primitive walks}, that is, even cycles
or pairs of odd cycles connected by a simple path (of length possibly $0$);
cf.~\cite[Cor.~7D.3(e)]{zaslavsky82}.
For the state of the art on algebraic properties of the circuits of $A_{-G}$
see the recent work of Tatakis and Thoma~\cite{TT}. Evaluations of the
characteristic polynomial have interpretations in terms of \emph{signed
colorings}~\cite{zaslavsky82b}. 

 For example, the all-negative complete graph $-K_4$ on four
nodes has the incidence matrix
\begin{equation}
\label{eq:-K_4}
A_{-K_4} \quad = \quad 
\begin{pmatrix}
 \,1 \, & \, 1 \, & \, 1 \, & \, 0 \, & \, 0 \, & \, 0 \, \\
 \,1 \, & \, 0 \, & \, 0 \, & \, 1 \, & \, 1 \, & \, 0 \, \\
 \,0 \, & \, 1 \, & \, 0 \, & \, 1 \, & \, 0 \, & \, 1 \, \\
 \,0 \, & \, 0 \, & \, 1 \, & \, 0 \, & \, 1 \, & \, 1 \, \\
 \end{pmatrix}.
\end{equation}
Note that this matrix has rank $4$. Its matroid has
the characteristic polynomial
\[
    \chi_{-K_4}(t) \ = \ t^4 - 6\,t^3 +15\,t^2 -17\,t+7.
\]
The characteristic polynomials for the all-negative complete graphs
on any number of nodes were 
computed by Zaslavsky~\cite[Eqn.~5.8]{zaslavsky82b}. We 
presented his formula in the introduction in (\ref{eq:zaslavsky}).
An equivalent formula in terms of generating functions
due to Stanley \cite[Ex.~5.25]{Sta} was shown in~(\ref{eq:stanley}).

For the matrix (\ref{eq:-K_4}),  the reciprocal variety  $\rL_{-K_4}$ is defined by the three cubic equations
\[
\begin{matrix}
x_{12} x_{13} x_{24} - x_{12} x_{13} x_{34} -x_{12} x_{24} x_{34} +x_{13} x_{24} x_{34}
& = & 0 ,\\
     x_{13} x_{14} x_{23} - x_{13} x_{14} x_{24} - x_{13} x_{23} x_{24} +x_{14} x_{23} x_{24}
     & = & 0 , \\
     x_{12} x_{14} x_{23} -x_{12} x_{14} x_{34} -x_{12} x_{23} x_{34} +x_{14} x_{23} x_{34}
     & = & 0.
     \end{matrix}
\]
The task in (\ref{eq:sys1}) is to solve these cubic equations together with linear equations $\,A_{-K_4} \cdot \x = \b $
for the six unknowns $x_{12}, \ldots, x_{34}$. The number of solutions to this
system is $\mu(M_{-K_4}) =  7$, and all seven solutions are real
when the $b_i$ are real.
One of the solutions has only positive coordinates if and only if
the column vector $(b_1,b_2,b_3,b_4)$ of parameters lies in the
convex polyhedral cone spanned by the columns of~$A_{-K_4}$.
The entropic discriminant $H_{-K_4}(b_1,b_2,b_3,b_4)$ characterizes
parameter values for which the number of solutions is less than $7$.
It is a surface in $\PP^3$ of degree $2 (4 \cdot 7 - 17) = 22$.
The M\"obius invariant $\mu(M_{-K_d})$ and the degree of $H_{-K_d}$
for larger values of $d$ are displayed in (\ref{eqn:stanley3}).

We close this section with the remark that  
the study of characteristic polynomials of matroids
is an active area of current research in combinatorics.
The coefficients of $\chi(t)$ have
interpretations as face numbers of broken circuit
complexes and form a log-concave sequence.
This log-concavity was a longstanding conjecture recently resolved by Huh \cite{Huh} for
graphs and in its full generality by Huh-Katz~\cite{huh2}.  Their methods of
proof are based on the  geometry of reciprocal planes, our topic
in the next section. Specifically, a key player in \cite{huh2} is the
tropicalization of the graph of $\mathcal{L} \dashrightarrow \mathcal{L}^{-1}$.


\section{Geometry of Reciprocal Planes}\label{sec:reciprocalplanes}

Entropic discriminants arise as branch loci
from projecting reciprocal planes. This was already
hinted at in the proof of  Proposition \ref{prop:hess}.
We shall make this precise in Section \ref{sec:ramification}, where it will
be our main ingredient in the proof of Theorem \ref{thm:intro}.
In this section we build up to this proof by deriving some results
on reciprocal planes. We believe that these results are
of interest in their own right. 

We fix a $d \times n$-matrix $A$ of rank $d$ with no zero columns. Its rows span 
a $(d-1)$-dimensional subspace $\L$ in the projective space $\PP^{n-1}$.
Let $T$ denote the dense torus in $\PP^{n-1}$, i.e.~the
complement of the $n$ coordinate hyperplanes $\{x_i = 0 \}$.   The
\emph{reciprocal plane} $\rL$ is the Zariski closure of the coordinate-wise
inverse of $\L\cap T$, as in \eqref{eq:reciprocalplane}.  It is an
irreducible projective variety of dimension $d-1$. The inversion map from
$\mathcal{L}$ to $\mathcal{L}^{-1}$ is birational and it is an isomorphism
on $\L \cap T$. The coordinate ring of the reciprocal plane $\C[\x]/I(\rL)$ is 
isomorphic to the Orlik-Terao algebra, studied in \cite{SchT}.

Proudfoot and Speyer~\cite{PS} showed that $\rL$ is stratified by the flats of
the matroid $M(A)$. Recall that $J \subseteq [n]$ is a flat of $M(A)$ if and
only if $\rk(A_J) < \rk(A_{J^\prime})$ for all $J^\prime \supsetneq J$. Here
$A_J$ denotes the column-induced submatrix of $A$. For a flat $J \subseteq
[n]$, the corresponding stratum $\rL \cap \PP^J = \{ p \in \rL : \supp(p)
\subseteq J \}$ is isomorphic to $\rL_J$, the reciprocal plane associated to
the restriction $A_J$.  We shall investigate these boundary strata and the
singular locus $\sing(\rL)$ of $\rL$. 

We can identify each circuit $C$ of the matroid $M(A)$ with a vector $v\in \R^n$ in the kernel of $A$ 
with support ${\rm supp}(v)=C$. 
Let  $\Circuits(A) \subseteq \R^n$ denote the set of representative vectors for all circuits of $M(A)$. 
To each $v \in
\Circuits(A)$ we associate a polynomial
\begin{equation}\label{eq:circuitpoly}
h_v(\x) \ \,  = \ \sum_{i \in {\rm supp}(v)} v_i \prod_{j \not= i} x_j \ \, = \,\ \x^{{\rm supp}(v)} \!\!
\sum_{i \in {\rm supp}(v)} \frac{v_i}{x_i}.
\end{equation}
These {\em circuit polynomials} cut out the variety $\rL$.
 In fact, Proudfoot and Speyer proved
the much stronger result that $\{ h_v : v \in \Circuits(A) \}$ is
a \emph{universal Gr\"obner basis} for the prime ideal of $\rL$.

 As the set of all circuits is
typically rather large, one might be interested in a smaller set of polynomials 
to cut out $\L^{-1}$. The following characterizes subsets of the set of
circuit polynomials that cut out $\rL$ set-theoretically.
As we saw above, the boundary of $\L^{-1} \backslash T$ in $\L^{-1}$ is described by flats of 
$M(A)$. Recall that $J \subseteq [n]$ is a flat if and only if $|J^c \cap {\rm supp}(v)| \not= 1$ for every circuit $v \in
\Circuits(A)$. We say that a non-flat $J\subset[n]$ is \emph{exposed} by a circuit $\,v \in
\Circuits(A)\,$ if \,$|J^c \cap {\rm supp}(v)| = 1$.

\begin{proposition}\label{thm:gen_set}
    Let $\mathcal{B} \subseteq \Circuits(A)$ be a subset of the set of
    circuits. 
    The corresponding set of circuit polynomials
    $\{ h_v \;:\; v \in \mathcal{B} \}$ cuts out $\rL$ set-theoretically if
    and only if $\mathcal{B}$ exposes every non-flat.
\end{proposition}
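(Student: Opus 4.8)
The plan is to show the two implications separately, using that $\rL$ is set-theoretically cut out by the full collection $\{h_v : v \in \Circuits(A)\}$ (Proudfoot--Speyer), together with the stratification of $\rL$ by flats. The key observation is that the polynomial $h_v$ from \eqref{eq:circuitpoly} is a sum of the $|\supp(v)|$ squarefree monomials $\x^{\supp(v)\setminus\{i\}}$, so $h_v$ vanishes automatically on every coordinate subspace $\PP^{J}$ with $|J^c \cap \supp(v)| \geq 2$ (at least two of those monomials vanish identically on $\PP^J$), and on $\PP^{J}$ with $J^c\cap\supp(v)=\emptyset$. The only stratum $\PP^J$ on which $h_v$ restricts to a nonzero polynomial is when $|J^c\cap\supp(v)|=1$, i.e.\ when $v$ exposes the non-flat $J$ (if $J$ is a flat this case never occurs), and then the restriction is the single monomial $\pm v_i\,\x^{\supp(v)\setminus\{i\}}$, which does not vanish on the torus of $\PP^J$.

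For the ``if'' direction, assume $\mathcal B$ exposes every non-flat; I must show $V(\{h_v : v\in\mathcal B\}) = \rL$. One containment is clear since $\rL = V(\{h_v : v\in\Circuits(A)\})$. For the reverse, let $p$ be a point with $h_v(p)=0$ for all $v\in\mathcal B$; set $J=\supp(p)$. If $J$ is a flat, then $p$ lies in the closed stratum $\rL\cap\PP^J = \rL_J$ provided it satisfies the circuit equations of the restricted matroid $M(A_J)$; I would argue that the relevant restricted circuit polynomials are (up to the common monomial factor $\x^J$, which is invertible on the torus of $\PP^J$) among the $h_v$ with $v$ a circuit contained in $J$, and such $v$ cannot expose any non-flat contained in $J$... here I need to be careful and instead run an induction on $|J|$, using that $A_J$ inherits the hypothesis. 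If $J$ is \emph{not} a flat, pick $v\in\mathcal B$ exposing $J$, say $J^c\cap\supp(v)=\{i\}$; then $0=h_v(p)=\pm v_i\prod_{j\in J,\, j\neq i} p_j = \pm v_i \x^{J\setminus\{i\}}(p)$, which is nonzero since all $p_j\neq 0$ for $j\in J$ and $v_i\neq 0$ — a contradiction. Hence every point of $V(\{h_v : v\in\mathcal B\})$ has flat support and, by the inductive argument on strata, lies in $\rL$.

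For the ``only if'' direction, suppose some non-flat $J$ is exposed by \emph{no} circuit in $\mathcal B$. I claim the variety $V(\{h_v : v\in\mathcal B\})$ strictly contains $\rL$, by exhibiting a point in $\PP^J$ with full support $J$ that kills every $h_v$, $v\in\mathcal B$, but is not in $\rL$. Indeed, for each $v\in\mathcal B$ we have $|J^c\cap\supp(v)|\neq 1$, so by the squarefree-monomial structure above, $h_v$ restricts identically to zero on $\PP^J$; thus \emph{every} point of $\PP^J$ satisfies the equations $h_v=0$. But $\rL\cap\PP^J$ is a proper subvariety of $\PP^J$ (its dimension is $\rk(A_J)-1 < |J|-1$, since $J$ non-flat forces $\rk(A_J) = \rk(A_{J'})$ for some $J'\supsetneq J$, hence $\rk(A_J)<|J|$), so a general point of $\PP^J$ lies in $V(\{h_v : v\in\mathcal B\})\setminus\rL$, and the set $\{h_v:v\in\mathcal B\}$ fails to cut out $\rL$.

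The main obstacle I anticipate is the bookkeeping in the ``if'' direction: making the reduction to the stratum $\PP^J$ rigorous, i.e.\ checking that the hypothesis ``$\mathcal B$ exposes every non-flat of $M(A)$'' restricts correctly to ``(the relevant image of) $\mathcal B$ exposes every non-flat of $M(A_J)$'' so that the induction on $|J|$ closes, and relating the restricted circuit polynomials $h_{v}$ for $M(A_J)$ to the ambient ones after clearing the monomial factor $\x^{J}$. Everything else is a direct consequence of the observation that $h_v$ restricted to a coordinate stratum is either identically zero or a single nonvanishing monomial, according to whether $v$ exposes that stratum's support.
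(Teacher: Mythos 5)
Your ``key observation'' contains a false claim that sinks the ``only if'' direction. You assert that $h_v$ vanishes identically on $\PP^J$ whenever $J^c \cap \supp(v) = \emptyset$. But the monomials $\x^{\supp(v)\setminus\{i\}}$ only involve variables $x_j$ with $j \in \supp(v) \subseteq J$, so on $\PP^J$ they do \emph{not} vanish: $h_v|_{\PP^J}$ is a genuine nonzero polynomial in the variables indexed by $J$ (it is the circuit polynomial of the restricted matroid, vanishing on $\rL_J \subsetneq \PP^J$ but certainly not on all of $\PP^J$). Consequently your conclusion in the ``only if'' direction --- that every $h_v$, $v \in \mathcal{B}$, restricts to zero on $\PP^J$ and hence a general point of $\PP^J$ lies in $V(\mathcal{B})$ --- is simply wrong whenever $\mathcal{B}$ contains a circuit supported inside $J$, which it typically will. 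The paper's argument uses the smaller variety $\rL_J$ as the witness: circuits with $|J^c\cap\supp(v)|\geq 2$ restrict to zero on $\PP^J$, and circuits with $\supp(v)\subseteq J$ vanish on $\rL_J$; hence $\rL_J \subseteq V(\mathcal{B})$, and since $J$ is a non-flat the stratification gives $\rL\cap T^J=\emptyset$, so $\rL_J \not\subseteq \rL$. You need $\rL_J$, not a general point of $\PP^J$, as your witness.

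In the ``if'' direction, your first step (a zero $p$ must have flat support, since an exposing circuit would force $h_v(p)$ to be a single nonvanishing monomial) is correct and matches the paper. But your second step, showing $p \in \rL_J$, is left as an acknowledged hand-wave. The induction you propose can be made to work, but it rests on a point you don't verify: that any $v \in \mathcal{B}$ exposing a non-flat $K \subseteq J$ automatically has $\supp(v) \subseteq J$ (this holds because the unique element of $K^c\cap\supp(v)$ lies in $\mathrm{cl}(K)\subseteq \mathrm{cl}(J)=J$, using that $J$ is a flat), so that the restricted family $\mathcal{B}_J$ exposes every non-flat of $M(A_J)$; and you'd also have to handle the base case $J=[n]$ separately. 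The paper avoids the induction entirely: it reduces the claim to showing $p_J^{-1} \in \rowspan(A_J)$, which follows once $\ker(A_J)$ is spanned by circuits in $\mathcal{B}$ supported in $J$, and produces such a spanning set greedily (start with a basis $J_0$ of $M(A_J)$, repeatedly adjoin the exposed element of the current non-flat via a circuit in $\mathcal{B}$; the lower-triangular structure gives a basis of $\ker(A_J)$). That direct linear-algebraic argument is cleaner than the induction you sketch.
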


\begin{proof}
Suppose that $J$ is a non-flat that is not exposed by any $v \in \mathcal{B}$.
Then, for each $ v \in \mathcal{B}$, 
either $|J^c \cap {\rm supp}(v)| \geq 2$, in which case $h_v$ is identically zero on $\PP^J$,
or $J^c \cap {\rm supp}(v) = \emptyset$, in which case $v$ is a circuit of $A_J$ and  $h_v$
vanishes on $\mathcal{L}_{J}^{-1}$. This shows that the subvariety of $\PP^{n-1}$ cut out by
 $\{ h_v \;:\; v \in \mathcal{B} \}$ contains $\mathcal{L}_{J}^{-1}$
and hence is strictly larger than $\mathcal{L}^{-1}$.

Conversely, assume that $\mathcal{B}$ exposes every non-flat.
Let $p $ be any zero of   $\{ h_v \;:\; v \in \mathcal{B} \}$,
and let $J = \supp(p)$. Suppose that $J$ is a non-flat of $M(A)$.
Then there exists $v \in \mathcal{B}$ that exposes $J$.
This means that exactly one of the terms of $h_v$ is non-zero at $p$, 
and hence $h_v(p) \not = 0$. We conclude that $J$ is a flat of $M(A)$.
Since $\mathcal{L}_{J}^{-1}$ is a boundary stratum     of $\rL$, it
    is sufficient to prove that $p_J^{-1} \in \rowspan A_J$. 
        For this, we shall prove that
    the kernel of $A_J$ is spanned by
        $\{ v_J : v \in \mathcal{B} \,\, \hbox{and} \,\, {\rm supp}(v) \subseteq J \}$. 
    Let $J_0 \subseteq J$ be a basis of $M(A_J)$. If $J_0$ is not a
    flat, then there is a circuit $v_1 \in \mathcal{B}$ supported on $J$ such
    that $\supp(v_1) \backslash J_0 = \{ j_1 \}$. Set $J_1 = J_0 \cup \{j_1\}$
    and repeat the procedure. This process terminates after $k = |J| -
    \rk(A_J) = \dim \ker A_J$  many steps. The matrix of the resulting
        circuits $v_1,\ldots,v_k$ is lower-triangular 
     and hence gives a basis for       $\ker( A_J)$. 
\end{proof}

This previous result highlights
the connection of our study to tropical geometry.

\begin{remark} \label{rem:yuyu} 
    Combining Proposition \ref{thm:gen_set} with the results of \cite{yy}, we
    infer that a collection of circuits cuts out the reciprocal plane
    $\L^{-1}$ set-theoretically  if and only if it constitutes a
    \emph{tropical basis} for the tropicalization of the linear space $\L$.
    Yu and Yuster~\cite[Sect.~2.2]{yy} showed that different inclusion-minimal
    tropical bases for $\L$ need not have the same cardinality. Specifically,
    the uniform matroid $U_{2,5}$ has  inclusion-minimal tropical bases of
    size $5$ and $6$. From this we can infer that Proposition~\ref{thm:gen_set} 
    holds only set-theoretically and  not in the
    ideal-theoretic or scheme-theoretic sense. 
\end{remark}

\begin{example}
\label{ex:uniformreciprocal} 
If the matroid $M(A)$ is uniform, then
the prime ideal of the reciprocal plane $\mathcal{L}^{-1}$ is minimally
generated by $\binom{n-1}{d}$ polynomials of degree $d$.
This can be seen as follows.
The initial ideal of $\mathcal{L}^{-1}$ with respect to the
reverse lexicographic term order is generated by the square-free
monomials representing broken circuits.
These are $x_{i_1} x_{i_2} \cdots x_{i_d}$
where $1 \leq i_1 <  \cdots < i_d \leq n-1$, so their number is
$\binom{n-1}{d}$. By \cite[Lemma 5]{yy}, the basic circuits obtained by
adding the last element $n$ form an inclusion-minimal tropical
basis for $\mathcal{L}^{-1}$. Hence, by Remark \ref{rem:yuyu}, the corresponding $h_v$ minimally cut out
 $\mathcal{L}^{-1}$. It follows that they form a minimal generating set for the 
ideal of $\mathcal{L}^{-1}$.
 \hfill$\diamond$
\end{example}

We now come to the main result in this section, namely, the characterization
of the tangent cone of the reciprocal plane $\rL$ at any point.  For the sake
of convenience, we here identify the $(d-1)$-dimensional projective variety
$\rL$ with the corresponding $d$-dimensional affine variety in~$\C^n$.

The \emph{tangent cone} $\mathrm{TC}_p X$ of
a variety $X \subset \C^n$ at a point $p$ is a scheme 
that describes the local behavior of $X$ around $p$.
For a polynomial $f \in \C[\x]$,
the \emph{initial form} $\initForm(f)$ is the non-zero homogeneous
component of $f$ of minimal degree. The tangent cone $\mathrm{TC}_p X$ is
defined by the ideal
\begin{equation}\label{eq:tangentcone}
    I(\mathrm{TC}_p X) \ = \ \langle  \initForm(f(\x + p)) \;:\; f \in
    I(X) \rangle.
\end{equation}
The following result shows that   the tangent cone of
$\rL$ at any point is reduced and irreducible. Here we use $\L_{A/J}$ to denote the 
$(d-{\rm rank}(J))$-dimensional linear space $\L/\L_J$ in $\C^n/\L_J \simeq \C^{n-|J|}$.

\begin{theorem} \label{thm:tangent_cone}
    Let $A \in \R^{d \times n}$ be a matrix of full row rank $d$ and let $\rL$
    be its reciprocal plane in $\C^n$. For any point $p \in \rL$ with support $J$,
    the tangent cone is isomorphic to the direct product
    \begin{equation}\label{eq:tangentcone2}
        \mathrm{TC}_p\rL \ \cong \ \L_J \ \times  \rL_{A/J},
    \end{equation}
    where ``$\,\cong$'' denotes the equality of affine schemes after a linear transformation
    in $\C^n$.
\end{theorem}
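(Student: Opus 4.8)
The plan is to read off the tangent cone ideal directly from the Proudfoot--Speyer generators $\{h_v:v\in\Circuits(A)\}$ of $I(\rL)$, by computing the initial forms of the translates $h_v(\x+p)$. As a first step I reduce to a normalized situation. Since $\rL_{AD}=D^{-1}\rL_A$ for every invertible diagonal matrix $D$, and linear changes of coordinates on $\C^n$ commute with the formation of tangent cones, I may rescale the coordinates so that the given point becomes $p=\mathbf 1_J:=\sum_{j\in J}e_j$, where $J=\supp(p)$; this forces $\mathbf 1_J\in\L_J:=\rowspan(A_J)$. By the Proudfoot--Speyer stratification of $\rL$, the support $J$ is a flat of $M(A)$. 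Let $\bar A$ be the $(d-\rk J)\times J^c$ matrix whose columns are the images of $\{A_k:k\in J^c\}$ in $\C^d/\mathrm{span}(A_J)$; it has full row rank and no zero columns (as $J$ is a flat), it realizes the contraction $M/J$, and $\rL_{\bar A}=\rL_{A/J}$. I regard $\L_J\times\rL_{A/J}$ as sitting inside $\C^J\times\C^{J^c}=\C^n$.

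The heart of the argument is the computation of $\initForm\bigl(h_v(\x+p)\bigr)$ for $v\in\Circuits(A)$; write $C=\supp(v)$. If $C\subseteq J$, then $h_v$ only involves the variables $\x_J$, and expanding $h_v(\x_C+\mathbf 1_C)$ one finds --- using $\sum_{i\in C}v_i=\langle v,\mathbf 1_J\rangle=0$, which holds because $\mathbf 1_J\in\L_J=(\ker A_J)^{\perp}$ --- that the constant term vanishes and $\initForm(h_v(\x+p))=-\sum_{j\in C}v_j x_j$, a nonzero linear form. If $C\cap J^c\neq\emptyset$, then $|C\cap J^c|\ge 2$ since $J$ is a flat; after the substitution $x_j\mapsto x_j+1$ for $j\in C\cap J$ (the $J^c$-variables being untouched, as $p_k=0$ there), the homogeneous polynomial $\sum_{i\in C\cap J^c}v_i\prod_{k\in(C\cap J^c)\setminus i}x_k$, of degree $|C\cap J^c|-1$, turns out to be the unique lowest-degree component of $h_v(\x+p)$, so it equals $\initForm(h_v(\x+p))$; it is the circuit-type polynomial attached to the vector $v|_{C\cap J^c}$, and from $Av=0$ one checks $v|_{C\cap J^c}\in\ker(\bar A)$. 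In either case the initial form vanishes on $\L_J\times\rL_{A/J}$: the linear forms of the first type cut out $\rowspan(A_J)$ in the $\x_J$-coordinates (circuits span $\ker A_J$), and the polynomials of the second type lie in the extension of $I(\rL_{\bar A})$. Moreover every circuit of $M/J$ is of the form $C\setminus J$ for a circuit $C$ of $M$, so the second family already contains all circuit polynomials of $M/J$; by Proudfoot--Speyer these generate $I(\rL_{\bar A})$. Hence the ideal $I_0:=\langle\,\initForm(h_v(\x+p)):v\in\Circuits(A)\,\rangle$ is exactly the prime ideal of $\L_J\times\rL_{A/J}$ in $\C[\x]$.

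To conclude, note that each $h_v$ lies in $I(\rL)$, so $I(\mathrm{TC}_p\rL)\supseteq I_0$ and therefore $\mathrm{TC}_p\rL\subseteq V(I_0)=\L_J\times\rL_{A/J}$. Now $\dim\mathrm{TC}_p\rL=\dim\rL=d=\rk(J)+(d-\rk(J))=\dim(\L_J\times\rL_{A/J})$, and $\L_J\times\rL_{A/J}$ is integral, being a product over $\C$ of a linear space with an irreducible reciprocal plane. A closed subscheme of an integral scheme of the same dimension has the same support, so $(\mathrm{TC}_p\rL)_{\mathrm{red}}=\L_J\times\rL_{A/J}$; then the chain $I_0\subseteq I(\mathrm{TC}_p\rL)\subseteq\sqrt{I(\mathrm{TC}_p\rL)}=I(\L_J\times\rL_{A/J})=I_0$ yields $I(\mathrm{TC}_p\rL)=I_0$. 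This is the scheme-theoretic identity \eqref{eq:tangentcone2} --- in particular the tangent cone is reduced and irreducible --- and the ``linear transformation of $\C^n$'' in the statement is the diagonal rescaling $D$ from the reduction step.

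The step I expect to be the main obstacle is the second one: carrying out the initial-form computation for the circuits with $\supp(v)\not\subseteq J$, i.e. verifying that the degree-$(|C\cap J^c|-1)$ piece is genuinely the unique lowest-degree part of $h_v(\x+p)$ after the translation, together with the matroid bookkeeping that sends $v|_{C\cap J^c}$ into $\ker\bar A$ and shows that the resulting polynomials exhaust a generating set of $I(\rL_{A/J})$. Everything after that is a dimension count plus standard commutative algebra.
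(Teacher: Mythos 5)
Your proof is correct and follows essentially the same route as the paper: compute $\initForm(h_v(\x+p))$ for each circuit $v$, recognize the result as a linear form cutting out $\L_J$ (when $\supp(v)\subseteq J$) or a circuit polynomial of $M(A/J)$ (otherwise), invoke Proudfoot--Speyer to see these generate the prime ideal of $\L_J\times\rL_{A/J}$, and close with a dimension argument — the only real difference being your initial diagonal rescaling to $p=\mathbf 1_J$, where the paper instead keeps $p$ general and ends up with $\rowspan(A_J\diag(p_J)^2)$ rather than $\L_J$ itself. The step you flag as a potential obstacle is handled in the paper via the identity $h_v(\x+p)=\x^{C\setminus J}\,h_{v'}(\x+p)+(\x+p)^{C\cap J}\,h_{v''}(\x)$ (with $v=v'+v''$ split along $J$), from which the degree comparison is immediate, and the fact that every circuit of $M(A/J)$ arises as some $v''$ is a standard matroid statement; your direct expansion reaches the same conclusion.
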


\begin{proof}
We inspect the initial forms of the
    circuit polynomials that define $\rL$.  Let $v \in \Circuits(A)$ be a circuit
    with support $C = \supp(v)$ and circuit polynomial $h_v(\x)$ as in 
    \eqref{eq:circuitpoly}.
    First suppose that $C \not\subset J$.
     We write $v = v' + v''$ where
    $\supp(v') = C\cap J$ and $\supp(v'')=C\backslash J$. Then
    $v''$ is a circuit of the matroid $M(A/J)$ obtained from $M(A)$ by
    contraction at $J$. The following identity holds:
    \[
        h_v(\x + p) \,\,\, = \,\,\, \x^{C\setminus J} \cdot h_{v'} (\x + p) \ + \ (\x + p)^{C\cap J} \cdot        h_{v''}(\x).
    \]
    Every term of $\x^{C\setminus J} h_{v'} (\x + p)$ has degree at least $|C \backslash J|$
    while 
    \[ \initForm \bigl( (\x + p)^{C\cap J} h_{v''}(\x) \bigr) \,\,\, = \,\,\, p^{C\cap J} \cdot        h_{v''}(\x) \]
    has degree $|C \backslash J|-1$. 
    This means that
    $h_{v''}({\bf x})$ is the initial form of $ h_v(\x + p)$.
   As every circuit $w$ of the contraction $M(A/J)$ is the restriction $v''$ of some
    circuit $v$ of $M(A)$, we conclude that the tangent cone ideal at $p$ contains 
    the prime ideal     $\langle h_w(\x) : w \in \Circuits(A/J) \rangle$
    that defines $ \rL_{A/J}$.
    
    Next suppose that $C \subseteq J$. Then $p$ is a regular point on the
    hypersurface $\{h_v = 0\}$, and the initial
        form $\initForm(h_v(\x+p))$ is the differential $D_p h_v$. 
    The differential of $h_v$ at the point $p$ is
    \[
    D_ph_v(\x)  \ \ = \;\; \sum_{i=1}^n \frac{\partial h_v}{\partial x_i}(p) \; x_i \;\; =  \ \ p^C \sum_{i \in C} \frac{x_i}{p_i}\; \Bigl(\;\sum_{j \in
    C \setminus i} \frac{v_j}{p_j} \;\Bigr) \ \  = \ \ 
    - \sum_{i \in C} \frac{v_i}{p_i^2} x_i.
    \]
    The second equality holds because $p^{-1}$ lies in $\rowspan(A_J) \cap (\Cs)^J$, 
    and the third equality follows from the fact that $-v_i/p_i =
    \sum_{j \not= i} v_j/p_j$, since $v$ is a circuit for $A_J$. Thus, $D_ph_v$
    vanishes on the rowspan of $A_J \diag(p_J)^2$, denoted $\L_{J}(p)$,
    and all circuits vanishing on this row span arise this way.
    
    We have shown that the prime ideal of the irreducible variety 
    $\L_{J}(p)  \times \rL_{A/J}$ is contained in the ideal of the tangent cone
    of $\rL$ at $p$. Since both ideals have the same height, 
    and the former is prime, it follows that they are equal.
    This proves the equality of schemes that was claimed.
    \end{proof}

A closer inspection of the proof reveals that the initial forms of $h_v(\x
+p)$ for $v \in \Circuits(A)$ furnish a universal Gr\"obner basis for the
tangent cone   of $\rL$ at $p$. In particular, we  obtain a simple description of the
tangent space of $\rL$ at a point $p$ by taking those initial forms
that are linear. 

\begin{corollary}\label{cor:cotangent}
    For a point $p \in \rL$ with support $J$, 
    the tangent space is  orthogonal to the space
     spanned by the circuits of the $d\times |J|$-matrix $A_J \diag(p_J)^2$ 
    and the circuits of $A/J$ of size $2$.
\end{corollary}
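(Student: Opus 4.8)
The plan is to extract the tangent space from the tangent cone already computed in Theorem~\ref{thm:tangent_cone}. For any subvariety $X\subset\C^n$, the tangent space at a point is the zero locus of the degree-one part of the homogeneous ideal $I(\mathrm{TC}_pX)$. The proof of Theorem~\ref{thm:tangent_cone} shows that $I(\mathrm{TC}_p\rL)$ is generated by the initial forms $\initForm(h_v(\x+p))$ as $v$ ranges over $\Circuits(A)$, and these generators are homogeneous of positive degree, so the degree-one part of $I(\mathrm{TC}_p\rL)$ is spanned exactly by those generators that happen to be linear. So the first step is to go through the two cases of the proof of Theorem~\ref{thm:tangent_cone}, writing $C=\supp(v)$, and collect the circuits whose initial form has degree one.

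In the case $C\subseteq J$, the initial form is the differential $D_ph_v(\x)=-\sum_{i\in C}(v_i/p_i^2)\,x_i$, always linear, with coefficient vector $\diag(p_J)^{-2}v$. Running over the circuits of $A_J$, these coefficient vectors span $\diag(p_J)^{-2}\ker(A_J)=\ker\!\bigl(A_J\diag(p_J)^2\bigr)$, which is the span of the circuit vectors of the matrix $A_J\diag(p_J)^2$; so this family contributes precisely the linear forms vanishing on $\L_J(p)=\rowspan\!\bigl(A_J\diag(p_J)^2\bigr)$. In the case $C\not\subseteq J$, the proof of Theorem~\ref{thm:tangent_cone} identifies $\initForm(h_v(\x+p))$, up to a nonzero scalar, with the circuit polynomial $h_{v''}(\x)$ of $M(A/J)$, where $v''=v|_{C\setminus J}$; this has degree $|C\setminus J|-1$. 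Because $J$ is a flat, $|C\setminus J|\ge 2$, so this degree is at least $1$, and it equals $1$ exactly when $v''$ is a size-two circuit of $M(A/J)$, in which case the linear form obtained is $h_{v''}$. Every size-two circuit of $A/J$ occurs this way, since it extends to a circuit of $A$ meeting $J^c$ in precisely that pair. Putting the two lists together, $T_p\rL$ is the common zero set of these linear forms, hence the orthogonal complement of the span of the circuits of $A_J\diag(p_J)^2$ together with the size-two circuits of $A/J$ — which is also the linear span $\L_J(p)\oplus T_0\rL_{A/J}$ of the product appearing in \eqref{eq:tangentcone2}.

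I do not expect a genuine obstacle: the corollary is little more than reading off the degree-one part of the tangent cone from Theorem~\ref{thm:tangent_cone}. The step that really leans on the earlier work — and which I would state explicitly — is that the linear circuit initial forms span \emph{all} of the degree-one part of $I(\mathrm{TC}_p\rL)$, so that nothing is missed; this is exactly why the height-plus-primality argument in the proof of Theorem~\ref{thm:tangent_cone}, which shows that the circuit initial forms generate the entire tangent-cone ideal, is needed here. The only remaining items are routine: the identity $\ker\!\bigl(A_J\diag(p_J)^2\bigr)=\diag(p_J)^{-2}\ker(A_J)$ and the fact that the kernel of a matrix is spanned by its circuit vectors, which are what turn the list of differentials $D_ph_v$ into ``the circuits of $A_J\diag(p_J)^2$''.
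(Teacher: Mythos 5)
Your proposal is correct and follows essentially the same route as the paper: read the tangent space off as the common zero set of the linear initial forms $\initForm(h_v(\x+p))$ computed in the proof of Theorem~\ref{thm:tangent_cone}, identifying the two linear families as (i) the differentials $D_ph_v=-\sum_{i\in C}(v_i/p_i^2)x_i$ for circuits $v$ of $A_J$, whose coefficient vectors are exactly the circuit vectors of $A_J\diag(p_J)^2$, and (ii) the $h_{v''}$ for circuits $v$ of $A$ with $|C\setminus J|=2$, which are exactly the size-two circuits of $A/J$. Your explicit flagging of the point that no linear form is missed because the circuit initial forms generate the tangent-cone ideal (via the height/primality argument, combined with Proudfoot--Speyer so the degree-one part is spanned by the degree-one Gröbner basis elements) makes explicit a justification the paper leaves implicit, but the underlying argument is the same.
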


\begin{proof}
    The tangent space is cut out by the linear forms in the ideal of
    the tangent cone. From the initial forms in the proof of
    Theorem~\ref{thm:tangent_cone}, we see that $\initForm(h_v)$ is
    linear whenever $|{\rm supp}(v) \cap J^c| \le 2$.
    If $C \subseteq J$, then $\initForm(h_v)$  corresponds to a circuit of $A_J
    \diag(p_J)^2$. Otherwise, the two elements of $C \backslash J$ are
        parallel in the contraction $A/J$ and the corresponding circuit polynomial is linear.
  \end{proof}

This is closely related to \cite[Thm.~2.3]{SchT}, which investigates the quadratic 
component of the ideal $I(\rL)$.
Our discussion shows that the dimension of the tangent space is constant on each stratum of $\rL$.
We obtain the following characterization of the singular locus of 
the reciprocal plane $\rL$. 

\begin{corollary}\label{thm:singLA}
    The singular locus of the reciprocal plane $\rL$ is pure of
    codimension $2$. It is the union of all boundary strata
    $\rL_J$ such that the contraction $M(A/J)$ is a non-basic matroid.
\end{corollary}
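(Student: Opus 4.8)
The plan is to read off $\sing(\rL)$ stratum by stratum from the explicit description of the tangent spaces of $\rL$ in Corollary~\ref{cor:cotangent} (equivalently, from the tangent cones of Theorem~\ref{thm:tangent_cone}), and then to identify which of the resulting closed strata are the irreducible components. Recall that $\rL$ has pure dimension $d$ and, by Proudfoot--Speyer, is stratified by the flats of $M(A)$, so any $p \in \rL$ has support $J = \supp(p)$ a flat, and $p$ is a smooth point exactly when $\dim T_p\rL = d$. By Corollary~\ref{cor:cotangent}, $T_p\rL$ is the orthogonal complement of the span of the circuits of $A_J\diag(p_J)^2$ together with the size-$2$ circuits of $A/J$. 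These two families are supported on the complementary coordinate blocks $J$ and $J^c$; the first spans a space of dimension $|J| - \rk(J)$ (the corank of $A_J$, since $\diag(p_J)$ is invertible), the second a space of dimension $|J^c| - \gamma$, where $\gamma$ is the number of parallel classes of $M(A/J)$. Hence $\dim T_p\rL = \rk(J) + \gamma$, which equals $d$ if and only if $\gamma = d - \rk(J) = \rk\bigl(M(A/J)\bigr)$, i.e. if and only if $M(A/J)$ is basic (by Proposition~\ref{rmk:MuOne} and the definition of basic preceding Corollary~\ref{cor:basic}). Thus $p$ is singular precisely when $M(A/\supp(p))$ is non-basic.

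Since this depends only on $J = \supp(p)$, we already get $\sing(\rL) = \bigcup\{\rL\cap\PP^J : M(A/J)\text{ non-basic}\}$. To rewrite it as a union of the closed strata $\rL_J$, note that for flats $J' \subseteq J$ the matroid $M(A/J)$ is a contraction of $M(A/J')$ (contract further by the image of $J$), and contracting a basic matroid by a flat quotients out part of a basis and leaves the rest a basis, hence stays basic. Contrapositively, $M(A/J)$ non-basic forces $M(A/J')$ non-basic for every flat $J'\subseteq J$, so the entire closed stratum $\rL_J = \{p \in \rL : \supp(p)\subseteq J\}$ consists of singular points whenever $M(A/J)$ is non-basic. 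This yields $\sing(\rL) = \bigcup_{M(A/J)\text{ non-basic}}\rL_J$, the asserted description.

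For purity, observe that $\rL_J$ is irreducible of codimension $\rk\bigl(M(A/J)\bigr)$ in $\rL$, and $\rL_{J'}\subseteq\rL_J$ if and only if $J'\subseteq J$; so the irreducible components of $\sing(\rL)$ are the $\rL_J$ with $J$ maximal among flats for which $M(A/J)$ is non-basic. A non-basic matroid has rank $\ge 2$, giving codimension $\ge 2$, and the real content of ``pure of codimension~$2$'' is that such a maximal $J$ can never have $\rk\bigl(M(A/J)\bigr)\ge 3$. Equivalently, I need the combinatorial fact: \emph{every non-basic matroid $N$ of rank $\ge 3$ has a flat $F$ with $1\le\rk(F)<\rk(N)$ such that $N/F$ is again non-basic} --- lifting such an $F$ to a flat $J'\supsetneq J$ of $M(A)$ then contradicts maximality. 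I would prove this by induction on $\rk(N)$: choose an atom $a$; either $N/a$ is non-basic and we are done with $F=a$, or $N/a$ is basic for \emph{every} atom $a$, which means each upper interval $[a,\hat 1]$ is Boolean, equivalently every atom lies on exactly $\rk(N)-1$ rank-$2$ flats. In rank $3$ this is impossible by a short incidence count --- ``every point lies on exactly two lines'' forces exactly three parallel classes, so $N$ is basic --- and for rank $\ge 4$ the hypothesis descends to the contractions $N/G$ by corank-$3$ flats, reducing to the rank-$3$ case and forcing every corank-$2$ flat of $N$ to lie in exactly two hyperplanes, from which a geometric-lattice argument forces $N$ to be Boolean, again a contradiction. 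The one genuinely delicate point of the whole proof is this last implication, ``all upper intervals $[a,\hat 1]$ Boolean $\Rightarrow$ the geometric lattice is Boolean''; everything else follows formally from Corollary~\ref{cor:cotangent}, Proposition~\ref{rmk:MuOne}, and the Proudfoot--Speyer stratification.
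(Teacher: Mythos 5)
Your derivation of the singularity criterion from Corollary~\ref{cor:cotangent} is exactly the paper's argument (the codimension of $T_p\rL$ is $|J| - \rk A_J + |J^c| - \gamma$, so $\dim T_p\rL = \rk(A_J) + \gamma \geq d$ with equality iff $M(A/J)$ is basic), and your reduction to closed strata via ``contractions of basic matroids are basic'' is a sensible way to make explicit what the paper leaves implicit.

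For purity, however, you take a genuinely different route and leave a gap there. The paper proves the key combinatorial fact --- a non-basic matroid of rank $\ge 3$ has an element $e$ with $M/e$ non-basic --- by restricting to a simple submatroid of rank $r$ on $r+1$ elements (one per parallel class plus one extra, containing a basis) and then casing on connectivity: a disconnected such matroid keeps a non-basic direct summand after contracting in the other summand, and a connected one is $U_{r,r+1}$ whose contraction is $U_{r-1,r}$, again non-basic. Your proposal instead goes through the contrapositive ``$[a,\hat 1]$ Boolean for all atoms $a$ implies $L$ Boolean'', and the final step --- ``a geometric-lattice argument forces $N$ to be Boolean'' --- is asserted rather than proved; you flag it yourself as ``the one genuinely delicate point.'' That step is in fact true and not hard to close (induct on the rank: by the inductive hypothesis every hyperplane $H$ has $[\hat 0, H]$ Boolean, hence exactly $r-1$ atoms, so every $r$-subset of atoms is independent and the simple matroid underlying $L$ is uniform; but $[a,\hat 1]$ in $U_{r,k}$ is $U_{r-1,k-1}$, Boolean only if $k = r$), but as written the argument is incomplete precisely where the combinatorial content lives. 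The paper's deletion-to-corank-one reduction sidesteps the lattice-theoretic detour entirely and is both shorter and self-contained; your version, once the missing lemma is supplied, is also correct and arguably illuminates the upper-interval structure of $L(M)$ more directly.
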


\begin{proof}
    A point $p \in \rL$ is smooth if and only if the codimension of the
    tangent space equals $\,\codim(\rL)= n-d$. The description of the
    tangent space in terms of the matroids $M(A_J)$ and $M(A/J)$ in Corollary~\ref{cor:cotangent} 
    shows  that its codimension is $\,|J| - \rk A_J + {\rm Par}(A/J)\,$ where
    ${\rm Par}(A/J)$ is the  dimension of the space of $2$-circuits of $A/J$.
    Suppose $M(A/J)$ has $r$ distinct $1$-flats (or lines), and let 
     $\lambda_1,\dots,\lambda_r$ be the sizes of these parallelism classes.
    The circuits of each parallelism class span a linear space of
    dimension $\lambda_i - 1$. As these circuits are disjoint, we have
    ${\rm Par}(A/J) = \sum_{i=1}^r (\lambda_i - 1) = |J^c| - r$. The number of parallelism
    classes of $M(A/J)$ is at least $\rk(A/J)$. Thus 
    the codimension of the tangent space is $\le n - d$, and equality holds if
    and only if $M(A/J)$ is basic (cf.~Proposition~\ref{rmk:MuOne}).

    Finally, to see that the singular locus is pure of codimension $2$, we
    note that if $M$ is any non-basic matroid of rank $r \ge 3$, then there is
    an element $e$ such that $M/e$ is non-basic. To show this, we can assume
    that  $M$ is non-basic on $r+1$ elements, each representing a different line.
         If $M = M_1 \oplus M_2$ is not
    connected and $M_1$ is non-basic then any $e \in M_2$ will work.
    Otherwise, $M$ is a uniform matroid and the contraction is uniform
     of rank $r -1 \ge 2$ on $r$ elements. 
     By Example \ref{ex:uniform},     the  uniform
    matroid $U_{n,d}$ is non-basic if and only if $n > d > 1$.  Therefore,
    if $J$ is a flat of $M(A)$ such that $M(A/J)$ is non-basic of rank $\ge
    3$, then there is a flat $J^\prime \supset J$ such that $M(A/J^\prime)$ is
    non-basic. 
\end{proof}

\section{Ramification Locus}\label{sec:ramification}

The entropic discriminant describes the locus of points ${\bf b} \in
\PP^{d-1}$ such that the zero-dimensional scheme defined by the constraints
${\bf x} \in \rL$ and $A {\bf x} = {\bf b}$ is not reduced. Equivalently, the
entropic discriminant is the defining polynomial of the branch locus of the
map $\,A:\L^{-1}\rightarrow \mathbb{P}^{d-1}$.  We begin with the observation
that this map has no base points and is hence a projective morphism.

\begin{lemma} \label{lem:nobase}
    The variety $\rL$ is disjoint from the center of the projection $A :
    \mathbb{P}^{n-1} \dashrightarrow \mathbb{P}^{d-1}$.
\end{lemma}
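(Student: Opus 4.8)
The plan is to show that the center of the projection $A : \PP^{n-1} \dashrightarrow \PP^{d-1}$, which is the linearly embedded subspace $\PP(\ker A)$, is literally orthogonal to $\rL$. Writing $\L = \rowspan(A)$, we have $\ker A = \L^{\perp}$ with respect to the standard symmetric bilinear form $\langle \x,\y\rangle = \sum_i x_i y_i$ on $\C^n$, so the assertion is equivalent to $\rL \cap \PP(\L^{\perp}) = \emptyset$. On the dense torus $T$ this is immediate: a point $v = u^{-1}$ with $u \in \L \cap (\Cs)^n$ satisfies $\langle v,u\rangle = \sum_i \tfrac{1}{u_i}\,u_i = n \neq 0$, hence does not lie in $\L^{\perp}$. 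So the only thing left to rule out is that a boundary point of $\rL$ falls in the center, and this is where the Proudfoot--Speyer stratification by flats enters.

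First I would take an arbitrary $p \in \rL$ and set $J = \supp(p)$. Since $p$ represents a point of $\PP^{n-1}$, the set $J$ is nonempty, and by the stratification of $\rL$ by flats, $J$ is a flat and $p$ lies in the stratum $\rL \cap \PP^J \cong \rL_J$, where it sits in the dense torus $(\Cs)^J$. From the definition of the reciprocal plane $\rL_J$ -- coordinatewise inversion being an automorphism of $(\Cs)^J$ -- this forces $p_J^{-1} := (1/p_j)_{j\in J}$ to lie in $\rowspan(A_J)$; this is exactly the relation invoked in the proof of Theorem~\ref{thm:tangent_cone}. On the other hand, if $p$ were also in the center, i.e.\ $Ap = 0$, then, because $p$ is supported on $J$, we would have $A_J\,p_J = 0$, so $p_J \in \ker(A_J) = (\rowspan A_J)^{\perp}$.

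Combining these two facts, $p_J$ and $p_J^{-1}$ would be orthogonal vectors in $\C^J$, whence
\[
0 \;=\; \bigl\langle\, p_J,\; p_J^{-1}\,\bigr\rangle \;=\; \sum_{j\in J} p_j\cdot\frac{1}{p_j} \;=\; |J|,
\]
which is absurd since $J \neq \emptyset$. Therefore $\rL$ meets no point of $\PP(\ker A)$, and the restriction $A : \rL \to \PP^{d-1}$ is a morphism with no base points. The main (and essentially the only) obstacle is the passage from the torus to the boundary of $\rL$: once one knows, via Proudfoot--Speyer, that every point of $\rL$ has flat support $J$ with reciprocal in $\rowspan(A_J)$, the remaining step is the trivial identity $\langle p_J, p_J^{-1}\rangle = |J| > 0$, which no vector of $\L^{\perp}$ can satisfy.
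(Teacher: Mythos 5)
Your proof is correct and is essentially identical to the paper's: both hinge on the fact that a point $p\in\rL$ with support $J$ satisfies $p_J^{-1}\in\rowspan(A_J)$, and both derive a contradiction from the identity $\langle p_J, p_J^{-1}\rangle=|J|\neq 0$ when $p\in\ker(A)$. The only cosmetic difference is that you invoke the Proudfoot--Speyer stratification explicitly to justify $p_J^{-1}\in\rowspan(A_J)$, while the paper writes $p_J^{-1}=zA_J$ directly and leaves that step implicit.
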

\begin{proof}
    Our claim states that  $\,  \L^{-1}  \cap  \ker(A) =  \{ 0 \}$ holds in
    $\C^n$.  Let $p$ be a vector in $\,  \L^{-1}  \cap  \ker(A)$ and  $J =
    \supp(p)$.  Then  $p_J^{-1} =  z\,A_J$ for some $z \in \C^d$, and $p \in
    \ker(A)$ implies $\, 0  = z \,(A \,p)= (z\,A)\,p = \sum_{j\in J}
    {p_j}^{-1}p_j = |J|$.  It follows that $J  = \emptyset$ and $p = 0$.
\end{proof}

We now focus on the {\em ramification locus} of the dominant projective
morphism $\,A:\L^{-1}\rightarrow \mathbb{P}^{d-1}$.  By definition, this is
the   Zariski closure of the set of regular points $p\in \rL$ for which
\begin{equation}\label{eq:ram} 
    \L \ + \ \rowspan\,\mathrm{Jac}(\L^{-1})(p) \ \not= \ \C^n. 
\end{equation} 
Here $\mathrm{Jac}(\L^{-1})$ is the Jacobian matrix of $\L^{-1}$, whose row
vectors are $\nabla h_v(\x)$ for $v\in \mathcal{C}(A)$, as in
\eqref{eq:circuitpoly}.  This condition states that the intersection of $\rL$
and $\{\x : A\x = Ap\}$ is not transverse at $p$.

The {\em ramification scheme} $\,\sR_A \,=\, \mathrm{Proj}(\C[\x]/ J_A) \,$ is
defined by the following ideal in $\C[x_1,\ldots,x_n]$:
\begin{equation}\label{eq:RA}
J_A \; = \; \bigl( I(\L^{-1}) \,+\, \bigl\langle n\times n\text{ minors of }\begin{pmatrix}A\\ \mathrm{Jac}(\L^{-1})\end{pmatrix}\bigl\rangle \bigr) :  
\bigl\langle \,  (n{-}d)\times (n{-}d)\text{ minors of }\mathrm{Jac}(\L^{-1}) \,\bigr\rangle^\infty.
\end{equation}
By the Zariski-Nagata Purity Theorem \cite{Nag}, the ramification locus is
pure of codimension~$1$ in $\L^{-1}$.  Hence the ramification scheme $\sR_A$
is either empty or has codimension~$1$ in $\L^{-1}$.  The former happens when
$A$ is basic, and the latter happens when $A$ is non-basic.  We prove in
Section \ref{sec:hopefully} that $\sR_A$ contains the singular locus of
$\L^{-1}$ and hence that the saturation  step in \eqref{eq:RA} is redundant.

\begin{definition} \label{def:ED} 
    Let $A \in \R^{d \times n}$ be a non-basic matrix of rank $d$.  The
    \emph{ramification cycle} is the algebraic cycle of dimension $d-2$ in
    $\PP^{n-1}$ defined by the ramification scheme $\sR_A$.  By
    Corollary~\ref{cor:basic}, the push-forward of the ramification cycle
    under the morphism $A : \mathcal{L}^{-1} \rightarrow \PP^{d-1}$ is a cycle
    of codimension $1$. We define the {\em entropic discriminant}  of $A$ to
    be the homogeneous polynomial $H_A(\b)$ that represents this cycle in
    $\PP^{d-1}$. 
    It is unique up to multiplication by a
    non-zero constant.
\end{definition}

The following example shows that the ramification cycle may not be reduced.

\begin{example} 
Let $A$ be the matrix  in Example~\ref{ex:zweivier}. For $a\neq0,2,3$,
the prime ideal of $\L^{-1}$ equals
$$  I(\L^{-1})\,\, =\,\, \langle  \; 2x_1x_2-3x_1x_3+x_2x_3,\, 2x_1x_2-ax_1x_4+(a-2)x_2x_4, \,
3x_1x_3-ax_1x_4+(a-3)x_3x_4 \;\rangle.
$$
The ramification ideal $J_A$ is the sum of $I(\mathcal{L}^{-1})$ and the ideal of
 $4\times 4$ minors of the matrix 
\[\begin{pmatrix} A \\ \mathrm{Jac}(\L^{-1})\end{pmatrix} \;\;\;\;\;= \;\;\;\;\; 
\begin{small}
\begin{pmatrix}
1& 1& 1& 1\\ 
0& 2& 3& a\\ 
2x_2-3x_3& 2x_1+x_3& -3x_1+x_2& 0\\
 2x_2-ax_4& 2x_1-(a-2)x_4& 0& -ax_1+(a-2)x_2\\ 
3x_3-ax_4& 0& 3x_1+(a-3)x_4& -ax_1+(a-3)x_3\\ 
0& x_3-(a-2)x_4& x_2+(a-3)x_4& -(a-2)x_2+(a-3)x_3
\end{pmatrix}\end{small}.\]
The ramification cycle is
a zero-dimensional cycle of degree $4$ in $\PP^3$.
For the special value $a=6$, it is twice the reduced cycle of degree $2$
defined by $ \langle 2x_2-3x_3+6x_4,2x_1-x_3+4x_4,x_3^2-4x_3x_4+8x_4^2 \rangle $.
The push-forward of this cycle under $\PP^3 \dashrightarrow \PP^1$ 
is defined by the binary quartic in Example~\ref{ex:zweivier}.
 \hfill$\diamond$
 \end{example}

Since the projection $A : \PP^{n-1} \dashrightarrow \PP^{d-1}$ has no base
points on the subscheme $\sR_A$ (by Lemma \ref{lem:nobase}), the
push-forward by $A$ preserves the degree of the ramification cycle. Thus, in
order to establish the degree formula in Theorem \ref{thm:intro}, it suffices
to show that the degree of $\sR_A$ equals  $\,2 (-1)^d(d \chi(0) + \chi'(0))$.
In order to compute its degree, we use a slightly different description of
$\sR_A$.  Let $T$ denote the dense torus $\{x_1 x_2 \cdots x_n \not= 0\}$ in
the projective space $\PP^{n-1}$. Inside $T$,  the variety $\L^{-1}$ is a
complete intersection. Namely, it is defined by $B\cdot\x^{-1}=0$, where $B =
(B_1, \hdots , B_n)$ is a {\em Gale transform} for $A$, that is, an $(n-d)\times
n$-matrix whose rows span the kernel of $A$. Consider the polynomial
\[ g_A(\x) \;\;\; = \;\;\; 
\det \begin{pmatrix}
\multicolumn{3}{c}{A}\\
B_1x_1^{-2} &\!\! \cdots \!\! &B_nx_n^{-2} 
\end{pmatrix}\cdot \prod_{i=1}^nx_i^2 
\;\;\;=\;\;\;\det \begin{pmatrix}
A_1x_1^{2} &\cdots &A_nx_n^{2} \\
\multicolumn{3}{c}{B}
\end{pmatrix}.
 \]
 The $n \times n$-matrix above now plays the same role as the Jacobian matrix did in (\ref{eq:RA}).
Thus the hypersurface defined by $g_A(\x) = 0$ inside $\L^{-1} \cap T$ is the restricted
ramification locus $\,\sR_A \cap T$.

If $g_A$ is zero at a point $p \in T$ then the 
intersection $\ker(A \diag(p)^2) \cap \ker(B)$ contains a non-zero vector.
The kernel of $B$ is spanned by the rows of $A$, 
so the $d\times d$-matrix  $A\diag(p)^2A^T$ also drops rank.
Hence $g_A(\x)$ divides ${\rm det}(A\diag(\x)^2A^T)$. Both polynomials
have the same degree $2d$,  and hence they are equal (up to a scalar,
which we ignore). Using the 
Cauchy-Binet Formula, this gives
\begin{equation}\label{eq:gA2}
g_A(\x) \;\; = \;\; \det( A \diag(\x)^2 A^T) \;\; = \;\; 
\sum_{I\in\binom{[n]}{d}}\det(A_I)^2\prod_{i\in I}x_i^2.
\end{equation}
We next define
 similar  polynomials that cut out $\sR_A$ on the non-singular boundary strata of $\L^{-1}$.
 Let $J \subset [n]$ be any proper flat of rank $r$ in $M(A)$ and set
${\bf x}_J = (x_j\;:\; j\in J)$. Let 
 $\hat{A}_J$ now denote any $r\times |J|$ submatrix of $A_J=(A_j\;:\;j\in J)$
whose rows are linearly independent. We define
\begin{equation}\label{eq:gAJ}
g_{A_J}(\x_J) \;\; = \;\; \det( \hat{A}_J \diag(\x_J)^2 \hat{A}_J^T) \;\; = \;\; 
\sum_{I\in\binom{J}{r}}\det( \hat{A}_I)^2\prod_{i\in I}x_i^2.
\end{equation}
Here $\hat{A}_I$ denotes the square submatrix of $\hat{A}_J$
induced on the $r$ columns indexed by $I \subset J$.

\begin{lemma}\label{thm:RAnonsing}
 Let $p$ be a smooth point on the reciprocal plane $\rL$ with $\supp(p)=J$.  Then
 the ramification locus $\sR_A$ contains the point $p$ if and only if $g_{A_J}(p_J) = 0$. 
 \end{lemma}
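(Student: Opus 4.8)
The plan is to translate the ramification condition at the smooth point $p$ into a statement about the tangent space $T_p\rL$, then substitute the product description of the tangent cone provided by Theorem~\ref{thm:tangent_cone}, and finally recognize the vanishing of $g_{A_J}(p_J)$ as a determinantal degeneracy. Throughout I identify $\rL$ with its affine cone in $\C^n$ and use the coordinate splitting $\C^n = \C^J \times \C^{J^c}$.

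\emph{Step 1: ramification in terms of the tangent space.} By the Proudfoot--Speyer theorem the circuit polynomials $\{h_v : v \in \Circuits(A)\}$ generate the prime ideal $I(\rL)$, so at the smooth point $p$ the rows of $\mathrm{Jac}(\rL^{-1})(p)$ span the conormal space $(T_p\rL)^\perp$ (of dimension $n-d$). Since $\L^\perp = \ker A$ and the orthogonal complement is involutive, the condition \eqref{eq:ram}, namely $\L + \rowspan\,\mathrm{Jac}(\rL^{-1})(p) \neq \C^n$, is equivalent to $\ker A \cap T_p\rL \neq \{0\}$, that is, to $A$ failing to be injective on the $d$-dimensional space $T_p\rL$. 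This last condition is closed on the smooth locus $\rL^{\mathrm{sm}}$ (it is the locus where the composition $T_p\rL \hookrightarrow \C^n \xrightarrow{A} \C^d$ drops rank), and $\sR_A$ is by definition the Zariski closure of the set where it holds; hence for the smooth point $p$ one has $p \in \sR_A$ precisely when $\ker A \cap T_p\rL \neq \{0\}$. It remains to show this is equivalent to $g_{A_J}(p_J)=0$.

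\emph{Step 2: killing the contracted directions.} As $p$ is smooth, $T_p\rL$ coincides with the tangent cone, and by Theorem~\ref{thm:tangent_cone}, in the form established in its proof, we have $T_p\rL = \L_J(p) \times \rL_{A/J}$ inside $\C^n = \C^J \times \C^{J^c}$, where $\L_J(p) = \rowspan\bigl(A_J\,\diag(p_J)^2\bigr) \subseteq \C^J$. Let $q \colon \C^d \to \C^d/\mathrm{span}(A_J)$ be the quotient; on vectors supported in $J^c$ the composite $q \circ A$ is exactly the contracted map $A/J$, while it annihilates vectors supported in $J$. Thus if $w = (w_J, w_{J^c}) \in \ker A \cap T_p\rL$, then $0 = q(Aw) = (A/J)(w_{J^c})$; since $w_{J^c}$ is a point of $\rL_{A/J}$ and $J$ is a flat (so $A/J$ has no zero columns and full row rank), Lemma~\ref{lem:nobase} applied to $A/J$ forces $w_{J^c} = 0$. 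Therefore $\ker A \cap T_p\rL = \{(w_J, 0) : w_J \in \L_J(p),\ A_J w_J = 0\}$, which is nonzero precisely when $\L_J(p) \cap \ker A_J \neq \{0\}$ inside $\C^J$.

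\emph{Step 3: the determinant.} Let $\hat A_J$ be a full-row-rank $r \times |J|$ submatrix of $A_J$, so that $\rowspan \hat A_J = \rowspan A_J$, $\ker \hat A_J = \ker A_J$, and $\hat A_J^{T}$ is injective; then $\L_J(p) = \rowspan\bigl(\hat A_J\,\diag(p_J)^2\bigr)$ is the set of vectors $\diag(p_J)^2\hat A_J^{T}c$ with $c \in \C^r$, and such a vector is a nonzero element of $\ker A_J$ exactly when $c \neq 0$ and $\hat A_J\,\diag(p_J)^2\hat A_J^{T}c = 0$. Hence $\L_J(p) \cap \ker A_J \neq \{0\}$ if and only if $\det\bigl(\hat A_J\,\diag(p_J)^2\hat A_J^{T}\bigr) = 0$, i.e.\ $g_{A_J}(p_J) = 0$ by \eqref{eq:gAJ}. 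Chaining the three equivalences proves the lemma. The step needing genuine care is Step~2: one must use that at a smooth point the tangent cone \emph{is} the honest product $\L_J(p) \times \rL_{A/J}$ in these coordinates, so the kernel computation splits cleanly, and it is Lemma~\ref{lem:nobase}, reapplied to the contraction $A/J$, that removes the unwanted component in $\C^{J^c}$; everything else is linear algebra.
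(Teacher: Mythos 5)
Your proof is correct and is essentially the detailed version of the paper's two-line argument. The paper asserts that ``the condition \eqref{eq:ram} reduces to $\ker(A_J)\cap\ker(\mathrm{Jac}(\rL_J))\neq\{0\}$'' and then invokes the determinantal argument preceding \eqref{eq:gA2}; you make the reduction precise by (i) translating \eqref{eq:ram} into $\ker A\cap T_p\rL\neq\{0\}$ via orthogonal complements, (ii) splitting $T_p\rL=\L_J(p)\times\rL_{A/J}$ using the explicit description from the proof of Theorem~\ref{thm:tangent_cone}, and (iii) killing the $J^c$-component by applying Lemma~\ref{lem:nobase} to the contraction $A/J$ (valid since $J$ is a flat, so $A/J$ has no zero columns and full row rank). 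The final determinantal step matches the paper's appeal to the argument before \eqref{eq:gA2}. You could add, for completeness, that at a smooth $p$ the matroid $M(A/J)$ is basic (Corollary~\ref{thm:singLA}), which is what makes $\rL_{A/J}$ linear and the tangent cone an honest tangent space—but your argument already goes through without relying on that linearity, since the product structure and Lemma~\ref{lem:nobase} suffice.
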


\begin{proof}
Since $p$ is smooth,  the condition \eqref{eq:ram} reduces to
$\ker(A_J) \cap \ker({\rm Jac}(\rL_J)) \neq \{0\}$. From the argument 
prior to (\ref{eq:gA2}) we see that,
 for $p_J\in (\C^*)^J$, this is equivalent to $g_{A_J}(p_J)=0$.
\end{proof}

\begin{remark}
This characterization shows that the ramification locus $\sR_A$ 
equals the closure of its intersection with the torus, $\sR_A\cap T$.  
To see this, suppose that $\sR_A$ has some component $Z$ contained in the boundary of the torus 
$\{x_1\cdots x_n=0\}$.  Then $Z$ is contained in $\rL_J$ for some proper flat $J$, where 
$\dim(\rL_J) = {\rm rank}(A_J)-1$. Since $\sR_A$ is pure of codimension one in $\L^{-1}$, 
we see that $\dim(Z)=d-2$. It follows that ${\rm rank}(A_J) = d-1$ and $Z=\rL_J$. 
However, $M(A/J)$ has rank $1$ and is therefore basic.  Lemma~\ref{thm:RAnonsing} then
tells us that $\rL_J$ is not contained in $\sR_A$. 
This shows that to define the ideal $J_A$ in \eqref{eq:RA}, 
we could instead saturate with respect to the ideal $\langle x_1 x_2 \cdots x_n \rangle$.
\end{remark}

We shall now use the polynomial $g_A(\x)$ to compute the degree of the ramification cycle.

\begin{proof}[Proof of Theorem \ref{thm:intro}]
Let $A$ be a non-basic real $d \times n$-matrix  of rank $d$ 
and $\chi(t)$ the characteristic polynomial of the matroid $M(A)$.
We shall prove that the degree of the algebraic cycle underlying the $(d-2)$-dimensional
subscheme $\sR_A$ of $\PP^{n-1}$ equals the matroid invariant (\ref{eq:deg}).
Lemma \ref{lem:nobase} then implies that $H_A(\b)$ has the same degree, and this will
complete the proof of Theorem \ref{thm:intro}.

From above, we know that the scheme $\sR_A$ is contained in the hypersurface 
$\{g_A = 0\}$ of $\PP^{n-1}$. Let $\widehat{\sR}_A$ denote the
scheme-theoretic intersection of the reciprocal plane with this hypersurface:
\begin{equation}\label{eq:bigintersection}
 \widehat{\sR}_A \;\; = \;\; \mathrm{Proj} \bigl(\, \C[\x] \,/\, (I(\L^{-1})+\langle g_A \rangle )\,\bigr).
 \end{equation}
 The $(d-2)$-dimensional scheme  $\widehat{\sR}_A$ is the intersection of
   the $(d-1)$-dimensional irreducible variety $\mathcal{L}^{-1}$
   and the hypersurface $g_A$.
   By B\'ezout's Theorem  \cite[Thm. 1.4.4]{vogel}, its degree equals
 \begin{equation}
 \label{eq:bezout}
  \deg \bigl( \widehat\sR_A \bigr)  \quad = \quad 
    \deg(g_A)\cdot \deg(\L^{-1}). 
    \end{equation}
    
We claim that $ \widehat{\sR}_A$ decomposes into $\#{\rm Hyp}(A)+1$ components of dimension
$d-2$, one of which is $\sR_A$. Here ${\rm Hyp}(A)$ denotes the set
of  hyperplane flats, that is, flats $J$ such that $\rk(A_J)=d-1$. 
We see that $\sR_A$ and $\widehat{\sR}_A$ agree in the torus $T$, so their difference must
lie in the coordinate hyperplanes. 
Recall from Section~\ref{sec:reciprocalplanes} that the reciprocal plane 
intersects the dense torus $T^J $ of $ \PP^J$ if and only if $J$ is a flat, and if so,  the closure of that intersection
is the reciprocal plane $\rL_J$. Such a stratum has dimension $d-2$ in $\PP^{n-1}$ if and only if $J$ is 
a hyperplane flat. Since $J\in \rm{Hyp}(A)$ does not contain 
a basis of $M(A)$, each summand in the formula \eqref{eq:gA2} for
$g_A$  vanishes on $T^J$. To be precise, $g_A$ vanishes 
to order exactly $2$ on the torus $T^J$, since
$J$ is only one element away from containing a basis.

Furthermore,   the strata $\rL_J$ are not contained in $\sR_A$ for $J\in \rm{Hyp}(A)$.
This follows from Lemma~\ref{thm:RAnonsing}. Indeed,
 by Corollary~\ref{thm:singLA}, the points in $\L^{-1}\cap T^J$ are non-singular in $\L^{-1}$,
 and hence the polynomial $g_{A_J}(\x_J)$ is not identically zero on $\rL_J$.
  We conclude that the irreducible varieties
   $\rL_J$, for $J\in \rm{Hyp}(A)$, are  components
   of dimension $d-2$ and multiplicity $2$ in the scheme $\widehat{\sR}_A$.
   
   We have derived the following equidimensional decomposition of the cycle
   defined in (\ref{eq:bigintersection}):
   \begin{equation}\label{eq:bigint2}
 \widehat{\sR}_A \;\;= \;\;\sR_A \;\cup\; \left( \bigcup_{J\in{\rm Hyp}(A)}
 2\cdot \rL_J \right).
\end{equation}
Since the degree is additive on equidimensional cycles, we can use (\ref{eq:bezout}) to conclude that
\begin{equation}
\label{eq:degsRA}
    \deg(\sR_A)  \quad = \quad 
    \deg(g_A)\cdot \deg(\L^{-1}) \;-\; 2\!\!\!\sum_{J \in {\rm Hyp}(A)}\!\!\!
    \deg(\rL_J) \quad = \quad
    2d \cdot \mu(A) \;-\; 2\!\!\!\sum_{J\in {\rm Hyp}(A)}\!\!\!\mu(A_J).
\end{equation}
The coefficient of $t^i$ in the characteristic polynomial 
$\chi(t)$ equals $(-1)^{d-i}$
times the sum of the M\"obius invariants  $\mu(A_J)$ 
where $J$ runs over all flats of rank $d-i$.
For $i = 0$ this gives
$ \mu(A) = (-1)^d  \chi(0) $, and for $i=1$
we get $\,\sum_{J\in {\rm Hyp}(A)}\! \mu(A_J) =  (-1)^{d-1} \chi^\prime(0)$.
 Hence the right hand side of (\ref{eq:degsRA}) equals the
 desired matroid invariant \eqref{eq:deg}.   This completes the proof of
   Theorem \ref{thm:intro}.
     \end{proof}

The decomposition (\ref{eq:bigint2}) can be used to compute
the ideal of the ramification scheme. Namely, since all
hyperplane strata $\rL_J$ lie in 
    complement    of the torus $T$, we have the algebraic identity
 \begin{equation}
 \label{eq:computeRA}
  J_A \,\, = \,\, \bigl( I(\L^{-1}) + \langle \,g_A \,\rangle \bigr) :
    \langle x_1 x_2 \cdots x_n \rangle^\infty. 
  \end{equation}
We illustrate the identity
(\ref{eq:computeRA}) and our proof of Theorem \ref{thm:intro} for the codimension $1$ case.

\begin{example} \label{ex:directA} 
Let $A$ be the matrix in equation (\ref{eq:specialmatrix})
of Section \ref{sec:n=d+1}.
The reciprocal plane $\L^{-1}$ is the hypersurface
defined by the elementary symmetric polynomial $\,e_{n-1}(x_1,x_2,\ldots,x_n) $.
The equation (\ref{eq:gA2}) defining the ramification locus
in the torus is $ \,g_A = e_{n-1}(x_1^2,x_2^2,\ldots,x_n^2) $.
The scheme  $ \widehat{\sR}_A$ in (\ref{eq:bigintersection})
is the complete intersection of these two hypersurfaces. Its ideal has the primary decomposition
\begin{equation} \begin{matrix}
& \big\langle e_{n-1}(x_1,x_2,\ldots,x_n) ,  e_{n-1}(x_1^2,x_2^2,\ldots,x_n^2)
\big\rangle &  &  \\
= & \langle e_{n-1}(x_1,x_2,\ldots,x_n) , e_{n-2}(x_1,x_2,\ldots,x_n) \rangle
& \cap &  \bigcap_{1 \leq i < j \leq n} \langle\, x_i^2 , x_i+ x_j \rangle .
\end{matrix} \end{equation}
This is the decomposition  discussed after (\ref{eq:bezout}),
with the first intersectand being the ideal $J_A$ that defines $\sR_A$.
This ideal is contained in the Jacobian ideal of  the reciprocal plane $\L^{-1}$ because
$$ e_{n-2} \,\,\, = \,\,\, \frac{1}{2} \sum_{i=1}^n \frac{ \partial e_{n-1} }{ \partial x_i} .$$
This identity proves the ideal-theoretical inclusion $J_A \subset I({\rm Sing}(\L^{-1}))$.
We conclude that $\sR_A$ contains ${\rm Sing}(\L^{-1}) $ 
when $n= d+1$. As we shall see in
Theorem \ref{tm:main},
the inclusion  ${\rm Sing}(\L^{-1}) \subset \sR_A $ is always true, 
even if $n > d+1$. 
This inclusion implies, as argued in Corollary \ref{cor:reallylast}, 
that the real variety of $H_A(\b)$ is indeed the union of codimension $2$ planes
given  in (\ref{eq:d+1choose2}). \hfill $\diamond$
\end{example}

We close this section with
a combinatorial proof of the assertion, stated informally immediately after Theorem
\ref{thm:intro}, that generic matrices maximize the degree of the entropic
discriminant.

\begin{proposition} \label{prop:upperbound}
As $A$ ranges over all non-basic $d \times n$-matrices
of rank $d$, the degree of the entropic discriminant
$H_A(\b)$ attains its maximal value $2(n-d)\binom{n-1}{d-2}$ when
the matroid $M(A)$ is uniform.
\end{proposition}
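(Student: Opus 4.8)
The plan is to extract a formula for $\deg H_A(\b)$ that is a sum over flats of non-negative quantities, each of which is individually maximized by the uniform matroid, and then appeal to monotonicity. Concretely, the proof of Theorem~\ref{thm:intro} gave
\[
\deg(\sR_A) \;=\; 2d\cdot\mu(A) \;-\; 2\!\!\!\sum_{J\in{\rm Hyp}(A)}\!\!\!\mu(A_J),
\]
so I will instead work with the equivalent expression $2(-1)^d(d\chi(0)+\chi'(0))$ and rewrite it as $2\sum_{J}(\text{something}\ge 0)$ using the fact (recalled after (\ref{eq:degsRA})) that the coefficient of $t^i$ in $\chi_M(t)$ is $(-1)^{d-i}\sum_{\rk J = d-i}\mu(A_J)$, together with Rota's Sign Theorem, which makes every $\mu(A_J)$ a positive integer. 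First I would note that whitney's formula expresses $\mu(A_J)$ — equivalently each summand — in terms of counts of subsets with prescribed rank and cardinality in the matroid $M(A)$, e.g. via the Whitney rank generating function; in the uniform case $U_{d,n}$ these counts are binomial coefficients $\binom{n}{k}$, which are the \emph{largest} possible values of ``number of independent (or low-rank) $k$-subsets'' over all rank-$d$ matroids on $n$ elements.

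The key step is a \textbf{termwise comparison}. For a fixed matroid $M=M(A)$ on $n$ elements of rank $d$, and for each $k$, the number of independent $k$-subsets is at most $\binom{n}{k}$, with equality for all $k$ simultaneously precisely when $M$ is uniform. I would use this, via the broken-circuit complex interpretation of the coefficients of $\chi_M(t)$ (mentioned at the end of Section~\ref{sec:matroids}): writing $\chi_M(t)=\sum_{i}(-1)^{d-i}w_i\,t^i$ with $w_i\ge 0$ the number of faces of size $d-i$ in the broken-circuit complex, one has $w_i \le \binom{n-1}{d-i}$, since the broken-circuit complex of $U_{d,n}$ is the full $(d-1)$-skeleton of the simplex on $n-1$ vertices and deletion of broken circuits only removes faces. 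Then
\[
(-1)^d\bigl(d\chi(0)+\chi'(0)\bigr) \;=\; d\,w_0 - w_1 \;\le\; d\binom{n-1}{d}-\binom{n-1}{d-1},
\]
but this last bound is \emph{not} directly of the maximizing form, so here the argument needs care: $w_1$ appears with a minus sign, so I cannot simply bound $w_0$ and $w_1$ independently. The honest route is to go back to the $\mu(A_J)$ description and use $d\mu(A) - \sum_{J\in{\rm Hyp}}\mu(A_J) = \sum_{J\in{\rm Hyp}}(\mu(A)/|{\rm Hyp}| \cdot \text{count} - \ldots)$ — messy. A cleaner reformulation: by the deletion–contraction recursion $\chi_M(t) = \chi_{M\setminus e}(t) - \chi_{M/e}(t)$ for a non-loop non-coloop $e$, the quantity $\Psi(M) := (-1)^d(d\chi_M(0)+\chi_M'(0))$ can be shown to satisfy $\Psi(M) = \Psi(M\setminus e) + \Psi(M/e) + (\text{non-negative correction})$, by tracking how $d$, $\chi(0)$ and $\chi'(0)$ transform; I would set this up so that the "generic/free" choice at each step — keeping $e$ in general position — never decreases $\Psi$. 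Iterating down to $U_{d,n}$ yields the claimed maximum.

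\textbf{Main obstacle.} The delicate point is the sign: $\Psi(M)$ mixes $\chi(0)$ (which enters positively, weight $d$) with $\chi'(0)$ (negative weight $1$), so naive termwise monotonicity of matroid invariants is not enough — one must show that the \emph{combination} $d\,w_0-w_1$ is monotone under the relevant specialization. I expect the cleanest resolution is to interpret $d\,w_0 - w_1$ directly via the geometry already developed: $2d\,w_0 = \deg(g_A)\cdot\deg(\L^{-1})$ is the Bézout number, and $2w_1 = 2\sum_{J\in{\rm Hyp}(A)}\deg(\rL_J)$ is the total degree of the excess components in (\ref{eq:bigint2}). Passing from $M(A)$ to a generic perturbation can only increase $\deg(\L^{-1})=\mu(A)$ (the free extension/coextension adds bounded regions) while the number and degrees of hyperplane strata behave controllably; making this "only increases" precise — in particular ruling out that new hyperplane flats with large $\mu(A_J)$ appear faster than $\mu(A)$ grows — is the crux, and is handled by the deletion–contraction bookkeeping above, which forces the uniform matroid as the unique maximizer.
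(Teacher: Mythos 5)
Your final strategy does land where the paper does --- a deletion--contraction recurrence for the degree invariant with a non-negative correction term, followed by induction --- and you correctly diagnose why the na\"ive broken-circuit bound fails (the $w_1$ term enters with a minus sign, so bounding $w_0$ and $w_1$ separately is useless). So the approach is right. But you never actually close the argument: you write ``can be shown to satisfy $\Psi(M)=\Psi(M\setminus e)+\Psi(M/e)+(\text{non-negative correction})$'' and later call exactly this ``the crux,'' proposing to resolve it via a ``generic perturbation'' / free-extension argument that you do not carry out. That substitute is not needed and would not work as described --- there is no step-by-step perturbation that keeps $n$ and $d$ fixed and walks an arbitrary rank-$d$ matroid to $U_{d,n}$ while monotonically increasing $\Psi$; ``iterating down to $U_{d,n}$'' is not what deletion--contraction does, it decomposes $M$ into matroids on a smaller ground set.

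The missing computation is short. Writing $\delta(M)=2(-1)^{d}\bigl(d\,\chi_M(0)+\chi_M'(0)\bigr)$ and using $\chi_M=\chi_{M\setminus e}-\chi_{M/e}$ for a non-isthmus $e$, together with $\rk(M\setminus e)=d$ and $\rk(M/e)=d-1$, a sign-tracking computation gives
\[
\delta(M)\;=\;\delta(M\setminus e)\;+\;\delta(M/e)\;+\;2\,\mu(M/e),
\]
and $\mu(M/e)\ge 0$ by Rota's Sign Theorem. (The paper's displayed correction is $\mu(M/e)$; the correct factor is $2\mu(M/e)$ --- but positivity is all one needs.) The induction is then on $(d,\,n-d)$: $M\setminus e$ lives on $n-1$ elements with rank $d$, $M/e$ lives on $n-1$ elements with rank $d-1$, so the inductive hypothesis gives $\delta(M\setminus e)\le\delta(U_{d,n-1})$, $\delta(M/e)\le\delta(U_{d-1,n-1})$, $\mu(M/e)\le\mu(U_{d-1,n-1})=\binom{n-2}{d-2}$; summing and noting that for the uniform matroid the recursion becomes an identity, $\delta(U_{d,n})=\delta(U_{d,n-1})+\delta(U_{d-1,n-1})+2\mu(U_{d-1,n-1})$, yields $\delta(M)\le\delta(U_{d,n})$. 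Base cases: rank $1$ (all simple matroids are uniform) and corank $1$ (treated in Section~\ref{sec:n=d+1}). Your geometric interpretation of $2d\,w_0$ and $2w_1$ via the B\'ezout decomposition~(\ref{eq:bigint2}) is accurate and illuminating, but it is not a substitute for the induction above.
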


\begin{proof}
    It follows from Theorem~\ref{thm:intro} and Example~\ref{ex:uniform} that
    $2(n-d)\binom{n-1}{d-2}$ is  the degree of the entropic discriminant when
    $M=M(A)$ is uniform.  We must show that this number is a strict upper
    bound otherwise.    The claim is an entirely matroid-theoretic statement,
    and so let us define $\delta(M) = 2
    (-1)^{\rk(M)}(\rk(M) \chi_M(0) + \chi_M'(0))$ for all matroids $M$. The
    characteristic polynomial satisfies a deletion-contraction recurrence,
    namely, $\chi_M(t) = \chi_{M \backslash e}(t) - \chi_{M / e}(t)$ for $e
    \in M$ not an isthmus. It follows that the entropic degree satisfies a
    deletion-contraction recurrence plus a correction term:
    \[
        \delta(M) \ = \ \delta(M \backslash e) \ + \ \delta(M / e) \ + \
        \mu(M / e).
    \]
    All three terms on the right hand side are non-negative. The desired inequality follows
    by induction on the rank $d$ and corank $n-d$.      In rank $1$ all
    simple matroids are uniform.     Corank $1$ is dealt with in
    Section~\ref{sec:n=d+1}. The same argument shows that $\mu(M)
    \le \tbinom{n-1}{d-1}$ with equality if and only if $M = U_{d,n}$.
\end{proof}

\section{Real Issues}\label{sec:hopefully}

Our point of departure for this paper was the observation that,
for real $\b$, the equations (\ref{eq:opti2}) have only real solutions, namely,
the $\mu(A)$ analytic centers of the bounded regions
in the arrangement of $n$ coordinate hyperplanes
in $\{A \x = \b\} \simeq \R^{n-d}$. It is thus natural to ask 
what it means for two such analytic centers to collide,
and how this relates to the real points in 
the ramification locus and in the entropic discriminant.
We shall prove that the real loci of these two complex varieties are both pure of codimension one.
Our first step in this direction  is the following lemma.

\begin{lemma} \label{prop:ramsing}
All real points in the ramification scheme 
are singular in the reciprocal plane:
\begin{equation}
\label{eq:sRsing}
       (\sR_A)_\R \,\,\, \subseteq \,\,\, \sing(\rL)_\R .
\end{equation}
\end{lemma}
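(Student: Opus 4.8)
The plan is to reduce \eqref{eq:sRsing} to the strict positivity of the polynomials $g_{A_J}$ at real points, using the description of $\sR_A$ on the smooth strata provided by Lemma~\ref{thm:RAnonsing}. First I would note that $\sR_A$ is by definition a subvariety of $\rL$, so every $p \in (\sR_A)_\R$ lies on the reciprocal plane and, by the Proudfoot--Speyer stratification of $\rL$ by flats, its support $J := \supp(p)$ is a flat of the matroid $M(A)$; write $r = \rk(A_J)$. Since any point of $\sR_A$ that fails to be smooth on $\rL$ already lies in $\sing(\rL)$, it suffices to show that a \emph{smooth} real point $p \in \rL$ cannot lie in $\sR_A$.

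So suppose $p$ is such a smooth real point, with support $J$. By Lemma~\ref{thm:RAnonsing}, membership $p \in \sR_A$ would force $g_{A_J}(p_J) = 0$. But formula \eqref{eq:gAJ} expresses $g_{A_J}(\x_J) = \sum_{I \in \binom{J}{r}} \det(\hat{A}_I)^2 \prod_{i \in I} x_i^2$, where $\hat{A}_J$ is the chosen full-row-rank $r \times |J|$ submatrix of $A_J$ and $\hat{A}_I$ its square $r$-column submatrices; in particular this is a nonnegative-coefficient sum of the monomial squares $\prod_{i \in I} x_i^2$. Because $p$ is real and $p_j \neq 0$ for every $j \in J = \supp(p)$, each $p_j^2$ is strictly positive, so $g_{A_J}(p_J)$ is a sum of nonnegative real numbers; it vanishes only if $\det(\hat{A}_I) = 0$ for all $r$-subsets $I \subseteq J$, which contradicts $\rk(\hat{A}_J) = r$. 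Hence $p \notin \sR_A$, and the contrapositive gives $(\sR_A)_\R \subseteq \sing(\rL)_\R$.

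I do not expect a genuine obstacle: the whole content is carried by Lemma~\ref{thm:RAnonsing} together with the sum-of-squares shape of $g_{A_J}$ from \eqref{eq:gAJ}. The only point needing a little care is the bookkeeping that every real point of $\sR_A$ is accounted for — those that are smooth on $\rL$ are excluded by the argument above, while the rest are singular by definition — which is exactly the dichotomy (flat support, hence smooth or singular according to Corollary~\ref{thm:singLA}) already exploited in the Remark following Lemma~\ref{thm:RAnonsing}.
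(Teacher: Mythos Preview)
Your argument is correct and follows essentially the same route as the paper: use Lemma~\ref{thm:RAnonsing} to reduce the question on smooth points to the vanishing of $g_{A_J}(p_J)$, and then invoke the sum-of-squares formula \eqref{eq:gAJ} (and \eqref{eq:gA2} for $J=[n]$) to conclude that $g_{A_J}$ is strictly positive at any real point with support exactly $J$. The only cosmetic difference is that the paper treats the torus case $J=[n]$ first and then the boundary strata, whereas you handle both uniformly.
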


\begin{proof}
The sum of squares formula in (\ref{eq:gA2}) reveals 
that $g_A(\x) = 0$ has no real solutions in the torus $T$. In symbols,
$(\sR_A \cap T)_\R = \emptyset$. Likewise, for any flat $J$ with $\rL_J$ nonsingular in $\L^{-1}$,
the polynomial $g_{A_J}$ is a similar sum of squares, and hence
$(\sR_A \cap T^J)_\R = \emptyset$. Lemma~\ref{thm:RAnonsing} ensures that no
regular point of $\mathcal{L}^{-1}$ with real coordinates lies in the
ramification locus of the morphism $ A : \mathcal{L}^{-1} \rightarrow \PP^{d-1}$.
\end{proof}

The following is our main result in this section. We find 
 that the reverse inclusion  holds in~(\ref{eq:sRsing}).

\begin{theorem}\label{tm:main}
    The ramification scheme $\sR_A$ contains the singular locus of $\rL$, and we have
    \begin{equation}
\label{eq:sRsing2}
      (\sR_A)_\R \,\, = \,\, \sing(\rL)_\R . 
      \end{equation}
\end{theorem}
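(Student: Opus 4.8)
The plan is to prove the set-theoretic inclusion $\sing(\rL) \subseteq \sR_A$ first, since the real equality \eqref{eq:sRsing2} will then follow by combining it with Lemma~\ref{prop:ramsing}: the reverse inclusion $(\sR_A)_\R \subseteq \sing(\rL)_\R$ is already in hand, and $\sing(\rL) \subseteq \sR_A$ gives $\sing(\rL)_\R \subseteq (\sR_A)_\R$. So the entire content is the claim that every singular point of the reciprocal plane lies in the ramification scheme. By Corollary~\ref{thm:singLA}, $\sing(\rL)$ is the union of the boundary strata $\rL_J$ for which the contraction $M(A/J)$ is non-basic, and each such stratum has codimension $2$ in $\rL$; since $\sR_A$ is pure of codimension $1$ in $\rL$ (Zariski--Nagata purity), it suffices to check that $\rL_J \subseteq \sR_A$ for every flat $J$ with $M(A/J)$ non-basic.

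First I would fix such a flat $J$ of rank $r$ and work near a generic point $p \in \rL_J$, so that $\supp(p) = J$ and $p$ lies in no smaller stratum. The key tool is the tangent cone description from Theorem~\ref{thm:tangent_cone}: locally at $p$, $\rL$ looks like $\L_J(p) \times \rL_{A/J}$, where $\L_J(p)$ is the rowspan of $A_J\diag(p_J)^2$ and $\rL_{A/J}$ is the reciprocal plane of the contracted matroid $M(A/J)$, which sits at the origin. The point $p$ is a singular point of $\rL$ precisely because $\rL_{A/J}$ is singular at $0$ (equivalently $M(A/J)$ non-basic). I would then translate the ramification condition \eqref{eq:ram} at $p$ into a statement about this product: the map $A:\rL \to \PP^{d-1}$ restricted near $p$ factors through the linear projection, and transversality of $\rL \cap \{\x : A\x = Ap\}$ at $p$ fails exactly when the tangent cone $\mathrm{TC}_p\rL$ fails to meet $\ker(A)$ only at $0$ in the expected dimension — but more carefully, since $p$ is a singular point, the ramification scheme $\sR_A$ as defined by the saturated ideal $J_A$ in \eqref{eq:RA} needs the minor/Jacobian condition interpreted via the full Jacobian of $I(\rL)$ at $p$. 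The cleanest route is to show directly that every $(d-2)$-dimensional component of $\widehat{\sR}_A$ containing $\rL_J$ actually equals a component of $\sR_A$, or alternatively to verify the ideal membership: that $g_A$, together with $I(\rL)$, defines a scheme whose saturation with respect to $\langle x_1\cdots x_n\rangle$ (which equals $J_A$ by \eqref{eq:computeRA}) still contains $\rL_J$.

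Here is where I expect the main obstacle. The polynomial $g_A = \sum_{I}\det(A_I)^2\prod_{i\in I}x_i^2$ vanishes on $\rL_J$ whenever $J$ contains no basis, i.e.\ for every proper flat of rank $< d$, not merely the hyperplane flats — so $\widehat{\sR}_A$ a priori contains all these lower strata as well, and the decomposition \eqref{eq:bigint2} only accounts for the hyperplane flats because those are the components of the \emph{right} dimension $d-2$. For a flat $J$ with $M(A/J)$ non-basic, $\rL_J$ has dimension $r - 1 < d-2$, so it is \emph{not} a component of $\widehat{\sR}_A$ — it is contained in the bigger component $\sR_A$ or in one of the hyperplane strata $\rL_{J'}$ with $J' \supsetneq J$, $J'$ a hyperplane flat. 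So the real work is to rule out the latter: to show that $\rL_J$ is \emph{not} swallowed entirely by the union $\bigcup_{J'} 2\rL_{J'}$ over hyperplane flats, hence must lie in $\sR_A$. Concretely, a generic point $p$ of $\rL_J$ with $\supp(p) = J$ (not contained in any hyperplane stratum, since those strata correspond to strictly larger flats) lies in $\widehat{\sR}_A$ but in none of the $\rL_{J'}$; therefore it lies in $\sR_A$. This last point — that $\sR_A$ is exactly the closure of $\widehat{\sR}_A \setminus \bigcup_{J' \in \mathrm{Hyp}(A)}\rL_{J'}$, so any point of $\widehat\sR_A$ avoiding all hyperplane strata is in $\sR_A$ — follows from the proof of Theorem~\ref{thm:intro} and the primary decomposition logic there; I would spell this out and then conclude that $\rL_J \subseteq \overline{\sR_A} = \sR_A$ for every flat $J$ with $M(A/J)$ non-basic, which by Corollary~\ref{thm:singLA} is exactly $\sing(\rL) \subseteq \sR_A$. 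Combined with Lemma~\ref{prop:ramsing}, this gives \eqref{eq:sRsing2}.
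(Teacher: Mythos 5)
Your reduction is correct: by Lemma~\ref{prop:ramsing} it suffices to prove $\sing(\rL)\subseteq\sR_A$, and by Corollary~\ref{thm:singLA} this comes down to showing $\rL_J\subseteq\sR_A$ for each rank-$(d-2)$ flat $J$ with $M(A/J)$ non-basic. Your observation that $\sR_A=\overline{\widehat\sR_A\setminus\bigcup_{J'\in{\rm Hyp}(A)}\rL_{J'}}$ is also sound. But the parenthetical claim that a generic $p\in\rL_J$ with $\supp(p)=J$ is ``not contained in any hyperplane stratum, since those strata correspond to strictly larger flats'' has the containment backwards. For flats $J\subseteq J'$ we have $\rL_J=\rL\cap\PP^J\subseteq\rL\cap\PP^{J'}=\rL_{J'}$, and since the lattice of flats is geometric, every rank-$(d-2)$ flat $J$ is covered by some hyperplane flat $J'\in{\rm Hyp}(A)$. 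So every point of $\rL_J$ --- generic or not --- does lie in the component $\rL_{J'}$ of $\widehat\sR_A$. Your argument therefore establishes only that $p$ is in $\widehat\sR_A$ via a hyperplane component; the cycle decomposition \eqref{eq:bigint2} gives no information about whether the component $\sR_A$ also passes through $p$, which is what you need.

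This is precisely the obstruction that forces the paper into its Puiseux-series construction. To show $p\in\sR_A=\overline{\sR_A\cap T}$, one must exhibit a one-parameter family of points in $\sR_A\cap T$ converging to $p$. The paper parametrizes a slice of $\rL$ through $p$ by $(p,(v+\z B)^{-1})$ where $B$ realizes $M(A/J)$, computes the leading form of the restriction of $g_A$ along this slice (after clearing denominators) as $g_{\hat A}(p)\cdot g_B((\z B)^{-1})\cdot\prod_{i\in J^c}(\z B_i)^2$, uses non-basicness of $M(A/J)=M(B)$ to find a torus point annihilating this leading form, and lifts that root to a convergent Puiseux series solution of the full equation via Lemmas~\ref{lem:pui2} and~\ref{lem:pui}. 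Some such local degeneration argument seems unavoidable: the $(d-2)$-dimensional cycle structure of $\widehat\sR_A$ is too coarse to detect which of its components pass through the $(d-3)$-dimensional locus $\rL_J$.
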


This theorem implies that the saturation in the formula \eqref{eq:RA} 
 for the ramification ideal $J_A$ was unnecessary. Before presenting
 the proof, we shall derive two corollaries and discuss one example.

\begin{corollary} 
\label{cor:almostlast}
The Zariski closure of  $(\sR_A)_\R$ is pure
of codimension $2$ in $\mathcal{L}^{-1}$. 
\end{corollary}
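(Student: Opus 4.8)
The plan is to identify the Zariski closure $\overline{(\sR_A)_\R}$ explicitly with the singular locus $\sing(\rL)$, and then invoke the purity already established in Corollary~\ref{thm:singLA}. The starting point is Theorem~\ref{tm:main}, which gives the set equality $(\sR_A)_\R = \sing(\rL)_\R$; this is exactly where the nontrivial inclusion $\sing(\rL) \subseteq \sR_A$ enters. By Corollary~\ref{thm:singLA}, $\sing(\rL) = \bigcup_J \rL_J$, a finite union over the flats $J$ for which $M(A/J)$ is non-basic, and this union is pure of codimension $2$ in $\rL = \mathcal{L}^{-1}$. Since Zariski closure commutes with finite unions, it will be enough to show that each component $\rL_J$ occurring here is the Zariski closure of its set of real points.

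To see this I would use that $\rL_J$ is itself the reciprocal plane $\mathcal{L}_{A_J}^{-1}$ of the row space of the real matrix $A_J$, as recorded in Section~\ref{sec:reciprocalplanes}: it is the Zariski closure of the coordinate-wise inverse of $\L_J \cap T^J$, where $\L_J = \rowspan(A_J)$ and $T^J$ is the dense torus in $\PP^J$. We may assume $d \geq 3$, since otherwise $\sing(\rL) = \emptyset$ and there is nothing to prove; then the strata appearing in Corollary~\ref{thm:singLA} have dimension $d-3$, so $\rk(A_J) = d-2 \geq 1$ and $\L_J$ is a nonzero linear space defined over $\R$. Hence the real points of $\L_J \cap T^J$ form a nonempty Euclidean-open subset of $\rowspan_\R(A_J)$, and are therefore Zariski dense in $\L_J$; since the inversion map is defined over $\R$, its image contains a Zariski-dense set of real points of $\rL_J$. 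Applying Corollary~\ref{thm:singLA} to $A_J$ itself shows moreover that every point of $\rL_J \cap T^J$ is smooth on $\rL_J$, so $\rL_J$ carries a smooth real point and $(\rL_J)_\R$ is Zariski dense in the irreducible variety $\rL_J$.

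Combining these observations gives
\[
\overline{(\sR_A)_\R} \;=\; \overline{\sing(\rL)_\R} \;=\; \bigcup_J \overline{(\rL_J)_\R} \;=\; \bigcup_J \rL_J \;=\; \sing(\rL),
\]
the unions running over the flats $J$ with $M(A/J)$ non-basic, and the right-hand side is pure of codimension $2$ in $\mathcal{L}^{-1}$ by Corollary~\ref{thm:singLA}. The only step that is not essentially formal is the first equality, which rests on Theorem~\ref{tm:main}; everything afterwards is bookkeeping with the stratification of reciprocal planes, so I do not anticipate a genuine obstacle beyond correctly citing Theorem~\ref{tm:main} and Corollary~\ref{thm:singLA}.
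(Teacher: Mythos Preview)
Your proposal is correct and follows essentially the same route as the paper: invoke Theorem~\ref{tm:main} to identify $(\sR_A)_\R$ with $\sing(\rL)_\R$, then use Corollary~\ref{thm:singLA} to conclude purity of codimension~$2$. You are in fact more careful than the paper's own proof, which simply asserts that the components $\rL_J$ are ``linear spaces defined over $\R$'' (they are of course reciprocal planes, not linear), whereas you spell out why each $\rL_J$, being an irreducible variety defined over $\R$ with smooth real points, has Zariski-dense real locus.
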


\begin{proof}
Theorem \ref{tm:main} implies that the Zariski closure of
the set $(\sR_A)_\R$
of real ramification points equals the singular locus 
${\rm Sing}(\mathcal{L}^{-1})$  of the reciprocal plane $\mathcal{L}^{-1}$.
Corollary~\ref{thm:singLA} represents
${\rm Sing}(\mathcal{L}^{-1})$ as a union of linear spaces
all of which are defined over $\R$ and have codimension $2$ in $\mathcal{L}^{-1}$.
\end{proof}

We now obtain the following characterization of
the real locus of the entropic discriminant.

\begin{corollary}
\label{cor:reallylast}
The Zariski closure of the set of real points in the hypersurface
defined by the entropic discriminant $H_A(\b)$
is pure of codimension $2$ in $\PP^{d-1}$.
Its irreducible components are the  linear spaces
$\,{\rm span}(A_j : j \in J)$, where
$J$ runs over all non-basic corank $2$ flats of $M(A)$.
\end{corollary}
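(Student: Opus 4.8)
The plan is to combine Theorem~\ref{tm:main} and Corollary~\ref{thm:singLA} with the reality of the system~(\ref{eq:opti2}). First I record a geometric preliminary: since the center $\PP(\ker A)$ of the linear projection $A\colon\PP^{n-1}\dashrightarrow\PP^{d-1}$ is disjoint from $\rL$ (Lemma~\ref{lem:nobase}), the restriction $A|_{\rL}\colon\rL\to\PP^{d-1}$ is a \emph{finite} morphism, by the standard fact that the restriction of a linear projection to a closed subvariety disjoint from its center is finite. Hence $A(\sR_A)$ is closed and $A$ contracts no component of the pure $(d-2)$-dimensional scheme $\sR_A$, so by Definition~\ref{def:ED} the reduced hypersurface $V_\C(H_A)$ is exactly the set-theoretic image $A(\sR_A)$. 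For the same reason, for any flat $J$ the image $A(\rL_J)$ is a closed irreducible subvariety of dimension $\dim\rL_J=\rk(A_J)-1$ of the linear space $L_J:=\mathrm{span}(A_j\colon j\in J)$, which itself has dimension $\rk(A_J)-1$; therefore $A(\rL_J)=L_J$.

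Next I would describe $A(\sing(\rL))$. By Corollary~\ref{thm:singLA}, $\sing(\rL)=\bigcup\{\rL_J\colon M(A/J)\text{ non-basic}\}$ is pure of codimension~$2$ (see also Corollary~\ref{cor:almostlast}), so its irreducible components are the $\rL_J$ with $M(A/J)$ non-basic and $\rk(A_J)=d-2$, i.e.\ $J$ of corank~$2$; moreover, by the rank-reduction argument at the end of the proof of Corollary~\ref{thm:singLA}, any flat with non-basic contraction is contained in a corank-$2$ flat with non-basic contraction, and $J\subseteq J'$ gives $L_J\subseteq L_{J'}$. Applying $A$ and using $A(\rL_J)=L_J$ yields
\[
   A(\sing(\rL)) \;=\; \bigcup_{J}L_J ,
\]
the union over all corank-$2$ flats $J$ with $M(A/J)$ non-basic (the ``non-basic corank $2$ flats'' of the statement). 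Since a flat $J$ is recovered from $L_J$ as $\{i\colon A_i\in L_J\}$, distinct such flats give distinct linear subspaces $L_J$ of dimension $d-3$, which, being linear of equal dimension, are pairwise incomparable; these will be the asserted components.

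The inclusion $\bigcup_J L_J\subseteq\overline{V_\R(H_A)}$ is then immediate: $\sing(\rL)\subseteq\sR_A$ by Theorem~\ref{tm:main}, so $L_J=A(\rL_J)\subseteq A(\sR_A)=V_\C(H_A)$, and as $A$ is real the plane $L_J$ is defined over $\R$, so its real points form a Zariski-dense subset of $L_J$ lying in $V_\R(H_A)$. For the reverse inclusion I would invoke \cite{Sot, varchenko}: for real $\b$, all solutions of~(\ref{eq:opti2}) are real; applying this to each restriction $A_J$ and using the Proudfoot--Speyer stratification of $\rL$ by flats, one obtains by induction on $M(A)$ that \emph{every} point of $\rL$ in a fibre $A^{-1}(\b_0)$ over a real $\b_0$ is real. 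Now if $\b_0$ is real with $H_A(\b_0)=0$, then $\b_0\in A(\sR_A)$, so there is $p\in\sR_A$ with $Ap=\b_0$; since $\b_0$ is real, $p$ is real, hence $p\in(\sR_A)_\R\subseteq\sing(\rL)$ by Lemma~\ref{prop:ramsing}, and so $\b_0\in A(\sing(\rL))=\bigcup_J L_J$. This gives $V_\R(H_A)\subseteq\bigcup_J L_J$; combining it with the previous inclusion produces the exact set equality $V_\R(H_A)=\bigcup_J\mathrm{span}_\R(A_j\colon j\in J)$, and taking Zariski closure yields the corollary. (If the reality of~(\ref{eq:opti2}) is only available for generic real $\b$, one runs this argument on a Zariski-dense set of real points of $\overline{V_\R(H_A)}$ and passes to the closure; the nonnegativity of $H_A$ from Corollary~\ref{cor:nonnegative} is a consistency check but is not otherwise needed.)

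I expect the main obstacle to be the second half: making the reality input from \cite{Sot, varchenko} precise enough to control \emph{boundary} preimages $p$ with $\supp(p)\subsetneq[n]$, i.e.\ verifying that the induction over the restrictions $M(A_J)$ (rather than only the torus part $\L\cap(\C^*)^n$) really closes, so that Lemma~\ref{prop:ramsing} can be applied to $p$. The remaining ingredients—finiteness of $A|_\rL$, the identity $A(\rL_J)=L_J$, and the combinatorial claim that non-basic contractions can be enlarged to corank-$2$ flats—are routine or already contained in the proofs of Lemma~\ref{lem:nobase} and Corollary~\ref{thm:singLA}.
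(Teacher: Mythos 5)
Your proof is correct and follows essentially the same route as the paper's: both combine Theorem~\ref{tm:main}, Corollary~\ref{thm:singLA}, Lemma~\ref{prop:ramsing}, and the reality of the system~(\ref{eq:opti2}) to identify the closure of $V_\R(H_A)$ with the image of $\sing(\rL)$ under the finite map $A\colon\rL\to\PP^{d-1}$. The paper's own proof is only three sentences long, and you have simply supplied the supporting logic — the finiteness of $A|_{\rL}$ via Lemma~\ref{lem:nobase}, the identity $A(\rL_J)=\mathrm{span}(A_j:j\in J)$, and especially the key (and in the paper implicit) fact that every real zero of $H_A(\b)$ admits a real preimage in $\sR_A$, which is where the reality result of Varchenko/Sottile enters.
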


\begin{proof}
The real variety of $H_A$ is the image of the real points in $\sR_A$
under the $\mu(A)$-to-one morphism 
$A : \mathcal{L}^{-1} \rightarrow \PP^{d-1}$. Hence the real variety
of $H_A$ is pure of codimension $2$ in $\PP^{d-1}$ as well. The description
of its irreducible components now follows from that given in
Corollary~\ref{thm:singLA}.
\end{proof}

We now revisit our very first example to illustrate the previous corollary.

\begin{example} 
    For $d=3$, the codimension-2 strata of $\L^{-1}$ are the $n$ coordinate 
points $e_i$ in $\PP^{n-1}$.  Their images under the map $A$ are the columns 
$A_1, \hdots, A_n$. For generic $A$, the points $e_1,\ldots,e_n$ comprise $\sing(\L^{-1})$.
Lemma \ref{prop:ramsing} implies that
$V_\R(H_A) $ is contained in $\{A_1,\ldots,A_n\}$,
and Theorem~\ref{tm:main} reveals that equality holds.
For special $3 \times n$-matrices $A$, the matroid $M(A/i)$ may be basic for some $i$. 
If this happens then $e_i$ is
a non-singular point in $\L^{-1}$ and
its image $A_i$ does not 
belong to $V_{\R}(H_A)$. 
Looking back at Example~\ref{ex:dreifunf}, 
we notice that the matroid $M(A/i)$ is basic for $i=1$ and 
it is non-basic for $i=2,3,4,5$.
This explains our finding in (\ref{eq:finding}) that the real variety $V_{\R}(H_A)$ consists of 
precisely the four points 
$A_2,A_3, A_4 $ and $ A_5$ in the projective plane $\PP^2$. \hfill $\diamond$
\end{example}

We are now ready to present the proof of our main result in this section.

\begin{proof}[Proof of Theorem \ref{tm:main}]
We first note that the identity (\ref{eq:sRsing2}) follows immediately from
Lemma \ref{prop:ramsing} and the inclusion $\, \sR_A \, \supseteq \, \sing(\rL)\,$
 in the first assertion. Hence it suffices to prove that inclusion.

By Corollary~\ref{thm:singLA}, the singular locus of $\rL$ is a
reducible variety whose irreducible components are the boundary strata 
$\rL_J$ where $M(A/J)$ is a non-basic matroid of rank 2.  We consider one such
 component $\rL_J$, regarded as a subvariety
 of $\C^J \times \{0\}$ inside of  $\C^n = \C^J \times \C^{J^c}$.
  A generic point of $\rL_J$ has the form $(p,0) $ where $p\in  (\C^*)^J$. 
  Our goal is to show that this point lies in the ramification locus $\sR_{A}$
  by producing a sequence of points in $\sR_A$ that converges to $(p,0)$.

We may assume that $J = \{1,\ldots,k\}$ is a
flat of rank $d-2$ and our matrix $A$ has the block form
\[
    A \ = \ \left(%
    \begin{array}{c|c}
        \hat{A} & * \\
        \hline
         0 & B \\
      \end{array}
      \right)
\]
where $\hat{A} \in \R^{(d-2) \times k}$ and $B \in \R^{2 \times (n-k)}$ are
both of full row-rank. In these coordinates, we get $M(A_J) = M(\hat{A})$ and
$M(A/J) = M(B)$. 

Now, let us return to our generic point $(p,0) \in \rL_J$. 
 The partial specialization $g_A(p, \x_{J^c})$
  is a polynomial in $ \C[x_{k+1},x_{k+2},\dots,x_n]$. It is non-homogeneous 
and its terms of lowest total degree come from those bases $I$ of
$M(A)$ for which $|I \cap J| =d-2$. For any such $I$, we have
\[
\det(A_I)  \ = \ \det(\hat{A}_{I\cap J}) \cdot \det(B_{I \cap J^c}).
\]
From this we see that the initial form of $g_A(p,\x_{J^c})$ of lowest degree terms can be written as
\begin{equation}\label{eq:ingA}
    \mathrm{in}_{-\1}( g_A(p,\x_{J^c})) \;\; = \;\;
    g_{\hat{A}}(p)\cdot g_{B}(\x_{J^c}). 
\end{equation}
From the results of Section~\ref{sec:ramification} we know that $\{g_{\hat{A}} = 0 \}
\cap \rL_J $ has codimension $1$ in $\rL_J$.
This implies  $g_{\hat{A}}(p) \not=0$ because the point $(p,0)$ was chosen to be generic in $\rL_J$.

In order to proceed, we need to represent the ramification locus around $p$ 
by a single polynomial, rather than as a subvariety of $\L^{-1}$. 
To do this, we rationally parametrize the points $\x_{J^{c}}$ 
for which $(p, {\bf x}_{J^{c}})$ lies in $\rL$ using the matrix $B$.  
First, note that the intersection of the linear space $\L$ with 
$\{p\}\times \C^{J^c}$ gives an affine linear space in $\C^{J^c}$
of the form $v + \rowspan(B)$ for some vector $v$ in $\C^{J^c}$.  We can parametrize
this space by $v+{\bf z}B$ where $\z = (z_1,z_2)$. This gives the rational parametrization 
$(p, (v+{\z}B)^{-1})$ of the intersection of $\rL$ with $\{p\}\times \C^{J^c}$.

Now we plug this parametrization into $g_A(p,{\bf x}_{J^c})$ and clear 
denominators to get a polynomial in $\C[z_1, z_2]$. Define $g(\z) \in \C[z_1, z_2]$ to be this polynomial,
\begin{equation}\label{eq:moregs}
    g(\z) \ \ = \   {g_A}(p,(v + \z B)^{-1})  \prod_{i\in J^c} (v_i + \z B_i)^2   \; 
    = \; \sum_{I\in \binom{[n]}{d}} \det(A_I)^2 
 \prod_{i\in I\cap J}p_i^2  \prod_{j \in J^c \backslash I} (v_j + \z B_j)^2.
\end{equation}
If $\z$ is a solution to $g({\bf z})=0$ for which each coordinate of $v+{\bf z}B$ is non-zero, 
then the point $(p, (v+{\bf z}B)^{-1})$ lies in the ramification locus $\sR_A$.

Since $J$ is a flat, the $n-k$ linear forms $\z B_i$ are non-zero 
for all indices $i$. This implies that $x_i = 1/(v_i+\z B_i)$ has degree $-1$. Thus the terms of highest degree
in $g(\z)$ correspond exactly to the terms of lowest degree in $g_A(p,\x_{J^c})$. From \eqref{eq:ingA}, we see 
that the leading form of $g(\z)$ is 
\[ 
{\rm in}_{\1}(g(\z)) \;\;\;=\;\;\;  
 g_{\hat{A}}(p)\cdot g_{B}( (\z B)^{-1})\cdot \prod_{i\in J^c} (\z B_i)^2.
\]
Our next step is to find a solution to 
the initial equation ${\rm in}_{\1}g(\z)=0$ and
to then extend it to the desired sequence of points in $\sR_A$.
As the matroid $M(A/J) = M(B)$ is non-basic, it follows from Corollary~\ref{cor:basic} that the
ramification $\sR_{A/J}$ is nonempty.  Hence there is a point $q \in \L_{A/J}\cap (\C^*)^{n-k}$ such that
$g_{A/J}(q^{-1})=g_{B}(q^{-1}) =0$. Let $z$ be the unique vector such that $z B = q$. 
We may assume that $B$ has the form $(\,\mathrm{Id}_{2}\,\, B^\prime\,)$. Thus implying that  
$z_i = q_i \not=0$ for $i = 1,2$.

By Lemma~\ref{lem:pui} below, we can extend this solution $z\in (\C^*)^2$ to a solution $Z = Z(\epsilon)$ of $g(\z)$, 
where the coordinates of $Z = (Z_1, Z_2)$ lie in the field $\C\{\!\{\epsilon\}\!\}$ of Puiseux series:
\[    Z_i\;\; =\;\;z_i\frac{1}{\epsilon} \;+\; \text{ higher order terms } \;\; \in \;\; \C\{\!\{\epsilon\}\!\}  
\;\;\;\;\;\; \text{ for }i=1,2.\]
Moreover, by Lemmas~\ref{lem:pui2} and \ref{lem:pui}, these series converge in a neighborhood of zero in $\R_{>0}$. 

Now consider the point $Q = Q(\epsilon) = v+ZB$ with coordinates
$Q_i  = q_i \frac{1}{\epsilon}  +  \cdots$ in  $\C\{\!\{\epsilon\}\!\}$.
We can invert $Q_i$ in the field of Puiseux series to get 
\[ Q_i^{-1} \;\;\; = \;\;\; q_i^{-1} \epsilon \;+\; \text{ higher order terms } \;\; \in \;\; \C\{\!\{\epsilon\}\!\}
\;\;\;\;\;\; \text{ for }i=1,\hdots,n-k,\]
and these series
 converge for real $\epsilon$ in an open segment $(0,\epsilon_0)$ near zero (see Lemma~\ref{lem:pui2} below). 

Then, by $\eqref{eq:moregs}$, the point $(p, Q^{-1})$ in $\rL \otimes_{\C} \C\{\!\{\epsilon\}\!\}$ is a zero of the
polynomial $g_A({\bf x})$.  Specializing to sufficiently small $\epsilon\in \R_{>0}$, gives a point 
$(p,Q(\epsilon)^{-1})\in (\C^*)^n$ that belongs to the
ramification locus $\sR_A$. Furthermore, as $\epsilon$ approaches $0$, the limit of the points
$(p, Q^{-1}(\epsilon))$ is $(p,0)$ in $\rL_J \times \{0\}$.
This shows $\rL_J \subseteq \sR_A$ and consequently
$\sing(\rL) \subseteq \sR_A$.
\end{proof}

Before Lemma~\ref{lem:pui}, we need a short lemma on the convergence of reciprocals of Puiseux series. 

\begin{lemma}\label{lem:pui2}
If $x(\epsilon)$ is a nonzero Puiseux series that converges for $\epsilon>0$ in a neighborhood of $0$, then its inverse $x(\epsilon)^{-1}$ in $\C\{\!\{\epsilon\}\!\}$ also converges for real $\epsilon$ in an
open segment $(0,\epsilon_0)$.
\end{lemma}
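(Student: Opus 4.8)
The plan is to reduce the statement to the elementary fact that the reciprocal of a convergent power series with nonzero constant term again has positive radius of convergence. Write $x(\epsilon) = \sum_{k \geq k_0} c_k \epsilon^{k/N}$ with $N \in \N$ positive, $k_0 \in \Z$, and $c_{k_0} \neq 0$. The substitution $t = \epsilon^{1/N}$, valid for $\epsilon > 0$, turns the convergence hypothesis into the statement that $u(t) := \sum_{j \geq 0} c_{k_0 + j}\, t^{j}$ has a positive radius of convergence $\rho$, with $u(0) = c_{k_0} \neq 0$; thus $x(\epsilon) = \epsilon^{k_0/N}\, u(\epsilon^{1/N})$.

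First I would record that $1/u$ is analytic near $0$: since $u$ is holomorphic on $\{|t| < \rho\}$ and $u(0) \neq 0$, continuity gives $\delta \in (0,\rho)$ with $u$ nonvanishing on $\{|t| < \delta\}$, so $1/u$ is holomorphic there and hence is given by a convergent power series $w(t) = \sum_{j \geq 0} w_j\, t^{j}$ of radius of convergence $\geq \delta$. This is the only non-bookkeeping ingredient; if one prefers to avoid complex analysis, the coefficients $w_j$ are determined recursively by $w_0 = c_{k_0}^{-1}$ and $\sum_{i + j = m} c_{k_0 + i}\, w_j = 0$ for $m \geq 1$, and a routine geometric majorant bounds them.

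Next I would set $y(\epsilon) := \epsilon^{-k_0/N}\, w(\epsilon^{1/N})$, which is a Puiseux series, and check that the formal product satisfies $x(\epsilon)\, y(\epsilon) = u(\epsilon^{1/N})\, w(\epsilon^{1/N}) = 1$ in $\C\{\!\{\epsilon\}\!\}$; by uniqueness of multiplicative inverses in this field, $y(\epsilon) = x(\epsilon)^{-1}$. Finally, for real $\epsilon$ with $0 < \epsilon < \epsilon_0 := \delta^{N}$ one has $\epsilon^{1/N} < \delta$, so $w(\epsilon^{1/N})$ converges and the prefactor $\epsilon^{-k_0/N}$ is a well-defined positive real; hence $x(\epsilon)^{-1}$ converges on the open segment $(0,\epsilon_0)$, as claimed.

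I do not anticipate any genuine obstacle here: the argument is essentially bookkeeping around the pole order $k_0/N$, the one substantive point being the convergence of the reciprocal series $w$, which I would either quote (holomorphy of $1/u$ near $0$) or derive from the majorant estimate indicated above.
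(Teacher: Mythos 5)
Your proposal is correct and follows essentially the same route as the paper's proof: substitute to clear the fractional exponents, factor out the leading monomial, invoke the standard fact that a convergent power series with nonzero constant term has a convergent multiplicative inverse (the paper cites Fischer \S 6.4; you offer either the holomorphy argument or a majorant estimate), and then undo the substitutions to recover a convergent Puiseux series inverse on an interval $(0,\epsilon_0)$.
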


\begin{proof}
Suppose $x(\epsilon) = u\epsilon^k+\text{ higher order terms}$. 
We can write the field of Puiseux series as the union of $\C((\epsilon^{1/m}))$ over $m\in \Z_+$.
 Thus for some $m \in \Z_+$, replacing $\epsilon$ with $\epsilon^m$ yields a Laurent series 
$x(\epsilon^m)$, which also converges in a neighborhood of 0. In particular, 
$\epsilon^{-mk}x(\epsilon^m)$ is a convergent power series with constant term 
$u$ and has an inverse $y(\epsilon)$ in the 
ring of convergent power series (see \cite[\S 6.4]{Fischer}).  
Then $y(\epsilon) = 1/u+ \cdots$ satisfies $\epsilon^{-mk}x(\epsilon^m)y(\epsilon) = 1$. 
Replacing $\epsilon$ with $\epsilon^{1/m}$, we see that 
$\epsilon^{-k}y(\epsilon^{1/m})$ is an inverse for $x(\epsilon)$. Furthermore, since $y(\epsilon)$ 
and $y(\epsilon^{1/m})$ converge in a neighborhood of zero,
 $x(\epsilon)^{-1} = \epsilon^{-k}y(\epsilon^{1/m})$ also
converges for $\epsilon > 0$ in a neighborhood of zero.  
\end{proof}

Now all that remains is to lift roots of initial forms to solutions over $\C\{\!\{\epsilon\}\!\}$.

\begin{lemma}\label{lem:pui}
Let $g(z_1,z_2)$ be a polynomial with complex coefficients 
and initial form $ {\rm in}_{\bf 1}(g)$, consisting of the highest terms with respect to
 total degree. Let $u = (u_1,u_2) \in (\mathbb{C}^*)^2$ be
 any solution to the equation $ {\rm in}_{\bf 1}(g)(u_1,u_2) = 0$. Then there exists a vector
$v(\epsilon) $ that satisfies $\,g(v(\epsilon)) \,=\, 0\,$ and
 whose coordinates are
Puiseux series of the form
$$ v_i(\epsilon) \,\, = \,\, u_i \frac{1}{\epsilon} \,+\, \hbox{higher order terms in $\epsilon$,}
\quad \qquad \hbox{for} \quad  i=1,2,$$
that converge for $\epsilon$ in some neighborhood $(0,\epsilon_0)$ of zero.
\end{lemma}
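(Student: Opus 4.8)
The plan is to reduce the two-variable problem to a single Newton--Puiseux expansion of a plane curve germ. Write $g = g_0 + g_1 + \cdots + g_d$ for the decomposition into homogeneous components of degrees $0,1,\dots,d := \deg g$, so that $\mathrm{in}_{\1}(g) = g_d$. The hypothesis is $g_d(u_1,u_2) = 0$ with $u_1,u_2 \ne 0$; by homogeneity this says exactly that the point at infinity in the direction $(u_1{:}u_2)$ lies on the projective closure of the affine curve $\{g = 0\}$, and the asymptotic solution we seek is precisely a branch of this curve approaching that point at infinity.

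To expose that branch I substitute $z_1 = w_1/w_0$, $z_2 = 1/w_0$ (a valid chart since $u_2 \ne 0$) and clear denominators, obtaining the polynomial
$$
  h(w_0,w_1) \ := \ w_0^{\,d}\, g\!\left(\tfrac{w_1}{w_0},\, \tfrac{1}{w_0}\right)
  \ = \ \sum_{k=0}^{d} w_0^{\,d-k}\, g_k(w_1,1) \ \in \ \C[w_0,w_1].
$$
Here $h(0,w_1) = g_d(w_1,1)$, which is a nonzero polynomial in $w_1$ (otherwise $g_d(z_1,z_2) = z_2^{d}\,g_d(z_1/z_2,1)$ would vanish identically), while $h(0,\,u_1/u_2) = u_2^{-d}\,g_d(u_1,u_2) = 0$. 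Thus the plane curve $\{h = 0\}$ passes through $(0,\,u_1/u_2)$ and $h(0,w_1)$ vanishes only to some finite order $e \ge 1$ at $w_1 = u_1/u_2$.

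Next I apply the Newton--Puiseux theorem at the point $(0,\,u_1/u_2)$. After translating $w_1 \mapsto u_1/u_2 + w_1$, the germ $h$ is $w_1$-regular of order $e$, so by the Weierstrass preparation theorem $h = (\text{unit}) \cdot W(w_0,w_1)$ with $W$ a Weierstrass polynomial monic of degree $e$ in $w_1$ and $W(0,w_1) = w_1^{e}$; over the algebraically closed field of convergent Puiseux series $W$ factors into integral linear factors $\prod_{i=1}^{e}(w_1 - \psi_i(w_0))$ with $\psi_i(0) = 0$. Picking one factor and translating back yields a convergent Puiseux series $\psi(w_0)$ with $\psi(0) = u_1/u_2$, $\mathrm{ord}(\psi - u_1/u_2) > 0$, $h(w_0,\psi(w_0)) \equiv 0$, convergent for $|w_0|$ small. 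Finally I reparametrize by $w_0 = \epsilon/u_2$ and set $v_2(\epsilon) = 1/w_0 = u_2/\epsilon$ and $v_1(\epsilon) = \psi(w_0)/w_0 = \psi(\epsilon/u_2)\cdot u_2/\epsilon$. Since $\psi(\epsilon/u_2) = u_1/u_2 + (\text{terms of positive order in }\epsilon)$, multiplying by $u_2/\epsilon$ gives $v_1(\epsilon) = u_1/\epsilon + (\text{terms of order} > -1)$ and $v_2(\epsilon) = u_2/\epsilon$, so both have the asserted shape; they converge for real $\epsilon$ in some $(0,\epsilon_0)$ because $\psi$ converges near $0$ and multiplication by $\epsilon^{-1}$ preserves convergence on a punctured interval; and $g(v_1(\epsilon),v_2(\epsilon)) = w_0^{-d}\,h(w_0,\psi(w_0)) = 0$.

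The homogeneity bookkeeping and the convergence statement are routine (the latter is the classical convergence of Puiseux branches of plane curve germs). The only point requiring care --- the main obstacle --- is ensuring the Newton--Puiseux expansion can be taken with the prescribed \emph{finite} initial value $u_1/u_2$, rather than a branch that runs off to infinity; this is exactly why I pass through Weierstrass preparation, which produces a polynomial monic in $w_1$ and hence a genuinely integral branch $\psi$ with $\psi(0) = u_1/u_2$. No hypothesis beyond $u \in (\C^*)^2$ is used.
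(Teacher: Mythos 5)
Your proof is correct, and it takes a route that is related to but technically distinct from the paper's. You dehomogenize via the projective chart $z_1 = w_1/w_0$, $z_2 = 1/w_0$, form $h(w_0,w_1) = w_0^d\,g(w_1/w_0,1/w_0)$, and then invoke Weierstrass preparation at the finite point $(0,\,u_1/u_2)$ to obtain a distinguished polynomial whose Newton--Puiseux factorization yields a convergent integral branch $\psi$ with the prescribed initial value $\psi(0)=u_1/u_2$. The paper instead inverts \emph{both} variables, forming $\overline{g}(z_1,z_2) = z_1^{d}z_2^{d}\,g(1/z_1,1/z_2)$, observes that the Newton polygon of $\overline{g}$ has an edge of slope $-1$ whose edge form has $(1/u_1,1/u_2)$ as a root, reads off a branch $z_2(\epsilon)$ directly from the classical Newton--Puiseux algorithm with $z_1 = \epsilon/u_1$, and performs the final inversion to $v$ via Lemma~\ref{lem:pui2}. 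Both arguments bottom out in the same classical convergence fact for plane-curve branches. Your Weierstrass-preparation step is a tidy way to guarantee a genuinely integral branch with the correct finite initial value --- the point you rightly flag as the main concern --- whereas the paper's symmetric double inversion makes the relevant slope-$(-1)$ edge of the Newton polygon visible at a glance; either is a valid proof.
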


\begin{proof}
We invert the variables $z_i$ and work with the polynomial
\[\overline{g}(z) \;\;=\;\; z_1^{{\rm deg}(g)}\cdot z_2^{{\rm deg}(g)} \cdot g(z_1^{-1}, z_2^{-1} ). \]
The highest-degree terms of $g$ then correspond to the lowest-degree terms of $\ol{g}$. 
Furthermore, the point $u^{-1} = (1/u_1, 1/u_2)$ is a solution of $ {\rm in}_{\bf -1}(\ol{g})$.

Our hypothesis states that the Newton polygon of $\ol{g}(z_1,z_2)$ has an edge
of slope $-1$, and $(1/u_1,1/u_2)$ is a root of the corresponding
binary form ${\rm in}_{\bf -1}(\ol{g})(z_1,z_2)$. Using the classical Newton-Puiseux algorithm,
we can construct a power series expansion of $z_2$ in terms
of $z_1 = \frac{1}{u_1} \epsilon$, having the form $z_2 = \frac{1}{u_2}\epsilon + \cdots $.
The resulting series in $\epsilon$ converges by the arguments in
\cite[\S 7.11]{Fischer}.  

This solution has an inverse in the field of Puiseux series, and this inverse will be 
our desired solution $(v_1(\epsilon), v_2(\epsilon))$ of $g(z_1,z_2) = 0$. 
Namely, if $w(\epsilon) = (w_1(\epsilon), w_2(\epsilon)) \in \C\{\!\{ \epsilon \}\!\}^2$ is the solution
to $\ol{g}(z_1, z_2)$ found in the paragraph above, then $v_i(\epsilon) = w_i(\epsilon)^{-1}$
is a solution to $g(z_1, z_2)$. By Lemma~\ref{lem:pui2}, the Puiseux series $v_i(\epsilon)$ converge 
in a neighborhood $(0,\epsilon_0)$ of the origin in $\R_{> 0}$.
\end{proof}

\smallskip

This concludes our study of the entropic discriminant.
In spite of the progress that has been achieved,
there are still many unresolved problems concerning $H_A(\b)$.
We list five open questions:

\medskip

\noindent {\bf Open Questions:}
\begin{enumerate}
\item {\em Is the entropic discriminant $H_A({\bf b})$ always a sum of squares?} \\
We know that the answer is yes for $n=d+1$ and for $d=2$,
but even the case $d=3$ of plane curves is open. It would be especially nice to write $H_{A}(\b)$ as sum of squares in the maximal minors of the matrix $(A,\b)$, as we did in \eqref{eq:plucker} and \eqref{eq:pluck2}
for  $(d,n)=(2,3),\,(2,4)$.\smallskip
\item {\em What is the Newton polytope of the entropic discriminant $H_A({\bf b})$?} \\
For instance, when $A$ is the matrix in (\ref{eq:specialmatrix})
then the table (\ref{eq:somedata})  suggests that
the Newton polytope of $H_A(\b)$ is 
the standard permutohedron, scaled by a factor of two. \smallskip
\item {\em Find ideal generators for the ramification scheme.} \\
Here is a 
 concrete conjecture about minimal generators of the ideal $J_A$
 in (\ref{eq:computeRA}).
 Fix  $n \geq d+2$ and a $d \times n$-matrix $A$ whose matroid is uniform.
We know from Example \ref{ex:uniformreciprocal} that $I(\mathcal{L}^{-1})$ is minimally
generated by $\binom{n-1}{d}$ polynomials of degree $d$. We
conjecture that $J_A$ has precisely $\binom{d+1}{2}$ additional minimal generators
of degree $2d-2$, namely, the restrictions to $\mathcal{L}^{-1}$ of the
    rational functions  $g_A(\x)/x_i x_j$ for some $i,j \in [n]$.
We can show that these rational functions are polynomials
on  $\mathcal{L}^{-1}$ and that they vanish on ${\rm Sing}(\mathcal{L}^{-1}) $.
Do they generate our ideal?
\item {\em How is the entropic discriminant related to  the Gauss curve of the central curve?} \\
The degree formula for the Gauss curve in \cite[\S 5]{DSV}
is essentially the same as the degree formula we derived for $H_A(\b)$.
What is the most natural geometric explanation for this?
\item {\em How does the entropic discriminant depend on the                     
choice of monomial to be maximized?} \\
In light of Varchenko's work \cite{varchenko}, it is natural to replace
$x_1 x_2 \cdots x_n$ in (\ref{eq:opti1}) by a monomial
$\mathbf{x}^\mathbf{u} = x_1^{u_1} x_2^{u_2} \cdots x_n^{u_n}$
with indeterminate exponents. This would lead to a refined
discriminant that is a bihomogeneous polynomial in
$(\mathbf{b},\mathbf{u})$. What is the bidegree of that polynomial?
\end{enumerate}

\bigskip \bigskip

\noindent
{\bf Acknowledgments.} 
We thank Anders Bj\"orner, Igor Dolgachev, Chris Hillar, Daniel Plaumann, Frank Sottile and David Speyer 
for helpful discussions. 
We are especially grateful to David Speyer for helping us to
a proof of Theorem \ref{tm:main}.
Raman Sanyal was supported by a Miller Postdoctoral Research Fellowship
at UC Berkeley. Bernd Sturmfels and Cynthia Vinzant were partially supported by the
U.S.~National Science Foundation (DMS-0757207 and DMS-0968882). Bernd Sturmfels also 
thanks the Mittag-Leffler-Institute
and MATHEON Berlin for their hospitality during this project.

\bigskip

%

\end{document}